\documentclass[a4paper, 11pt]{amsart}
\usepackage{amsmath, amssymb, amsthm, bm}	
\usepackage{eucal}
\usepackage{braket, stmaryrd,mathrsfs}
\usepackage{cite}
\usepackage[dvipdfmx]{graphicx,xcolor}

\setlength{\textwidth}{\paperwidth}
\addtolength{\textwidth}{-2.5in}
\calclayout

\newcommand{\pr}[1]{#1^{\prime}}

\newcommand{\del}{\partial}



\newcommand{\mfrak}[1]{\mathfrak{#1}}
\newcommand{\mcal}[1]{\mathcal{#1}}
\newcommand{\mbb}[1]{\mathbb{#1}}
\newcommand{\mrm}[1]{\mathrm{#1}}
\newcommand{\msf}[1]{\mathsf{#1}}

\newcommand{\what}[1]{\widehat{#1}}
\newcommand{\no}[1]{:\hspace{-3pt} #1\hspace{-3pt}:\hspace{3pt}}

\makeatletter
\newcommand{\xRightarrow}[2][]{\ext@arrow 0359\Rightarrowfill@{#1}{#2}}
\makeatother

\theoremstyle{plain}
\newtheorem{thm}{Theorem}[section]
\newtheorem{lem}[thm]{Lemma}
\newtheorem{prop}[thm]{Proposition}
\newtheorem{cor}[thm]{Corollary}
\theoremstyle{definition}
\newtheorem{defn}[thm]{Definition}
\theoremstyle{remark}
\newtheorem{rem}[thm]{Remark}
\newtheorem{exam}[thm]{Example}



\makeatletter
    
    \@addtoreset{equation}{section}
\makeatother



\title[Multiple backward SLE and GFF]{Multiple backward Schramm--Loewner evolution and coupling with Gaussian free field}
\author{Shinji Koshida}
\address{Department of Physics, Faculty of Science and Engineering, Chuo University, Kasuga, Bunkyo, Tokyo 112-8551, Japan}
\email{koshida@phys.chuo-u.ac.jp}

\begin{document}

\begin{abstract}
It is known that a backward Schramm--Loewner evolution (SLE) is coupled with a free boundary Gaussian free field (GFF)
with boundary perturbation to give conformal welding of quantum surfaces.
Motivated by a generalization of conformal welding for quantum surfaces with multiple marked boundary points, we propose a notion of multiple backward SLE.
To this aim, we investigate the commutation relation between two backward Loewner chains,
and consequently, we find that the driving process of each backward Loewner chain has to have a drift term given by
logarithmic derivative of a partition function, which is determined by a system of Belavin--Polyakov--Zamolodchikov-like equations
so that these Loewner chains are commutative.
After this observation, we define a multiple backward SLE as a tuple of mutually commutative backward Loewner chains.
It immediately follows that each backward Loewner chain in a multiple backward SLE is obtained as a Girsanov transform of a backward SLE.
We also discuss coupling of a multiple backward SLE with a GFF with boundary perturbation 
and find that a partition function and a boundary perturbation are uniquely determined
so that they are coupled with each other.
\end{abstract}

\subjclass[2020]{60D05, 60J67, 28C20}
\keywords{Schramm--Loewner evolution (SLE), Multiple backward SLE, SLE partition function, Gaussian free field, Liouville quantum gravity, Imaginary geometry}

\maketitle


\section{Introduction}
Recent studies on Schramm--Loewner evolution (SLE) coupled with two-dimensional Gaussian free field (GFF) 
\cite{Dubedat2009,SchrammSheffield2009,SchrammSheffield2013,IzyurovKytola2013, DuplantierMillerSheffield2014,Sheffield2016,MillerSheffield2016a,MillerSheffield2016b,MillerSheffield2016c, MillerSheffield2017} have created a new trend in random geometry
leading to a canonical construction of SLE from GFF and an insight into underlying geometry of GFF.
In these studies, a GFF is an ingredient of random objects such as  a {\it quantum surface} \cite{DuplantierMillerSheffield2014, Sheffield2016} or an {\it imaginary surface} \cite{MillerSheffield2016a,MillerSheffield2016b,MillerSheffield2016c, MillerSheffield2017}, roughly,
the former (resp. the latter) of which is an equivalence class of two-dimensional simply connected domains equipped with random metrics
(resp. random vector fields).
Given a quantum surface uniformized to the complex upper half plane, then, one can think of matching boundary segments lying on both sides of the origin
so that they have the same length with respect to the random metric and gluing them together.
Consequently, one obtains a random curve growing in the complex upper half plane and could consider the conformal welding problem
that requires us to determine its probability law.
In the case that an imaginary surface uniformized to the complex upper half plane is given,
one sees a flow line starting at the origin along the random vector field
and could consider the flow line problem that requires us to determine its probability law.
It has been proved \cite{Sheffield2016,MillerSheffield2016a,MillerSheffield2016b,MillerSheffield2016c, MillerSheffield2017} that, for a quantum surface and an imaginary surface with proper boundary perturbations, both problems are solved by SLE
relying on the coupling of SLE with GFF.

Due to the boundary perturbations, the quantum surfaces (resp. the imaginary surfaces) subject to the conformal welding problem (resp. the flow line problem)
can be regarded as being equipped with two marked boundary points (resp. boundary condition changing points) at the origin and infinity.
Therefore, it seems natural to consider analogues of these problems in the case when the quantum surfaces (resp. the imaginary surfaces)
are equipped with more marked boundary points (resp. boundary condition changing points) than two.
In the previous work \cite{KatoriKoshida2020a}, we posed such generalizations and found that they are solved by multiple SLE \cite{BauerBernardKytola2005,Dubedat2006,Dubedat2007,Graham2007, KytolaPeltola2016,PeltolaWu2019},
but we also encountered a new problem.

The couplings of SLE with GFF to solve the conformal welding problem and the flow line problem are slightly different.
While, in the case of the flow line problem, the coupling of the usual forward flow of SLE \cite{Schramm2000, RohdeSchramm2005}
and GFF under proper boundary condition is useful,
in the case of the conformal welding problem, one has to make a backward SLE coupled with a free boundary GFF with a proper boundary perturbation.
These differences not only persist when we move on to the case with multiple marked boundary points/boundary condition changing points,
but also get more serious.
It is known \cite{Lawler2009b} that a forward SLE and a backward SLE are roughly the inverse mapping of each other,
which is why a forward SLE and a backward SLE generate essentially the same random curve.
Note that the proof of this fact relies on the property that, for a Brownian motion $(B_{t}:t\ge 0)$ and a fixed time $T>0$,
the stochastic process $(B_{T-t}-B_{T}:t\in [0,T])$ is again a Brownian motion.
Therefore, for a multiple SLE, whose driving process has a drift term apart from a Brownian motion, the same thing cannot be expected.
Nevertheless, a multiple backward SLE naturally gives a solution to the conformal welding problem for a quantum surface
with multiple marked boundary points.
The new problem mentioned above and that we address in this paper is how a multiple backward SLE makes sense as a stochastic process
generating random curves.

Let us take a quick look at construction of a multiple SLE in forward case
based on the commutation relation between Loewner chains \cite{Dubedat2006,Dubedat2007,Graham2007}.
Suppose that we have two Loewner chains $\left(g_{t}(\cdot):t\ge 0\right)$ and $\left(\tilde{g}_{s}(\cdot):s\ge 0\right)$ driven by some It{\^o} processes.
Using these Loewner chains, one can think of two schemes of generating multiple curves:
One scheme is to generate a curve according to $\left(g_{t}(\cdot):t\ge 0\right)$
and next to generate the other curve in the remaining domain letting $\left(\tilde{g}_{s}(\cdot):s\ge 0\right)$ evolve,
and the other one is to do the same thing in the converted order.
In both schemes, one obtains two random curves in the complex upper half plane.
Then, the requirement that their probability laws are identical
imposes strict conditions on the driving processes of the Loewner chains.
In particular, it can be argued that they share a function that solves a system of Belavin--Polyakov--Zamolodchikov(BPZ)-like equations
so that their drift terms are given by its logarithmic derivatives.
What is called a multiple SLE these days \cite{KytolaPeltola2016,PeltolaWu2019} is a multiple of Loewner chains, the driving process of each of which
has a drift term given by a logarithmic derivative of a single function solving a system of BPZ equations.
Owing to the argument of the commutation relation, it is ensured that these Loewner chains
consistently generate multiple curves in the complex half plane.
It is also known \cite{Werner2004b,SchrammWilson2005,KytolaPeltola2016,PeltolaWu2019} that, for a multiple SLE, each Loewner chain is a Girsanov transform of a usual SLE up to some stopping time.

We also comment that a multiple SLE was also constructed in \cite{BauerBernardKytola2005},
where it was thought of as a Loewner chain generating multiple curves,
which can be regarded as a stochastic version of the multiple slit Loewner theory \cite{RothSchleissinger2017}
and was adopted in our previous works \cite{KatoriKoshida2020a,KatoriKoshida2020b}.
In \cite{BauerBernardKytola2005}, drift terms in driving processes were derived in connection to conformal field theory (CFT)
whose probability theoretical origin was later clarified in \cite{Graham2007}.

Our aim is to carry out an analogous discussion of commutation relation as above for the backward case.
As was expected, 
\begin{description}
\item[Rough statement of Thorem \ref{thm:commutation}]
{\it the commutation relation imposes conditions on the driving processes of the backward Loewner chains under consideration
so that the drift terms are given by logarithmic derivatives of a function that is a solution of a system of BPZ equations},
\end{description}
but parameters in the BPZ equations appear in a different way from the case of a multiple forward SLE.
To define a backward multiple SLE, we turn this argument upside down and start from a solution of a system of BPZ equations,
which we call a partition function.
Then a multiple backward SLE associated with that partition function is defined as a multiple of
backward Loewner chains, whose driving processes have drift terms determined by logarithmic derivatives of the partition function.
Similarly as in the case of a multiple forward SLE, these backward Loewner chains consistently generate multiple random curves.
It can be also seen that
\begin{description}
\item[Rough statement of Theorem \ref{thm:Girsanov_transform}]
{\it each backward Loewner chain is a Girsanov transform of a usual backward SLE
with the Radon--Nikod{\'y}m derivative being written in terms of the partition function.}
\end{description}
Therefore, a multiple backward SLE is equivalently defined as a multiple of probability measures
each of which is a suitable Girsanov transform of the law of an ordinary backward SLE.

After fixing a definition of a multiple backward SLE, we discuss coupling between a multiple backward SLE
and a free boundary GFF with boundary perturbation.
We begin with a precise definition of coupling in such a way that a multiple backward SLE coupled with a free boundary GFF with boundary perturbation
gives a solution to the associated conformal welding problem.
Then, we find that 
\begin{description}
\item[Rough statement of Theorem \ref{thm:coupling_constraint}]
{\it the requirement that a multiple backward SLE is coupled with a free boundary GFF with boundary perturbation
imposes constraints on both the multiple backward SLE and the boundary perturbation
that are strict enough to fix them essentially uniquely.}
\end{description}
We also prove an analogue of Theorem \ref{thm:coupling_constraint} for a multiple forward SLE in Theorem \ref{thm:coupling_forward}.

Let us make some comments on difference and relation between the current work and our previous work \cite{KatoriKoshida2020a}.
In the previous work, we considered a multiple backward SLE that generates multiple curves at once.
On the other hand, what we call a multiple backward SLE in the current work is a consistent family of backward Loewner chains
by which multiple curves are generated one by one.
In the previous work \cite{KatoriKoshida2020a}, we obtained a sufficient condition for a multiple backward SLE that generates multiple curves at once to be coupled with a free boundary GFF (see also \cite{KatoriKoshida2020b}). To be precise, a multiple backward SLE is coupled with a free boundary GFF if the system of driving processes is given by a Dyson model \cite{Dyson1962}. We did not, however, manage to prove the converse direction.
In the present work, we study a different multiple backward SLE, a family of backward Loewner chains, and find in Theorem \ref{thm:coupling_constraint} the necessary and sufficient conditions for the multiple backward SLE to be coupled with a free boundary GFF.
In a subsequent work of ours \cite{KatoriKoshida2020c}, we will prove the converse statement of that in \cite{KatoriKoshida2020a}, and the equivalence between \cite{KatoriKoshida2020a} and the present work as well.

An implication of Theorem \ref{thm:coupling_constraint} seems to be of great importance.
At first, we intended to design a boundary perturbation so that the associated conformal welding problem
is solved by a desired multiple backward SLE,
but, consequently, Theorem \ref{thm:coupling_constraint} prohibited us from carrying out that program except for one case.
Then, a new problem arises whether it is possible to construct other multiple backward SLE by considering a generalization of conformal welding problem
or whether the chosen multiple backward SLE is the only one that can be constructed starting from the theory of GFF.

Before closing this introduction, we briefly comment on future directions.
It would be interesting to consider other kinds of SLE such as a radial SLE, a quadrant SLE \cite{Takebe2014} and an SLE$(\kappa,\rho)$
to generalize Theorems \ref{thm:coupling_constraint} and \ref{thm:coupling_forward}.
We are in particular interested in cases of multiply connected domains
that are treated by means of an annulus SLE \cite{Zhan2004,ByunKangTak2018}
or a stochastic Komatu-Loewner evolution \cite{BauerFriedrich2008, ChenFukushima2018, Murayama2019}.

This paper is organized as follows.
In the next Sect. \ref{sect:commutation_relation}, after fixing our terminologies concerning backward Loewner chains,
we investigate commutation relation between two backward Loewner chains and prove Theorem \ref{thm:commutation}.
We also discuss the mutual commutativity among backward Loewner chains extending the result of Theorem \ref{thm:commutation},
following which, in Sect. \ref{sect:multiple_backward_SLE}, we define a multiple backward SLE as a special case of
a mutually commuting family of backward Loewner chains.
We also prove Theorem \ref{thm:Girsanov_transform} and pose an equivalent definition of a multiple backward SLE
as a multiple of probability measures,
with which we work in Sect. \ref{sect:coupling_GFF}.
In Sect. \ref{sect:coupling_GFF}, we consider coupling of a multiple backward SLE with a free boundary GFF with boundary perturbation.
To this aim, we begin with a review of free boundary GFF and then give a definition of coupling.
We will find that the coupling conditions impose strict constraints on both the multiple backward SLE and the boundary perturbation
to give Theorem \ref{thm:coupling_constraint}.
In this paper, we avoid an explicit use of CFT and carry our discussion in purely a probability theoretical manner.
For readers familiar with CFT, however, it might be more convenient to see CFT background underlying our discussion.
In Appendix \ref{app:CFT}, we summarize how observables that play significant roles in our discussion originate as correlation functions of CFT.
Though we focus on a multiple backward SLE in this paper, 
an analogue of Theorem \ref{thm:coupling_constraint} can also be considered for an ordinary multiple forward SLE.
In Appendix \ref{sect:forward_flow}, we discuss a multiple forward SLE coupled with a Dirichlet boundary GFF with boundary perturbation.
We recommend readers to read Appendix \ref{sect:forward_flow} separately from the main text because,
to avoid notational complexity, we use the same symbols as in the main text with different definitions.

\subsection*{Terminologies}
Let $\mbb{H}=\{z\in\mbb{C}|\mrm{Im}z>0\}$ be the complex upper half-plane
and let $\overline{\mbb{H}}$ be its closure in $\mbb{C}$.
A subset $K\subset \mbb{H}$ is called a compact $\mbb{H}$-hull if $K=\mbb{H}\cap \overline{K}$ and $\mbb{H}\backslash K$ is simply connected.
For a compact $\mbb{H}$-hull $K$, there exists a unique conformal transformation $g_{K}:\mbb{H}\backslash K\to\mbb{H}$ under the hydrodynamical normalization at infinity:
\begin{equation*}
	\lim_{z\to\infty}|g_{K}(z)-z|=0.
\end{equation*}
We define the half-plane capacity of $K$ at infinity by
\begin{equation*}
	\mrm{hcap}(K):=\lim_{z\to\infty}z(g_{K}(z)-z).
\end{equation*}

For $N\in\mbb{N}$, we set
\begin{equation*}
	\mrm{Conf}_{N}(\mbb{R}):=\left\{\bm{x}=(x_{1},\dots, x_{N})\in\mbb{R}^{N}|x_{i}\neq x_{j}\mbox{ if }i\neq j\right\}
\end{equation*}
as the collection of $N$-point configurations on $\mbb{R}$.
Note that this space is the union of $N!$ connected components, and each connected component is simply connected.

\subsection*{Acknowledgements}
The author is grateful to Yoshimichi Ueda and Takuya Murayama for stimulating his interest in the subject of the present paper,
and to Makoto Katori, Makoto Nakashima and Noriyoshi Sakuma for discussions and opportunities to talk in seminars they arranged.
He also thanks the anonymous referee for helping the author dramatically improve the manuscript with useful suggestions.
This work was supported by the Grant-in-Aid for JSPS Fellows (No.~19J01279).

\section{Commutation relation}
\label{sect:commutation_relation}
In this section, we investigate the commutation relation between two backward Loewner chains
and derive conditions so that they consistently generate two curves.
To this aim, we begin with fixing our terminologies concerning backward Loewner chains.

\begin{defn}
Let $U:[0,\infty)\to\mbb{R}$ be a continuous function.
The backward Loewner chain $\left(f_{t}(\cdot):t\ge 0 \right)$ driven by $U$ is the solution of the equation
\begin{equation*}
	\frac{d}{dt}f_{t}(z)=-\frac{2}{f_{t}(z)-U(t)},\quad t\ge 0,\quad f_{0}(z)=z \in \mbb{H}.
\end{equation*}
\end{defn}

For a backward Loewner chain $(f_{t}(\cdot):t\ge 0)$ driven by a continuous function $U$ and a fixed point $z\in \mbb{H}$,
the real-valued functions $x_{t}(z):=\mrm{Re}f_{t}(z)$, $y_{t}(z):=\mrm{Im}f_{t}(z)$, $t\geq 0$ satisfy the system of ordinary differential equations (ODEs)
\begin{align*}
	\frac{d}{dt}x_{t}(z)=-\frac{2(x_{t}(z)-U(t))}{(x_{t}(z)-U(t))^{2}+y_{t}(z)^{2}}, \quad
	\frac{d}{dt}y_{t}(z)=\frac{2y_{t}(z)}{(x_{t}(z)-U(t))^{2}+y_{t}(z)^{2}}, \quad t\geq 0,
\end{align*}
under the initial conditions $x_{0}(z)=\mrm{Re}z\in\mbb{R}$, $y_{0}(z)=\mrm{Im}z>0$.
This implies, due to the general theory of ODEs, that, at each $t\geq 0$, $f_{t}(\mbb{H})$ lies in $\mbb{H}$ and $K_{t}:=\mbb{H}\backslash f_{t}(\mbb{H})$ is a compact $\mbb{H}$-hull.
When we set $h_{t}:=f_{t}^{-1}$, $t\geq 0$, we can see that $(h_{t}(\cdot):t\geq 0)$ satisfies the partial differential equation
\begin{equation}
\label{eq:Loewner_Kufarev_backward}
	\frac{\del}{\del t}h_{t}(w)=\frac{2}{w-U(t)}\frac{\del}{\del w}h_{t}(w), \quad t\geq 0, \quad h_{0}(w)=w\in\mbb{H},
\end{equation}
and, for each $t\geq 0$, $h_{t}:\mbb{H}\backslash K_{t}\to\mbb{H}$ is a conformal transformation. Note that the domain of definition of $h_{t}$ depends on $t\geq 0$.
Expanding both sides of (\ref{eq:Loewner_Kufarev_backward}) around infinity, we can see that, for each $t\geq 0$, $h_{t}$ is hydrodynamically normalized and that $\mrm{hcap}(K_{t})=2t$, $t\geq 0$.

The definition of a backward Loewner chain obviously works even if a continuous function $U$
is replaced by a stochastic process as long as its paths are almost surely continuous.
A fundamental example is the backward SLE$(\kappa)$ defined as follows:
\begin{defn}
Let $\kappa>0$ be fixed.
A backward SLE$(\kappa)$ is the backward Loewner chain $(f_{t}(\cdot):t\ge 0)$ driven by $(W_{t}=\sqrt{\kappa}B_{t}:t\ge 0)$
where $(B_{t}:t\ge 0)$ is a standard Brownian motion.
\end{defn}
It has been known \cite{RohdeSchramm2005,Kang2007,Lind2008, Lawler2009b}
that a backward SLE is easier to analyze in many ways than a forward SLE.
More recent studies on backward SLE include \cite{RohdeZhan2016,MackeyZhan2019}.
A backward SLE$(\kappa)$ is roughly the {\it inverse mapping} of an SLE$(\kappa)$.
A proof of the following fact can be found e.g. in \cite{Lawler2009b}.
\begin{prop}
\label{prop:backward_SLE_gives_inverse_mapping}
Let $\kappa>0$ and let $(f_{t}(\cdot):t\ge 0)$ be a backward SLE$(\kappa)$ driven by $(W_{t}:t\ge 0)$.
Also let $(g_{t}(\cdot):t\ge 0)$ be an SLE($\kappa$), {\it i.e.}, it is the solution of
\begin{equation*}
	\frac{d}{dt}g_{t}(z)=\frac{2}{g_{t}(z)-\widetilde{W}_{t}},\quad t\ge 0,\quad g_{0}(z)=z\in\mbb{H},
\end{equation*}
where we put $\widetilde{W}_{t}=\sqrt{\kappa}\widetilde{B}_{t}$, $t\ge 0$ with $(\widetilde{B}_{t}:t\ge 0)$ being a standard Brownian motion.
We set $\hat{f}_{t}(z):=f_{t}(z)-W_{t}$, $t\ge 0$ and $\hat{g}_{t}(z):=g_{t}(z)-\widetilde{W}_{t}$, $t\ge 0$.
Then, at each $t>0$, we have
\begin{equation*}
	\hat{f}_{t}(\cdot)\overset{(\mrm{law})}{=}\hat{g}_{t}^{-1}(\cdot).
\end{equation*}
\end{prop}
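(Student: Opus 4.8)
The plan is to fix $T>0$, build out of the forward SLE$(\kappa)$ $\left(g_{t}(\cdot):t\in[0,T]\right)$ a backward Loewner chain whose time-$T$ map is precisely $\hat{g}_{T}^{-1}$, and then recognise its driving process as a Brownian motion obtained from $\left(\widetilde{W}_{t}:t\in[0,T]\right)$ by time reversal; the claimed identity in law follows at the single time $T$, and $T$ being arbitrary finishes the proof.

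The key step is to reverse time in the centered forward flow. Writing $\hat{g}_{t}=g_{t}-\widetilde{W}_{t}$, set $\Phi_{s}:=\hat{g}_{T-s}\circ\hat{g}_{T}^{-1}$ for $s\in[0,T]$, which is well defined since the forward hulls $K_{t}$ increase in $t$, and which satisfies $\Phi_{0}=\mrm{id}$ and $\Phi_{T}=\hat{g}_{0}\circ\hat{g}_{T}^{-1}=\hat{g}_{T}^{-1}$. Fixing $w\in\mbb{H}$ and putting $v=\hat{g}_{T}^{-1}(w)$, one has $\Phi_{s}(w)=g_{T-s}(v)-\widetilde{W}_{T-s}$; since $t\mapsto g_{t}(v)$ solves the forward Loewner ODE pathwise, differentiating in $s$ yields, with $\widehat{W}_{s}:=\widetilde{W}_{T-s}-\widetilde{W}_{T}$,
\begin{equation*}
	d\Phi_{s}(w)=-\frac{2}{\Phi_{s}(w)}\,ds-d\widehat{W}_{s},\qquad \Phi_{0}(w)=w,
\end{equation*}
equivalently $s\mapsto\Phi_{s}(\cdot)+\widehat{W}_{s}$ is exactly the backward Loewner chain driven by $\widehat{W}$, so $\Phi_{s}(\cdot)$ is a measurable functional of $\left(\widehat{W}_{u}:u\le s\right)$ (the drift $x\mapsto -2/x$ is smooth on $\mbb{H}$ and $\Phi_{s}(w)$ stays in $\mbb{H}$, so pathwise existence and uniqueness present no difficulty). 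This is the point that requires the most care: $\left(\widetilde{W}_{T-s}:s\in[0,T]\right)$ is \emph{not} adapted to the filtration of $\left(\widetilde{W}_{t}:t\ge 0\right)$, so the display above has to be read pathwise — the forward flow $t\mapsto g_{t}(v)$ is an honest ODE for each realisation of the driver — and it becomes a genuine stochastic differential equation in the variable $s$ only once the next step identifies $\widehat{W}$ as a Brownian motion.

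That identification is Brownian time reversal: $\left(\widehat{W}_{s}:s\in[0,T]\right)=\sqrt{\kappa}\,\left(\widetilde{B}_{T-s}-\widetilde{B}_{T}:s\in[0,T]\right)$ is a centered continuous Gaussian process with covariance $\kappa\,(s\wedge s')$, hence $\left(\widehat{W}_{s}:s\in[0,T]\right)\overset{(\mrm{law})}{=}\left(W_{s}:s\in[0,T]\right)$. On the other hand, the centered backward SLE$(\kappa)$ satisfies $d\hat{f}_{s}(z)=-2\hat{f}_{s}(z)^{-1}\,ds-dW_{s}$ with $\hat{f}_{0}(z)=z$, so $\left(\hat{f}_{s}(\cdot):s\in[0,T]\right)$ and $\left(\Phi_{s}(\cdot):s\in[0,T]\right)$ are obtained from $W$ and $\widehat{W}$ respectively through one and the same measurable map. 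Therefore $\left(\Phi_{s}(\cdot):s\in[0,T]\right)\overset{(\mrm{law})}{=}\left(\hat{f}_{s}(\cdot):s\in[0,T]\right)$ as map-valued processes, and evaluating at $s=T$ together with $\Phi_{T}=\hat{g}_{T}^{-1}$ gives $\hat{g}_{T}^{-1}(\cdot)\overset{(\mrm{law})}{=}\hat{f}_{T}(\cdot)$, as desired. It is worth stressing that this coupling holds only for each fixed $T$ in isolation: run forward in $t$, the family $\left(f_{t}(\cdot):t\ge 0\right)$ and the reversed family $\left(g_{t}^{-1}(\cdot):t\ge 0\right)$ are genuinely different processes, which is exactly the obstruction, flagged in the introduction, that makes a \emph{multiple} backward SLE a nontrivial object to define.
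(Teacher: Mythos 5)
Your proof is correct and follows exactly the standard time-reversal argument that the paper itself points to: it cites \cite{Lawler2009b} rather than giving a proof, but explicitly identifies the key ingredient as the fact that $(B_{T-t}-B_{T}:t\in[0,T])$ is again a Brownian motion, which is precisely the step you use. Your handling of the pathwise (non-adapted) reading of the reversed equation and the remark that the identity holds only at each fixed $T$ are both accurate and consistent with the paper's discussion following the proposition.
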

Recall that, for an SLE$(\kappa)$ $(g_{t}(\cdot):t\geq 0)$, there is a random curve $\eta:[0,\infty)\to\overline{\mathbb{H}}$ and, at each $t\geq 0$, $g_{t}$ is the hydrodynamically normalized conformal transformation from the unbounded component of $\mbb{H}\backslash \eta(0,t]$ to $\mbb{H}$.
Let $(K_{t}:t\geq 0)$ be the family of compact $\mbb{H}$-hulls generated by a backward SLE$(\kappa)$ $(f_{t}(\cdot):t\geq 0)$ driven by $(W_{t}:t\geq 0)$.
Then, Proposition \ref{prop:backward_SLE_gives_inverse_mapping} implies that, at each $t\geq 0$, the probability law of $\mbb{H}\backslash (K_{t}-W_{t})$ coincides with that of the unbounded component of $\mbb{H}\backslash \eta(0,t]$.
In particular, if $\kappa\in (0,4]$, $K_{t}$ at each $t\geq 0$ is a simple curve a.s. since so is $\eta (0,t]$.
It is not, however, true that there exists a simple curve $\widetilde{\eta}:[0,\infty)\to\mbb{H}$ such that $K_{t}=\widetilde{\eta}(0,t]$, $t\geq 0$.

The proof of Proposition \ref{prop:backward_SLE_gives_inverse_mapping} relies on the fact that for a Brownian motion $(B_{t}:t\ge 0)$ and $T>0$,
the stochastic process $(B_{T-t}-B_{T}:t\in [0,T])$ is again a Brownian motion.
Therefore, we cannot expect the same property for a backward Loewner chain driven by a stochastic process with a drift term.

\begin{defn}
Let $\kappa>0$, $N\in\mbb{N}$ and $i\in \{1,\dots, N\}$ be fixed and 
let $b=b(x_{1},\dots, x_{N})$ be a function on $\mrm{Conf}_{N}(\mbb{R})$
that is translation invariant and homogeneous of degree $-1$.
We consider the stochastic process $\left(\bm{X}^{(i)}_{t}=(X^{(i,1)}_{t},\dots, X^{(i,N)}_{t}):t\geq 0\right)$ satisfying
\begin{align*}
	dX^{(i,i)}_{t}&=\sqrt{\kappa}dB_{t}+b(\bm{X}_{t}^{(i)})dt,\quad t\ge 0, \\
	\frac{d}{dt}X^{(i,j)}_{t}&=-\frac{2}{X_{t}^{(i,j)}-X^{(i,i)}_{t}},\quad t\ge 0,\quad j\neq i,
\end{align*}
where $(B_{t}:t \ge 0)$ is a standard Brownian motion.
We call the backward Loewner chain $\left(f_{t}(\cdot):t\ge 0 \right)$ driven by the $i$-th component $(X_{t}^{(i,i)}:t\geq 0)$ of the above stochastic process
the $i$-th backward SLE$(\kappa,b)$ driven by the stochastic process $\left(\bm{X}^{(i)}_{t}:t\ge 0\right)$.
For an $N$-point configuration $\bm{X}=(X_{1},\dots, X_{N})\in\mrm{Conf}_{N}(\mbb{R})$,
we say that the $i$-th backward SLE$(\kappa,b)$ starts at $\bm{X}$
if $\bm{X}^{(i)}_{0}=\bm{X}$.
\end{defn}

\begin{rem}
One must not be confused in usage of the term ``driving process".
For an $i$-th SLE$(\kappa,b)$ driven by $(\bm{X}^{(i)}_{t}:t\ge 0)$, only the $i$-th process $(X^{(i,i)}_{t}:t\ge 0)$ plays the role of the driving process of a Loewner chain.
It is, however, convenient to call $(\bm{X}^{(i)}_{t}:t\ge 0)$ the driving process of the $i$-th SLE$(\kappa,b)$ in the case when one needs to keep track of other points as well.
\end{rem}

The assumption that the function $b$ is translation invariant and homogeneous of degree $-1$
ensures that the law of the associated family of compact $\mbb{H}$-hulls $(K_{t}:t\geq 0)$ is conformally invariant.
Indeed, this homogeneity of $b$ gives the property that
\begin{equation*}
	d\left(\lambda X^{(i,i)}_{t}\right)=\sqrt{\kappa}dB_{\lambda^{2}t}+b(\lambda\bm{X}^{(i)}_{t})d(\lambda^{2}t),\ \ t\ge 0,
\end{equation*}
for an arbitrary constant $\lambda>0$.

Suppose that $\kappa_{i}>0$, $i=1,\dots, N$ and functions $b_{i}$, $i=1,\dots, N$ on $\mrm{Conf}_{N}(\mbb{R})$ that are translation invariant and homogeneous of degree $-1$ are given.
For each $i=1,\dots, N$, let $\left(f^{(i)}_{t}(\cdot):t\ge 0\right)$ be an $i$-th backward SLE$(\kappa_{i},b_{i})$ driven by a stochastic process $\left(\bm{X}^{(i)}_{t}\in\mrm{Conf}_{N}(\mbb{R}):t\ge 0\right)$.
We write the filtration associated with $\left(\bm{X}^{(i)}_{t}:t\ge 0\right)$ as $(\mcal{F}^{(i)}_{t})_{t\ge 0}$, $i=1,\dots, N$,
and assume that $\left(\mcal{F}^{(i)}_{t}\right)_{t\ge 0}$ are mutually independent.

Let us fix a pair $\{i,j\}\subset\{1,\dots,N\}$.
Using the backward Loewner chains introduced above,
we have two schemes of generating two compact $\mbb{H}$-hulls given an $N$-point configuration $\bm{X}=(X_{1},\dots, X_{N})\in\mrm{Conf}_{N}(\mbb{R})$ (see Figure \ref{fig:commutation_relation}):
\begin{description}
\item[Scheme 1]	Generate a compact $\mbb{H}$-hull $K^{(i)}_{\varepsilon}$ according to the $i$-th backward SLE$(\kappa_{i},b_{i})$ $(f^{(i)}_{t}(\cdot):t\ge 0)$
				starting at $\bm{X}$ up to a time $\varepsilon >0$.
				Next, forgetting the first compact $\mbb{H}$-hull $K^{(i)}_{\varepsilon}$, generate the second compact $\mbb{H}$-hull $K^{(j)}_{\tilde{\varepsilon}}$
				letting the $j$-th backward SLE$(\kappa_{j},b_{j})$ $\left(f^{(j)}_{s}(\cdot):s\ge 0\right)$ starting at $\bm{X}^{(i)}_{\varepsilon}$
				evolve up to a time $\tilde{\varepsilon}>0$.
				We also require that
				\begin{equation*}
					\mrm{hcap}\left(f^{(j)}_{\tilde{\varepsilon}}(K^{(i)}_{\varepsilon})\right)=2c\tilde{\varepsilon}
				\end{equation*}
				for a fixed $c>0$.
				Then, one obtains the union of two compact $\mbb{H}$-hulls $K^{1}_{c,\tilde{\varepsilon}}:=f^{(j)}_{\tilde{\varepsilon}}(K^{(i)}_{\varepsilon})\cup K^{(j)}_{\tilde{\varepsilon}}$.
\item[Scheme 2] 	Generate a compact $\mbb{H}$-hull $K^{(j)}_{\pr{\varepsilon}}$ according to the $j$-th backward SLE$(\kappa_{j},b_{j})$ $\left(f^{(j)}_{s}(\cdot):s\ge 0\right)$ starting at
				$\bm{X}$ up to a time $\pr{\varepsilon}>0$.
				Next, forgetting the first compact $\mbb{H}$-hull $K^{(j)}_{\pr{\varepsilon}}$, generate the second one $K^{(i)}_{c\tilde{\varepsilon}}$
				letting the $i$-th backward SLE$(\kappa_{i},b_{i})$ $\left(f^{(i)}_{t}(\cdot):t\ge 0\right)$ starting at $\bm{X}^{(j)}_{\pr{\varepsilon}}$
				evolve up to a time $c\tilde{\varepsilon}>0$, where $\tilde{\varepsilon}>0$ and $c>0$ are those taken in {\bf Scheme 1}.
				We also require that
				\begin{align*}
					\mrm{hcap}\left(f^{(i)}_{c\tilde{\varepsilon}}(K^{(j)}_{\pr{\varepsilon}})\right)=2\tilde{\varepsilon}.
				\end{align*}
				Then, one obtains the union of two compact $\mbb{H}$-hulls $K^{2}_{c,\tilde{\varepsilon}}:=K^{(i)}_{c\tilde{\varepsilon}}\cup f^{(i)}_{c\tilde{\varepsilon}}(K^{(j)}_{\pr{\varepsilon}})$.
\end{description}
Notice that $\varepsilon$ and $\pr{\varepsilon}$ are determined by $\tilde{\varepsilon}$ and $c$. In the subsequent arguments, we think of $\tilde{\varepsilon}$ and $c$ as independent parameters.

\begin{defn}
The $i$-th backward SLE$(\kappa_{i},b_{i})$ and $j$-th backward SLE$(\kappa_{j},b_{j})$ are said to be commutative if
$K^{1}_{c,\tilde{\varepsilon}}\overset{(\mrm{law})}{=}K^{2}_{c,\tilde{\varepsilon}}$ for an arbitrary initial condition $\bm{X}\in\mrm{Conf}_{N}(\mbb{R})$ and arbitrary $\tilde{\varepsilon}>0$, $c>0$.
\end{defn}

\begin{figure}
\begin{center}
\includegraphics[width=\linewidth]{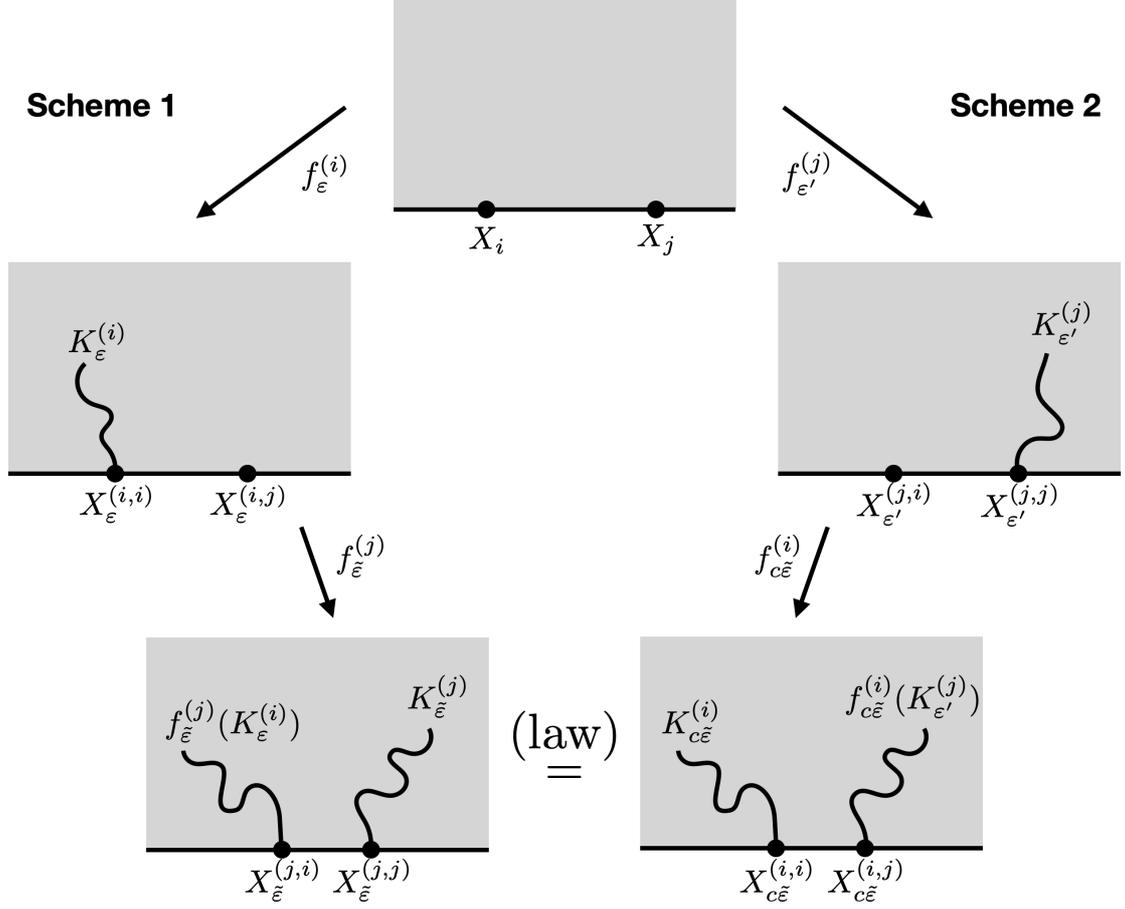}
\caption{Commutation relation of backward Loewner chains.}
\label{fig:commutation_relation}
\end{center}
\end{figure}

Here is the main theorem in this section.
\begin{thm}
\label{thm:commutation}
The $i$-th backward SLE$(\kappa_{i},b_{i})$ and $j$-th backward SLE$(\kappa_{j},b_{j})$ are commutative if and only if
the following conditions are satisfied:
\begin{enumerate}
\item 	Either $\kappa_{i}=\kappa_{j}$ or $\kappa_{i}=16/\kappa_{j}$.
\item 	There exists a translation invariant homogeneous function $\mcal{Z}=\mcal{Z}(x_{1},\dots, x_{N})$ on $\mrm{Conf}_{N}(\mbb{R})$
		with the following properties:
	\begin{enumerate}
	\item 	The functions $b_{k}$, $k=i,j$ are given by $b_{k}=\kappa_{k}\del_{x_{k}}\log\mcal{Z}$, $k=i,j$.
	\item 	There exists a function $F_{ij}=F_{ij}(x,x_{1},\dots,\hat{x}_{i},\dots,\hat{x}_{j},\dots, x_{N})$ on $\mrm{Conf}_{N-1}(\mbb{R})$
			homogeneous of degree $-2$ such that
			\begin{align*}
				\left(\frac{\kappa_{i}}{2}\del_{x_{i}}^{2}-\sum_{k;k\neq i}\frac{2}{x_{k}-x_{i}}\del_{x_{k}}+\frac{2h_{\kappa_{j}}}{(x_{j}-x_{i})^{2}}+F_{ij}(x_{i},\bm{x})\right)\mcal{Z}&=0, \\
				\left(\frac{\kappa_{j}}{2}\del_{x_{j}}^{2}-\sum_{k;k\neq j}\frac{2}{x_{k}-x_{j}}\del_{x_{k}}+\frac{2h_{\kappa_{i}}}{(x_{i}-x_{j})^{2}}+F_{ij}(x_{j},\bm{x})\right)\mcal{Z}&=0,
			\end{align*}
			where we set $h_{\kappa}=-\frac{\kappa+6}{2\kappa}$.
	\end{enumerate}
	Moreover, the function $\mcal{Z}$ is unique up to multiplicative constant.
\end{enumerate}	
\end{thm}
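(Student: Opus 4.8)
The plan is to adapt the infinitesimal commutation-relation argument of Dub\'edat and Graham for forward Loewner chains to the backward flow. The first step is to express both $K^{1}_{c,\tilde\varepsilon}$ and $K^{2}_{c,\tilde\varepsilon}$ through composed conformal maps: setting $h^{(\bullet)}_{t}:=(f^{(\bullet)}_{t})^{-1}$, the hydrodynamically normalized map of $\mbb{H}\backslash K^{1}_{c,\tilde\varepsilon}$ is $h^{(i)}_{\varepsilon}\circ h^{(j)}_{\tilde\varepsilon}$ and that of $\mbb{H}\backslash K^{2}_{c,\tilde\varepsilon}$ is $h^{(j)}_{\pr{\varepsilon}}\circ h^{(i)}_{c\tilde\varepsilon}$, where the half-plane-capacity constraints fix $\varepsilon$ and $\pr{\varepsilon}$ as functions of $\tilde\varepsilon,c$ with $\varepsilon=c\tilde\varepsilon+o(\tilde\varepsilon)$ and $\pr{\varepsilon}=\tilde\varepsilon+o(\tilde\varepsilon)$. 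Since a compact $\mbb{H}$-hull is recovered from the Laurent coefficients at infinity of its uniformizing map, and $g_{K}\to\mrm{id}$ as $K$ degenerates to a point, the two (small) hulls have the same law for all $c,\tilde\varepsilon>0$ if and only if $\mbb{E}\big[\Phi(h^{(i)}_{\varepsilon}\circ h^{(j)}_{\tilde\varepsilon})\big]=\mbb{E}\big[\Phi(h^{(j)}_{\pr{\varepsilon}}\circ h^{(i)}_{c\tilde\varepsilon})\big]$ for a separating family of smooth functionals $\Phi$ depending on finitely many of the values $g_{K}(z_{l})$ and derivatives $g_{K}'(z_{l})$, $z_{l}\in\mbb{H}$.

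Next I would pass to generators. For each chain introduce the It\^o generator $\mcal{A}_{k}$ acting on functions of the marked-point configuration $\bm{x}$ together with the tracked points $z_{l}$ and the log-derivatives $\log g'(z_{l})$: it is the sum of the configuration part $\tfrac{\kappa_{k}}{2}\del_{x_{k}}^{2}+b_{k}\del_{x_{k}}-\sum_{m\neq k}\tfrac{2}{x_{m}-x_{k}}\del_{x_{m}}$ and the backward-Loewner vector field acting on the test data. Conditioning each scheme on its first chain and expanding both stages to second order in $\tilde\varepsilon$ via It\^o's formula (the deterministic half-plane-capacity time change being handled so that its contribution is absorbed into these same operators), Scheme 1 yields
\[
 \mbb{E}\big[\Phi(h^{(i)}_{\varepsilon}\circ h^{(j)}_{\tilde\varepsilon})\big]
 =\Big(1+\tilde\varepsilon\,(c\mcal{A}_{i}+\mcal{A}_{j})+\tilde\varepsilon^{2}\big(\tfrac{c^{2}}{2}\mcal{A}_{i}^{2}+c\,\mcal{A}_{i}\mcal{A}_{j}+\tfrac{1}{2}\mcal{A}_{j}^{2}\big)\Big)\Phi+o(\tilde\varepsilon^{2}),
\]
and Scheme 2 the same expression with $\mcal{A}_{i}\mcal{A}_{j}$ replaced by $\mcal{A}_{j}\mcal{A}_{i}$. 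Since the order $\tilde\varepsilon$ and the non-mixed order $\tilde\varepsilon^{2}$ terms already agree, the two chains commute if and only if $[\mcal{A}_{i},\mcal{A}_{j}]\Phi=0$ for every admissible $\Phi$.

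The heart of the argument is to unpack this commutator. Expanding $[\mcal{A}_{i},\mcal{A}_{j}]$ and sorting by the independent structures (powers of $(x_{i}-x_{j})^{-1}$, derivatives in $x_{i}$ and $x_{j}$, and derivatives in the test variables), the most singular terms cancel precisely when $\kappa_{i}=\kappa_{j}$ or $\kappa_{i}=16/\kappa_{j}$, which is condition (1); the next layer shows that $b_{i}\,dx_{i}$ and $b_{j}\,dx_{j}$ must be logarithmic differentials of one common function, i.e. $b_{k}=\kappa_{k}\del_{x_{k}}\log\mcal{Z}$; and the residual equations say exactly that $\mcal{Z}$ solves the stated pair of second-order equations, in which the conformal weight of the \emph{other} growing point enters through $2h_{\kappa_{j}}/(x_{j}-x_{i})^{2}$ and $2h_{\kappa_{i}}/(x_{i}-x_{j})^{2}$ with $h_{\kappa}=-\tfrac{\kappa+6}{2\kappa}$ (the sign being opposite to the forward value $\tfrac{6-\kappa}{2\kappa}$, which is the footprint of the minus sign in the backward Loewner equation), while everything not pinned down by the commutator is collected into a single homogeneous degree $-2$ function $F_{ij}$ shared by both equations. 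For the converse, I would substitute $b_{k}=\kappa_{k}\del_{x_{k}}\log\mcal{Z}$ back into $[\mcal{A}_{i},\mcal{A}_{j}]$ and use the two equations of (2)(b) repeatedly to eliminate $\del_{x_{i}}^{2}\mcal{Z}$ and $\del_{x_{j}}^{2}\mcal{Z}$, showing that the commutator then annihilates every $\Phi$; a continuity (Gronwall-type) argument then propagates this infinitesimal identity to exact equality of the two hull laws for all $c,\tilde\varepsilon>0$, giving commutation.

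For the uniqueness of $\mcal{Z}$ up to a multiplicative constant, suppose $\mcal{Z}'$ also satisfies (2)(a)--(b) with some $F'_{ij}$. By (2)(a) the ratio $R:=\mcal{Z}/\mcal{Z}'$ satisfies $\del_{x_{i}}\log R=\del_{x_{j}}\log R=0$, so $R$ is independent of $x_{i}$ and $x_{j}$; feeding $\mcal{Z}=R\mcal{Z}'$ into the $x_{i}$- and $x_{j}$-equations and subtracting the corresponding equations for $\mcal{Z}'$ leaves $\sum_{m\neq i,j}\tfrac{2}{x_{m}-x_{i}}\del_{x_{m}}R=(F_{ij}-F'_{ij})(x_{i},\bm{x})\,R$ together with its $x_{j}$-analogue, and comparing these with the translation invariance and homogeneity of $R$ forces $\del_{x_{m}}R\equiv0$ for all $m$, so $R$ is constant. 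The step I expect to be the main obstacle is the two-stage It\^o/Taylor bookkeeping behind the generator expansion: one must transport the conformally covariant test data, the spectator marked points and the capacity reparametrization correctly through the first infinitesimal hull before the second is grown, so that the mixed term is genuinely $c\,\mcal{A}_{i}\mcal{A}_{j}$ with $\mcal{A}_{i}$ the generator that also moves the already-displaced $z_{l}$'s and their derivative factors. The subsequent algebra extracting conditions (1) and (2) from $[\mcal{A}_{i},\mcal{A}_{j}]=0$ is finite but delicate, the most ticklish point being to confirm that the part of the two equations left free by the commutator is one and the same function $F_{ij}$ rather than two a priori distinct ones.
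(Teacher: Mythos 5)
Your overall strategy---reduce commutativity of the two schemes to an identity between second-order semigroup expansions and then read off conditions on $b_{i},b_{j}$ from a commutator of generators---is the same as the paper's. But there is a concrete error at the pivotal step. The half-plane-capacity matching does not merely give $\varepsilon=c\tilde{\varepsilon}+o(\tilde{\varepsilon})$; expanding $\mrm{hcap}\bigl(f^{(j)}_{\tilde{\varepsilon}}(K^{(i)}_{\varepsilon})\bigr)$ one finds
\begin{equation*}
\varepsilon=\Bigl(1-\tfrac{4\tilde{\varepsilon}}{(X_{i}-X_{j})^{2}}\Bigr)c\tilde{\varepsilon}+o(\tilde{\varepsilon}^{2}),\qquad
\pr{\varepsilon}=\Bigl(1-\tfrac{4c\tilde{\varepsilon}}{(X_{i}-X_{j})^{2}}\Bigr)\tilde{\varepsilon}+o(\tilde{\varepsilon}^{2}),
\end{equation*}
and the second-order parts of these time changes contribute $-\tilde{\varepsilon}^{2}\frac{4c}{(x_{i}-x_{j})^{2}}\mcal{A}_{i}$ to Scheme~1 and $-\tilde{\varepsilon}^{2}\frac{4c}{(x_{i}-x_{j})^{2}}\mcal{A}_{j}$ to Scheme~2. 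These terms cannot be ``absorbed into the same operators'': they are genuinely present at order $\tilde{\varepsilon}^{2}$ and they do not cancel between the two schemes. Consequently the correct infinitesimal condition is the \emph{inhomogeneous} commutation relation
\begin{equation*}
[\mcal{A}_{i},\mcal{A}_{j}]=\frac{4}{(x_{i}-x_{j})^{2}}\bigl(\mcal{A}_{i}-\mcal{A}_{j}\bigr),
\end{equation*}
not $[\mcal{A}_{i},\mcal{A}_{j}]=0$. This is not cosmetic: the right-hand side supplies the terms $\frac{2b_{k}}{(x_{i}-x_{j})^{2}}$ and part of the $(x_{i}-x_{j})^{-3}$ coefficients in the first-order conditions on $b_{i},b_{j}$, and these are exactly what produce the weights $h_{\kappa}=-\frac{\kappa+6}{2\kappa}$ in the equations of condition (2)(b). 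Demanding a vanishing commutator would lead to different, incompatible constraints, so the derivation of (1) and (2) as sketched would not go through.

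Two further points are left unresolved. First, the alternative $\kappa_{i}=\kappa_{j}$ or $\kappa_{i}\kappa_{j}=16$ does not come from cancelling ``the most singular terms'' of $[\mcal{A}_{i},\mcal{A}_{j}]$; in the paper it arises one stage later, from the integrability of the two second-order equations for $\mcal{Z}$: the operator $[\mcal{Q}_{i}+F_{i},\mcal{Q}_{j}+F_{j}]-\frac{4}{(x_{i}-x_{j})^{2}}\bigl((\mcal{Q}_{i}+F_{i})-(\mcal{Q}_{j}+F_{j})\bigr)$ is a multiplication operator annihilating $\mcal{Z}$, hence identically zero, and its fourth-order pole at $x_{i}=x_{j}$ has coefficient proportional to $(\kappa_{i}-\kappa_{j})(\kappa_{i}\kappa_{j}-16)$. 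Second, the identification of the two a priori distinct undetermined functions $F_{i}$ and $F_{j}$ with a single $F_{ij}$ --- which you flag as the ticklish point but do not carry out --- follows from the vanishing of the second-order pole of that same multiplication operator, which forces $\lim_{x_{i}\to x}F_{i}(x_{i},\bm{x})=\lim_{x_{j}\to x}F_{j}(x_{j},\bm{x})$ for all $x$. Your uniqueness argument is essentially the paper's, except that $\del_{x_{m}}R=0$ for $m\neq i,j$ is obtained not from translation invariance and homogeneity but by isolating the simple pole at $x_{i}=x_{m}$ in the residual identity $\sum_{m;m\neq i,j}\frac{1}{x_{i}-x_{m}}\del_{x_{m}}R=0$.
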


\begin{proof}
The stochastic processes $\left(\bm{X}^{(k)}_{t}:t\ge 0\right)$, $k=i,j$ are Markov processes.
Thinking of $\mrm{Conf}_{N}(\mbb{R})$ as a subset of $\mbb{R}^{N}$,
their generators are derived by means of It{\^o}'s formula so that
\begin{equation*}
	\mcal{L}_{k}=\frac{\kappa_{k}}{2}\del_{x_{k}}^{2}+b_{k}(\bm{x})\del_{x_{k}}-\sum_{\ell;\ell \neq k}\frac{2}{x_{\ell}-x_{k}}\del_{x_{\ell}}, \quad k=i,j.
\end{equation*}

First let us determine the time $\varepsilon$ in {\bf Scheme 1} in terms of $\tilde{\varepsilon}$.
Let $\left(f^{(j)}_{s}(\cdot):s\ge 0\right)$ be the $j$-th backward SLE$(\kappa_{j},b_{j})$ starting at $\bm{X}^{(i)}_{\varepsilon}$.
From the Loewner equation, we have
\begin{equation*}
	\frac{d}{ds}(f^{(j)}_{s})^{\prime}(z)=\frac{2(f^{(j)}_{s})^{\prime}(z)}{(f^{(j)}_{s}(z)-X^{(j,j)}_{s})^{2}},\quad s\ge 0.
\end{equation*}
Then, up to the first order of $\tilde{\varepsilon}$,
\begin{equation*}
	(f^{(j)}_{\tilde{\varepsilon}})^{\prime}(X^{(i,i)}_{\varepsilon})=1+\frac{2\tilde{\varepsilon}}{(X^{(i,i)}_{\varepsilon}-X^{(i,j)}_{\varepsilon})^{2}}+o(\tilde{\varepsilon}).
\end{equation*}
Because of the scaling property of the half-plane capacity, we see that
\begin{equation}
\label{eq:hcap_expansion}
	\mrm{hcap}\left(f^{(j)}_{\tilde{\varepsilon}}(K^{(i)}_{\varepsilon})\right)
	=\left(1+\frac{4\tilde{\varepsilon}}{(X^{(i,i)}_{\varepsilon}-X^{(i,j)}_{\varepsilon})^{2}}\right)2\varepsilon +o(\tilde{\varepsilon}^{2}).
\end{equation}
Note that $o(\tilde{\varepsilon}^{2})$ in (\ref{eq:hcap_expansion}) is independent of $\varepsilon$.
Hence, we can determine $\varepsilon$ in terms of $\tilde{\varepsilon}$ and $c$ up to the second order of $\tilde{\varepsilon}$ by equating (\ref{eq:hcap_expansion}) to $2c\tilde{\varepsilon}$ so that
\begin{equation}
\label{eq:time_scheme1}
	\varepsilon=\left(1-\frac{4\tilde{\varepsilon}}{(X_{i}-X_{j})^{2}}\right)c\tilde{\varepsilon}+o(\tilde{\varepsilon}^{2}).
\end{equation}
Here, $X_{i}$ and $X_{j}$ are the $i$-th and $j$-th components of an initial condition $\bm{X}=(X_{1},\dots, X_{N})\in\mrm{Conf}_{N}(\mbb{R})$, respectively.
In a similar manner, the time $\pr{\varepsilon}$ in {\bf Scheme 2} is determined as
\begin{equation}
\label{eq:time_scheme2}
	\pr{\varepsilon}=\left(1-\frac{4c\tilde{\varepsilon}}{(X_{j}-X_{i})^{2}}\right)\tilde{\varepsilon}+o(\tilde{\varepsilon}^{2}).
\end{equation}

Set $\mcal{G}_{t,s}:=\mcal{F}^{(i)}_{t}\vee \mcal{F}^{(j)}_{s}$, $t,s\ge 0$.
Then, $(\mcal{G}_{t,s})_{t,s\ge 0}$ forms a double filtration of $\sigma$-algebras.
Let $\varphi=\varphi(\bm{x})\in C_{\mrm{b}}^{\infty}(\mbb{R}^{N})$ be a bounded smooth function.
In {\bf Scheme 1}, we see that
\begin{align*}
	\mbb{E}\left[\varphi(\bm{X}^{(j)}_{\tilde{\varepsilon}})\right]
	=\mbb{E}\left[\mbb{E}\left[\varphi(\bm{X}^{(j)}_{\tilde{\varepsilon}})\Big|\mcal{G}_{\varepsilon,0}\right]\right]
	=\mbb{E}\left[\left(e^{\tilde{\varepsilon}\mcal{L}_{j}}\varphi\right)(\bm{X}^{(i)}_{\varepsilon})\right]
	=\left(e^{\varepsilon\mcal{L}_{i}}e^{\tilde{\varepsilon}\mcal{L}_{j}}\varphi\right)(\bm{X}).
\end{align*}
On the other hand, in {\bf Scheme 2}, we have
\begin{align*}
	\mbb{E}\Bigl[\varphi (\bm{X}^{(i)}_{c\tilde{\varepsilon}})\Bigr]
	=\mbb{E}\left[\mbb{E}\left[\varphi (\bm{X}^{(i)}_{c\tilde{\varepsilon}})\Big|\mcal{G}_{0,\pr{\varepsilon}}\right]\right]
	=\mbb{E}\left[\left(e^{c\tilde{\varepsilon}\mcal{L}_{i}}\varphi\right)(\bm{X}^{(j)}_{\pr{\varepsilon}})\right]
	=\left(e^{\pr{\varepsilon}\mcal{L}_{j}}e^{c\tilde{\varepsilon} \mcal{L}_{i}}\varphi\right)(\bm{X}).
\end{align*}
Therefore, the desired equivalence $K^{1}_{c,\tilde{\varepsilon}}\overset{(\mrm{law})}{=}K^{2}_{c,\tilde{\varepsilon}}$ holds if and only if
the following relation among operators is valid:
\begin{equation}
\label{eq:commutation_finite_time}
	e^{\varepsilon\mcal{L}_{i}}e^{\tilde{\varepsilon}\mcal{L}_{j}}=e^{\pr{\varepsilon}\mcal{L}_{j}}e^{c\tilde{\varepsilon} \mcal{L}_{i}}.
\end{equation}
Using the expressions (\ref{eq:time_scheme1}) and (\ref{eq:time_scheme2}), each side becomes
\begin{align*}
	e^{\varepsilon\mcal{L}_{i}}e^{\tilde{\varepsilon}\mcal{L}_{j}}
	&=1+\tilde{\varepsilon}\left(c\mcal{L}_{i}+\mcal{L}_{j}\right)
		+\tilde{\varepsilon}^{2}\left(-\frac{4c\mcal{L}_{i}}{(x_{i}-x_{j})^{2}}+\frac{c^{2}\mcal{L}_{i}^{2}}{2}+c\mcal{L}_{i}\mcal{L}_{j}
				+\frac{\mcal{L}_{j}^{2}}{2}\right) + o(\tilde{\varepsilon}^{2}),\\
	e^{\pr{\varepsilon}\mcal{L}_{j}}e^{c\tilde{\varepsilon}\mcal{L}_{i}}
	&=1+\tilde{\varepsilon}\left(\mcal{L}_{j}+c\mcal{L}_{i}\right)
		+\tilde{\varepsilon}^{2}\left(-\frac{4c\mcal{L}_{j}}{(x_{j}-x_{i})^{2}}+\frac{\mcal{L}_{j}^{2}}{2}+c\mcal{L}_{j}\mcal{L}_{i}
				+\frac{c^{2}\mcal{L}_{i}^{2}}{2}\right) + o(\tilde{\varepsilon}^{2}).
\end{align*}
Therefore, we can see, by comparing the coefficients of $\tilde{\varepsilon}^{2}$, that if the relation (\ref{eq:commutation_finite_time}) holds,
then it follows that the commutation relation between infinitesimal generators
\begin{equation}
\label{eq:commutation_infinitesimal}
	\left[\mcal{L}_{i},\mcal{L}_{j}\right]=\frac{4}{(x_{i}-x_{j})^{2}}\left(\mcal{L}_{i}-\mcal{L}_{j}\right)
\end{equation}
holds.
Note that the commutation relation (\ref{eq:commutation_infinitesimal}) imposes conditions on input data $\kappa_{k}$ and $b_{k}$, $k=i,j$.

Conversely, the analogous argument as in \cite[Section 6]{Dubedat2007} allows us to see that the infinitesimal commutation relation (\ref{eq:commutation_infinitesimal}) ensures the finite commutation relation (\ref{eq:commutation_finite_time}).
Notice that (\ref{eq:commutation_infinitesimal}) implies the finite commutation modulo $o(\tilde{\varepsilon}^{2})$.
Informally speaking, for any $M\in\mbb{N}$, we divide the left hand side of (\ref{eq:commutation_finite_time}) into
\begin{equation*}
	e^{\varepsilon\mcal{L}_{i}}e^{\tilde{\varepsilon}\mcal{L}_{j}}=\left(e^{\frac{\varepsilon}{M}\mcal{L}_{i}}\right)^{M}\left(e^{\frac{\tilde{\varepsilon}}{M}\mcal{L}_{j}}\right)^{M}.
\end{equation*}
The idea is to compare this to the right hand side of (\ref{eq:commutation_finite_time}) by permuting every pair of small pieces $e^{\frac{\varepsilon}{M}\mcal{L}_{i}}$ and $e^{\frac{\tilde{\varepsilon}}{M}\mcal{L}_{j}}$. Here, notice that permuting a pair of these small pieces gives an error term of order $o((\tilde{\varepsilon}/M)^{2})$, and the number of permutations required is of order $M^{2}$.
Consequently, the difference between both sides of (\ref{eq:commutation_finite_time}) consists of roughly $M^{2}$ error terms of $o((\tilde{\varepsilon}/M)^{2})$.
Since $M$ is arbitrary, we can take the limit $M\to\infty$ to see that the accumulation of the error terms vanishes and that (\ref{eq:commutation_finite_time}) holds.

After some computation, we have
\begin{align*}
	&\left[\mcal{L}_{i},\mcal{L}_{j}\right]-\frac{4}{(x_{i}-x_{j})^{2}}\left(\mcal{L}_{i}-\mcal{L}_{j}\right)\\
	&=\left(\kappa_{i}\del_{x_{i}}b_{j}-\kappa_{j}\del_{x_{j}}b_{i}\right)\del_{x_{i}}\del_{x_{j}} \\
	&\hspace{15pt}-\left(\frac{\kappa_{j}}{2}\del_{x_{j}}^{2}b_{i}+b_{j}\del_{x_{j}}b_{i}+\frac{2b_{i}}{(x_{i}-x_{j})^{2}}-\sum_{k;k\neq j}\frac{2\del_{x_{k}}b_{i}}{x_{k}-x_{j}}+\frac{2\kappa_{i}+12}{(x_{i}-x_{j})^{3}}\right)\del_{x_{i}} \\
	&\hspace{15pt}+\left(\frac{\kappa_{i}}{2}\del_{x_{i}}^{2}b_{j}+b_{i}\del_{x_{i}}b_{j}+\frac{2b_{j}}{(x_{j}-x_{i})^{2}}-\sum_{k;k\neq i}^{N}\frac{2\del_{x_{k}}b_{j}}{x_{k}-x_{i}}+\frac{2\kappa_{j}+12}{(x_{j}-x_{i})^{3}}\right)\del_{x_{j}}.
\end{align*}
Therefore, the commutation relation (\ref{eq:commutation_infinitesimal}) is equivalent to the following conditions
\begin{align}
\label{eq:commutation_rot}
	&\kappa_{i}\del_{x_{i}}b_{j}-\kappa_{j}\del_{x_{j}}b_{i}=0,\\
\label{eq:commutation_x_i}
	&\frac{\kappa_{j}}{2}\del_{x_{j}}^{2}b_{i}+b_{j}\del_{x_{j}}b_{i}+\frac{2b_{i}}{(x_{i}-x_{j})^{2}}-\sum_{k;k\neq j}\frac{2\del_{x_{k}}b_{i}}{x_{k}-x_{j}}+\frac{2\kappa_{i}+12}{(x_{i}-x_{j})^{3}}=0,\\
\label{eq:commutation_x_j}
	&\frac{\kappa_{i}}{2}\del_{x_{i}}^{2}b_{j}+b_{i}\del_{x_{i}}b_{j}+\frac{2b_{j}}{(x_{j}-x_{i})^{2}}-\sum_{k;k\neq i}^{N}\frac{2\del_{x_{k}}b_{j}}{x_{k}-x_{i}}+\frac{2\kappa_{j}+12}{(x_{j}-x_{i})^{3}}=0.
\end{align}
Since every connected component of $\mrm{Conf}_{N}(\mbb{R})$ is simply connected,
from (\ref{eq:commutation_rot}), we see that there exists a function $\mcal{Z}=\mcal{Z}(\bm{x})$ on $\mrm{Conf}_{N}(\mbb{R})$ such that
$b_{k}=\kappa_{k}\del_{x_{k}}\log \mcal{Z}$, $k=i,j$.
Note that the function $\mcal{Z}$ is unique up to multiplication by functions independent of $x_{i}$ and $x_{j}$.
Besides, since $b_{k}$, $k=i,j$ are translation invariant and homogeneous of degree $-1$, the function $\mcal{Z}$ is also translation invariant and homogeneous.
Substituting them into (\ref{eq:commutation_x_i}) and (\ref{eq:commutation_x_j}), we see that
\begin{align*}
	\kappa_{i}\del_{x_{i}}\left(\frac{\kappa_{j}}{2}\frac{\del_{x_{j}}^{2}\mcal{Z}}{\mcal{Z}}-\sum_{k;k\neq j}\frac{2}{x_{k}-x_{j}}\frac{\del_{x_{k}}\mcal{Z}}{\mcal{Z}}+\frac{2h_{\kappa_{i}}}{(x_{i}-x_{j})^{2}}\right)&=0,\\
	\kappa_{j}\del_{x_{j}}\left(\frac{\kappa_{i}}{2}\frac{\del_{x_{i}}^{2}\mcal{Z}}{\mcal{Z}}-\sum_{k;k\neq i}\frac{2}{x_{k}-x_{i}}\frac{\del_{x_{k}}\mcal{Z}}{\mcal{Z}}+\frac{2h_{\kappa_{j}}}{(x_{j}-x_{i})^{2}}\right)&=0,
\end{align*}
where we set
\begin{equation*}
	h_{\kappa}=-\frac{\kappa+6}{2\kappa}.
\end{equation*}
Therefore, there exist functions $F_{k}=F_{k}(x, x_{1},\dots,\hat{x}_{i},\dots, \hat{x}_{j},\dots, x_{N})$, $k=i,j$
such that the function $\mcal{Z}$ satisfies the following set of differential equations:
\begin{align*}
	\left(\frac{\kappa_{i}}{2}\del_{x_{i}}^{2}-\sum_{k;k\neq i}\frac{2}{x_{k}-x_{i}}\del_{x_{k}}+\frac{2h_{\kappa_{j}}}{(x_{j}-x_{i})^{2}}+F_{i}(x_{i},\bm{x})\right)\mcal{Z}&=0, \\
	\left(\frac{\kappa_{j}}{2}\del_{x_{j}}^{2}-\sum_{k;k\neq j}\frac{2}{x_{k}-x_{j}}\del_{x_{k}}+\frac{2h_{\kappa_{i}}}{(x_{i}-x_{j})^{2}}+F_{j}(x_{j},\bm{x})\right)\mcal{Z} &=0.
\end{align*}
For this system of differential equations to have a nonzero solution $\mcal{Z}$, the functions $F_{i}$ and $F_{j}$ have to be chosen properly.
To find conditions on $F_{i}$ and $F_{j}$, we set
\begin{align*}
	\mcal{Q}_{i}&=\frac{\kappa_{i}}{2}\del_{x_{i}}^{2}-\sum_{k;k\neq i}\frac{2}{x_{k}-x_{i}}\del_{x_{k}}+\frac{2h_{\kappa_{j}}}{(x_{j}-x_{i})^{2}}, \\
	\mcal{Q}_{j}&=\frac{\kappa_{j}}{2}\del_{x_{j}}^{2}-\sum_{k;k\neq j}\frac{2}{x_{k}-x_{j}}\del_{x_{k}}+\frac{2h_{\kappa_{i}}}{(x_{i}-x_{j})^{2}}.
\end{align*}
Then, $\mcal{Z}$ is annihilated by any operators from the ideal generated by $\mcal{Q}_{i}+F_{i}(x_{i},\bm{x})$ and $\mcal{Q}_{j}+F_{j}(x_{j},\bm{x})$
in the ring of differential operators.
In particular, it is annihilated by the following operator:
\begin{align*}
	&\bigl[\mcal{Q}_{i}+F_{i}(x_{i},\bm{x}),\mcal{Q}_{j}+F_{j}(x_{j},\bm{x})\bigr]-\frac{4}{(x_{i}-x_{j})^{2}}\bigl((\mcal{Q}_{i}+F_{i}(x_{i},\bm{x}))-(\mcal{Q}_{j}+F_{j}(x_{j},\bm{x}))\bigr)\\
	&=\frac{-3(\kappa_{i}-\kappa_{j})(\kappa_{i}\kappa_{j}-16)}{\kappa_{i}\kappa_{j}(x_{i}-x_{j})^{4}}-\frac{4(F_{i}(x_{i},\bm{x})-F_{j}(x_{j},\bm{x}))}{(x_{i}-x_{j})^{2}} \\
	&\hspace{20pt}+\sum_{k;k\neq j}\frac{2\del_{x_{k}}F_{i}(x_{i},\bm{x})}{x_{k}-x_{j}}-\sum_{k;k\neq i}\frac{2\del_{x_{k}}F_{j}(x_{j},\bm{x})}{x_{k}-x_{i}},
\end{align*}
which is just a multiplication operator.
Therefore, if there exists a nonzero solution $\mcal{Z}$, then this operator has to be zero as a function.
The contribution from the fourth order pole of $(x_{i}-x_{j})$ requires that either $\kappa_{i}=\kappa_{j}$ or $\kappa_{i}\kappa_{j}=16$ holds.
Similarly, for the contribution from the second order pole of $(x_{i}-x_{j})$ vanishes, we have to have $\lim_{x_{i}\to x}F_{i}(x_{i},\bm{x})=\lim_{x_{j}\to x}F_{j}(x_{j},\bm{x})$ for any $x$. In other words, there exists a function $F_{ij}=F_{ij}(x,x_{1},\dots,\hat{x}_{i},\dots,\hat{x}_{j},\dots, x_{N})$
such that $F_{i}(x_{i},\bm{x})=F_{ij}(x_{i},\bm{x})$ and $F_{j}(x_{j},\bm{x})=F_{ij}(x_{j},\bm{x})$.
It is also obvious that $F_{ij}$ is homogeneous of degree $-2$ so that $\mcal{Z}$ is homogeneous.

As was noted above, the function $\mcal{Z}$ is unique up to multiplication by functions independent of $x_{i}$ and $x_{j}$.
Let $C_{ij}(\bm{x})=C_{ij}(x_{1},\dots, \hat{x}_{i},\dots, \hat{x}_{j},\dots, x_{N})$ be a function independent of $x_{i}$ and $x_{j}$.
Assuming that $\mcal{Z}$ is annihilated by operators $\mcal{Q}_{i}+F_{ij}(x_{i},\bm{x})$ and $\mcal{Q}_{j}+F_{ij}(x_{j},\bm{x})$,
we also require $C_{ij}(\bm{x})\mcal{Z}(\bm{x})$ to be annihilated by them.
Then, we have
\begin{equation*}
	\sum_{k;k\neq i,j}\frac{1}{x_{i}-x_{k}}\left(\del_{x_{k}}C_{ij}\right)(\bm{x})=0.
\end{equation*}
Multiplying $(x_{i}-x_{k})$ for some $k\neq i,j$ and take the limit $x_{i}\to x_{k}$, we see that $\del_{x_{k}}C_{ij}=0$.
Since $k\neq i,j$ is arbitrary, this means that $C_{ij}$ is a constant.
\end{proof}

Theorem \ref{thm:commutation} can be immediately extended to a family of mutually commutative backward Loewner chains.
Recall that, for each $i=1,\dots, N$, $\left(f^{(i)}_{t}(\cdot):t\ge 0\right)$ is an $i$-th backward SLE$(\kappa_{i},b_{i})$.

\begin{cor}
\label{cor:mutual_commutation}
The backward Loewner chains $\left(f^{(i)}_{t}(\cdot):t\ge 0 \right)$, $i=1,\dots, N$ are mutually commutative if and only if the following conditions are satisfied:
\begin{enumerate}
\item 	There exists $\kappa>0$ such that either $\kappa_{i}=\kappa$ or $\kappa_{i}=16/\kappa$ holds for $i=1,\dots, N$.
\item 	There exists a translation invariant and homogeneous function $\mcal{Z}=\mcal{Z}(x_{1},\dots, x_{N})$ on $\mrm{Conf}_{N}(\mbb{R})$
		with the following properties:
		\begin{enumerate}
		\item 	Each function $b_{i}$ is given by $b_{i}=\kappa_{i}\del_{x_{i}}\log\mcal{Z}$, $i=1,\dots, N$.
		\item 	It satisfies $\mcal{D}^{\bm{\kappa}}_{i}\mcal{Z}=0$, $i=1,\dots, N$, where
				\begin{equation*}
					\mcal{D}^{\bm{\kappa}}_{i}=\frac{\kappa_{i}}{2}\del_{x_{i}}^{2}-2\sum_{j;j\neq i}\left(\frac{1}{x_{j}-x_{i}}\del_{x_{j}}-\frac{h_{\kappa_{j}}}{(x_{j}-x_{i})^{2}}\right),\ \ i=1,\dots, N.
				\end{equation*}
		\end{enumerate}
\end{enumerate}
\end{cor}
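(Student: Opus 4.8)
The plan is to reduce the corollary to Theorem~\ref{thm:commutation} applied to every pair $\{i,j\}\subset\{1,\dots,N\}$, reading ``mutually commutative'' as ``pairwise commutative''. \emph{Necessity:} assume the $N$ chains are mutually commutative. Condition (1) is free --- Theorem~\ref{thm:commutation}(1) for the pairs $\{1,i\}$ gives $\kappa_i\in\{\kappa_1,16/\kappa_1\}$ for every $i$, so (1) holds with $\kappa:=\kappa_1$. For (2)(a), Theorem~\ref{thm:commutation}(2) applied to each pair makes (\ref{eq:commutation_rot}) hold for all $i\neq j$, i.e. $\del_{x_i}(b_j/\kappa_j)=\del_{x_j}(b_i/\kappa_i)$; thus the $1$-form $\sum_i(b_i/\kappa_i)\,dx_i$ is closed, and since every component of $\mrm{Conf}_N(\mbb{R})$ is simply connected there is a function $\mcal{Z}$, unique up to a multiplicative constant, with $b_i=\kappa_i\del_{x_i}\log\mcal{Z}$ for every $i$; translation invariance and homogeneity of the $b_i$ make $\mcal{Z}$ translation invariant and homogeneous. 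This $\mcal{Z}$ agrees up to a constant with each pairwise partition function produced by Theorem~\ref{thm:commutation}.

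It remains to show $\mcal{D}^{\bm{\kappa}}_i\mcal{Z}=0$. Fix $i$ and an arbitrary $j\neq i$, and write $\mcal{D}^{\bm{\kappa}}_i=\frac{\kappa_i}{2}\del_{x_i}^2-\sum_{k;k\neq i}\frac{2}{x_k-x_i}\del_{x_k}+\sum_{k;k\neq i}\frac{2h_{\kappa_k}}{(x_k-x_i)^2}$. Comparing with the operator $\mcal{Q}:=\frac{\kappa_i}{2}\del_{x_i}^2-\sum_{k;k\neq i}\frac{2}{x_k-x_i}\del_{x_k}+\frac{2h_{\kappa_j}}{(x_j-x_i)^2}$ from the first equation of Theorem~\ref{thm:commutation}(2)(b) for the pair $\{i,j\}$, one has $\mcal{D}^{\bm{\kappa}}_i=\mcal{Q}+\sum_{k;k\neq i,j}\frac{2h_{\kappa_k}}{(x_k-x_i)^2}$; since $(\mcal{Q}+F_{ij}(x_i,\bm{x}))\mcal{Z}=0$, subtraction gives $\mcal{Z}^{-1}\mcal{D}^{\bm{\kappa}}_i\mcal{Z}=\sum_{k;k\neq i,j}\frac{2h_{\kappa_k}}{(x_k-x_i)^2}-F_{ij}(x_i,\bm{x})$, whose right-hand side contains no $x_j$ (by the form of $F_{ij}$ in Theorem~\ref{thm:commutation}). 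As $j\neq i$ was arbitrary, $\mcal{Z}^{-1}\mcal{D}^{\bm{\kappa}}_i\mcal{Z}$ depends on $x_i$ alone. But $\mcal{D}^{\bm{\kappa}}_i$ sends translation-invariant functions to translation-invariant functions and lowers homogeneity degree by $2$, so $\mcal{Z}^{-1}\mcal{D}^{\bm{\kappa}}_i\mcal{Z}$ is translation invariant --- hence a constant, being a function of $x_i$ only --- and homogeneous of degree $-2$ --- hence $0$. This gives (2)(b).

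\emph{Sufficiency:} assume (1) and (2) and fix a pair $\{i,j\}$. By (1), $\kappa_i,\kappa_j\in\{\kappa,16/\kappa\}$, so $\kappa_i=\kappa_j$ or $\kappa_i\kappa_j=16$, satisfying Theorem~\ref{thm:commutation}(1). By (2)(a), $b_k=\kappa_k\del_{x_k}\log\mcal{Z}$ for $k=i,j$; and setting $F_{ij}(x,x_1,\dots,\hat{x}_i,\dots,\hat{x}_j,\dots,x_N):=\sum_{k;k\neq i,j}\frac{2h_{\kappa_k}}{(x_k-x)^2}$, which is homogeneous of degree $-2$, the equations $\mcal{D}^{\bm{\kappa}}_i\mcal{Z}=0$ and $\mcal{D}^{\bm{\kappa}}_j\mcal{Z}=0$ become precisely the two identities of Theorem~\ref{thm:commutation}(2)(b) once the $k\neq i,j$ summands are moved into the $F_{ij}$ slot. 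Hence the $i$-th and $j$-th chains commute, and as the pair was arbitrary, the family is mutually commutative.

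The one delicate step is the collapsing argument of the second paragraph: one must see that the unknown functions $F_{ij}$ attached to the pairwise BPZ equations are forced, as $j$ ranges over the remaining indices, to make $\mcal{D}^{\bm{\kappa}}_i\mcal{Z}$ a function of $x_i$ alone, after which translation invariance together with homogeneity forces it to vanish. Everything else --- the bookkeeping operator identity $\mcal{D}^{\bm{\kappa}}_i=\mcal{Q}+\sum_{k;k\neq i,j}2h_{\kappa_k}/(x_k-x_i)^2$ and the exactness of $\sum_i(b_i/\kappa_i)\,dx_i$ --- is routine on top of Theorem~\ref{thm:commutation}.
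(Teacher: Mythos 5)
Your proof is correct, and its overall skeleton --- reduce to pairwise applications of Theorem \ref{thm:commutation}, then integrate the closed $1$-form $\sum_{i}\kappa_{i}^{-1}b_{i}\,dx_{i}$ --- matches the paper's. Where you genuinely diverge is the collapsing step that establishes $\mcal{D}^{\bm{\kappa}}_{i}\mcal{Z}=0$. The paper introduces $G_{i}(\bm{x})=\frac{2h_{\kappa_{j}}}{(x_{j}-x_{i})^{2}}+F_{ij}(x_{i},\bm{x})$, shows by induction on $p$ that $G_{i}=\sum_{j;j\neq i}\frac{2h_{\kappa_{j}}}{(x_{j}-x_{i})^{2}}+\tilde{G}_{i}(x_{i})$, uses homogeneity to write $\tilde{G}_{i}=c_{i}/x_{i}^{2}$, and finally kills $c_{i}$ by computing the commutator $\left[\mcal{D}^{\bm{\kappa}}_{i}+\tilde{G}_{i},\mcal{D}^{\bm{\kappa}}_{j}+\tilde{G}_{j}\right]$ and demanding that it annihilate $\mcal{Z}$. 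You instead observe that for each $j\neq i$ the pairwise equation exhibits $\mcal{Z}^{-1}\mcal{D}^{\bm{\kappa}}_{i}\mcal{Z}$ as a quantity free of $x_{j}$, so that varying $j$ it depends on $x_{i}$ alone; translation invariance then makes it constant, and homogeneity of degree $-2$ makes it vanish. This bypasses both the induction and the commutator computation and is cleaner (indeed $c_{i}/x_{i}^{2}$ is not translation invariant unless $c_{i}=0$, so the same observation would have shortened the paper's argument). One small caveat: your justification that the global $\mcal{Z}$ satisfies the pairwise BPZ equation --- that it ``agrees up to a constant with each pairwise partition function'' --- is slightly circular as stated, since the uniqueness-up-to-constant in Theorem \ref{thm:commutation} is relative to a fixed $F_{ij}$, and a priori the global $\mcal{Z}$ could differ from the pairwise one by a non-constant factor independent of $x_{i},x_{j}$. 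What your argument actually uses is only that there exists \emph{some} $F_{ij}$, independent of $x_{j}$ and homogeneous of degree $-2$, with $(\mcal{Q}+F_{ij}(x_{i},\bm{x}))\mcal{Z}=0$, and this follows by rerunning the derivation inside the proof of Theorem \ref{thm:commutation} with the global $\mcal{Z}$ in place of the pairwise one, which is exactly what the paper does; so the gap is cosmetic. Your sufficiency direction, obtained by packaging $\sum_{k;k\neq i,j}2h_{\kappa_{k}}/(x_{k}-x)^{2}$ into the $F_{ij}$ slot, is fine and is in fact more explicit than the paper, which leaves that direction implicit.
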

\begin{proof}
First, it is immediate from Theorem \ref{thm:commutation} that there exists $\kappa>0$ and
either $\kappa_{i}=\kappa$ or $\kappa_{i}=16/\kappa$ holds for every $i=1,\dots, N$.
In the same way as in the proof of Theorem \ref{thm:commutation}, we have
\begin{equation*}
	\kappa_{i}\del_{x_{i}}b_{j}-\kappa_{j}\del_{x_{j}}b_{i}=0, \quad 1\le i<j\le N.
\end{equation*}
This is equivalent to that the one-form $\omega=\sum_{i=1}^{N}\kappa_{i}^{-1}b_{i}dx_{i}$ is closed.
Hence, there exists a function $\mcal{Z}$ such that $d\log \mcal{Z}=\omega$, in other words, $b_{i}=\kappa_{i}\del_{x_{i}}\log \mcal{Z}$, $i=1,\dots, N$.
Furthermore, the function $\mcal{Z}$ satisfies the system of differential equations
\begin{equation*}
	\left(\frac{\kappa_{i}}{2}\del_{x_{i}}^{2}-\sum_{k;k\neq i}\frac{2}{x_{k}-x_{i}}\del_{x_{k}}+\frac{2h_{\kappa_{j}}}{(x_{j}-x_{i})^{2}}+F_{ij}(x_{i},\bm{x})\right)\mcal{Z}=0
\end{equation*}
for every pair $\{i,j\}\subset \{1,\dots, N\}$.
Here, $F_{ij}=F_{ij}(x,x_{1},\dots, \hat{x}_{i},\dots, \hat{x}_{j},\dots, x_{N})$, $\{i,j\}\subset \{1,\dots, N\}$ are the functions taken in Theorem \ref{thm:commutation}.
Thinking of these equations for a fixed $i$, we see that the function
\begin{equation*}
	G_{i}(\bm{x})=\frac{2h_{\kappa_{j}}}{(x_{j}-x_{i})^{2}}+F_{ij}(x_{i},\bm{x})
\end{equation*}
is independent of $j$.
For simplicity of description, we consider $G_{1}(\bm{x})$.
Let us assume that $G_{1}(\bm{x})$ has the form of
\begin{equation}
\label{eq:G_{1}_expansion_induction}
	G_{1}(\bm{x})=\sum_{i=2}^{p}\frac{2h_{\kappa_{i}}}{(x_{i}-x_{1})^{2}}+\tilde{G}_{1,p}(x_{1},x_{p+1},\dots, x_{N})
\end{equation}
with some $p=2,\dots, N$, where $\tilde{G}_{1,p}$ is a function that is independent of $x_{2},\dots, x_{p}$.
Notice that this assumption is valid for $p=2$ by taking $\tilde{G}_{1,2}(x_{1},x_{3},\dots,x_{N})=F_{12}(x_{1},\bm{x})$.
We can equate (\ref{eq:G_{1}_expansion_induction}) to
\begin{equation*}
	G_{1}(\bm{x})=\frac{2h_{\kappa_{p+1}}}{(x_{p+1}-x_{1})^{2}}+F_{1p+1}(x_{1},\bm{x})
\end{equation*}
and find that
\begin{equation*}
	F_{1p+1}(x,\bm{x})-\sum_{i=2}^{p}\frac{2h_{\kappa_{i}}}{(x_{i}-x_{1})^{2}}=\tilde{G}_{1,p}(x_{1},x_{p+1},\dots,x_{N})-\frac{2h_{\kappa_{p+1}}}{(x_{p+1}-x_{1})^{2}}.
\end{equation*}
Here, the left hand side is independent of $x_{p+1}$, hence, the right hand side is equal to some function $\tilde{G}_{1,p+1}(x_{1},x_{p+2},\dots, x_{N})$ that is independent of $x_{2},\dots,x_{p+1}$.
Consequently, from (\ref{eq:G_{1}_expansion_induction}), we have
\begin{equation*}
	G_{1}(\bm{x})=\sum_{i=2}^{p+1}\frac{2h_{\kappa_{i}}}{(x_{i}-x_{1})^{2}}+\tilde{G}_{1,p+1}(x_{1},x_{p+2},\dots, x_{N}),
\end{equation*}
and can invoke the induction in $p$.
Applying the same argument to other $i$'s, we conclude that
\begin{equation*}
	G_{i}(\bm{x})=\sum_{j;j\neq i}^{N}\frac{2h_{\kappa_{j}}}{(x_{j}-x_{i})^{2}}+\tilde{G}_{i}(x_{i}), \quad i=1,\dots, N,
\end{equation*}
where, for each $i=1,\dots, N$, $\tilde{G}_{i}$ is a function only of $x_{i}$.
Since they have to be homogeneous of degree $-2$, we may write them as $\tilde{G}_{i}(x_{i})=\frac{c_{i}}{x_{i}^{2}}$ with constants $c_{i}$, $i=1,\dots, N$.
Therefore, the function $\mcal{Z}$ satisfies
\begin{equation*}
	\left(\mcal{D}^{\bm{\kappa}}_{i}+\frac{c_{i}}{x_{i}^{2}}\right)\mcal{Z}=0,\ \ i=1,\dots, N,
\end{equation*}
and also is annihilated by
\begin{equation*}
	\left[\mcal{D}^{\bm{\kappa}}_{i}+\tilde{G}_{i},\mcal{D}^{\bm{\kappa}}_{j}+\tilde{G}_{j}\right]
	=\frac{-4}{x_{i}-x_{j}}\left(\frac{c_{i}}{x_{i}^{3}}+\frac{c_{j}}{x_{j}^{3}}\right).
\end{equation*}
Therefore, the above function itself has to vanish, which implies $c_{i}=0$, $i=1,\dots, N$.
\end{proof}

\section{Proposal of multiple backward SLE}
\label{sect:multiple_backward_SLE}
Let $N\in\mbb{N}$ and $\kappa>0$ be fixed.
We think of a multiple backward SLE as a special case of a family of mutually commuting Loewner chains considered in Corollary \ref{cor:mutual_commutation},
where $\kappa_{i}=\kappa$, $i=1,\dots, N$ are chosen all to be equal.
We also write $\mcal{D}^{\kappa}_{i}:=\mcal{D}^{(\kappa,\dots,\kappa)}_{i}$, $i=1,\dots, N$ for simplicity.

Note that the system of differential equations $\mcal{D}^{\kappa}_{i}\mcal{Z}=0$, $i=1,\dots, N$ on a function $\mcal{Z}$ is a one of BPZ equations from two-dimensional CFT \cite{BelavinPolyakovZamolodchikov1984}.
Though this is a system of partial differential equations and the space of its smooth solutions is difficult to study,
these BPZ equations only have regular singular points.
Hence, the general theory for partial differential equations with regular singular points \cite[Appendix B]{Knapp1986} can be applied.
In particular, the space of solutions that admit the following properties is finite dimensional:
\begin{enumerate}
\item[(I)] it is analytic in $\mrm{Conf}_{N}(\mbb{R})$.
\item[(II)] it admits the Frobenius expansion; for any pair $\{i,j\}\subset\{1,\dots, N\}$, there exists $\Delta_{ij}\in\mbb{R}$ such that the solution is expanded as
		\begin{equation*}
			\mcal{Z}(\dots, x_{i},\dots, \overset{j}{\check{x_{i}+\varepsilon}},\dots)=\sum_{n=0}^{N}\varepsilon^{\Delta_{ij}+n}\mcal{Z}_{n}(\dots, x_{i},\dots, \overset{j}{\check{x_{i}}},\dots)
		\end{equation*}
		for sufficiently small $\varepsilon>0$.
\end{enumerate}
Furthermore, it is readily seen that with the differential operators $\mcal{D}^{\kappa}_{i}$, $i=1,\dots, N$, each exponent $\Delta_{ij}$, $\{i,j\}\subset\{1,\dots N\}$ is either $-2/\kappa$ or $-(\kappa+6)/\kappa$. Note that these two exponents do not coincide when $\kappa>0$.
Therefore, according to the general theory, a set of exponents $\{\Delta_{ij}\}_{\{i,j\}\subset\{1,\dots, N\}}$ uniquely determines a solution up to multiplicative constants.

\begin{defn}
\label{defn:N_kappa_partition_function}
An $(N,\kappa)$-partition function $\mcal{Z}=\mcal{Z}(x_{1},\dots, x_{N})$
is a translation invariant and homogenous function on $\mrm{Conf}_{N}(\mbb{R})$ that satisfies the system of differential equations
\begin{equation*}
	\mcal{D}^{\kappa}_{i}\mcal{Z}=0,\ \ i=1,\dots, N,
\end{equation*}
and the properties (I) and (II) described above.
\end{defn}

Given an $(N,\kappa)$-partition function $\mcal{Z}=\mcal{Z}(x_{1},\dots, x_{N})$, what follows is a temporary definition of a multiple backward SLE:
\begin{defn}
\label{defn:multiple_backward_SLE_working}
Let $\bm{X}\in\mrm{Conf}_{N}(\mbb{R})$ be an $N$-point configuration and let $\mcal{Z}$ be an $(N,\kappa)$-partition function.
A $\mcal{Z}$-multiple backward SLE$(\kappa)$ starting at $\bm{X}$ is an $N$-tuple of Loewner chains $\left\{\left(f^{(i)}_{t}(\cdot):t\ge 0\right)\right\}_{i=1}^{N}$,
where each $\left(f_{t}^{(i)}(\cdot):t\ge 0\right)$ is an $i$-th backward SLE$(\kappa,b_{i})$ starting at $\bm{X}$ with $b_{i}=\kappa\del_{x_{i}}\log\mcal{Z}$, $i=1,\dots, N$.
\end{defn}

Owing to Corollary \ref{cor:mutual_commutation}, the members of a $\mcal{Z}$-multiple SLE$(\kappa)$ consistently generate $N$ random curves in $\mbb{H}$.

We can see that each flow $\left(f^{(i)}_{t}(\cdot):t\ge 0\right)$, $i=1,\dots, N$ is obtained as a Girsanov transform of a backward SLE$(\kappa)$.
Let $\left(f_{t}(\cdot):t\ge 0 \right)$ be a backward SLE$(\kappa)$, which satisfies
\begin{equation*}
	\frac{d}{dt}f_{t}(z)=-\frac{2}{f_{t}(z)-W_{t}},\quad t\ge 0,\quad f_{0}(z)=z\in\mbb{H},
\end{equation*}
where $W_{t}=\sqrt{\kappa}B_{t}$, $t\ge 0$ with $(B_{t}:t\ge 0)$ being a standard Brownian motion with respect to a probability measure $\mbb{P}$.
For $x\in\mbb{R}$, we write the law of a standard Brownian motion starting at $x$ as $\mbb{P}^{x}$.
Let $\mcal{Z}=\mcal{Z}(x_{1},\dots, x_{N})$ be an $(N,\kappa)$-partition function.
For $\bm{X}\in\mrm{Conf}_{N}(\mbb{R})$ and $i\in \{1,\dots, N\}$, we set 
\begin{align}
\label{eq:martingale_M}
	M^{(i)}_{\bm{X},t}&:=\prod_{j;j\neq i}\pr{f}_{t}(X_{j})^{h_{\kappa}} \mcal{Z}\left(X_{t}^{(1)},\dots, \overset{i}{\check{W}_{t}},\dots, X^{(N)}_{t}\right),\quad t\ge 0,\\
	\frac{d}{dt}X^{(j)}_{t}&=-\frac{2}{X^{(j)}_{t}-W_{t}},\quad t\ge 0,\quad X^{(j)}_{0}=X_{j},\quad j\neq i \notag
\end{align}
and consider the stochastic process $\left(M^{(i)}_{\bm{X},t}:t\ge 0\right)$ under the probability measure $\mbb{P}^{X_{i}}$.
Here, $\pr{f}_{t}(z)$ is the derivative in terms of $z$. Note that $\pr{f}_{t}(X_{j})>0$, $j\neq i$, hence, there is no ambiguity of defining their non-integer powers.

For each $i\in\{1,\dots, N\}$, $n\in\mbb{N}$ and $\bm{X}\in\mrm{Conf}_{N}(\mbb{R})$, we define a stopping time
\begin{equation*}
	\tau^{(i)}_{\bm{X},n}:=\mrm{inf}\left\{t>0\Big|\left|M^{(i)}_{\bm{X},t}\right|>n\right\}.
\end{equation*}

\begin{thm}
\label{thm:Girsanov_transform}
The stochastic process $\left(M^{(i)}_{\bm{X},t}:t\ge 0\right)$ is a local martingale with respect to $\mbb{P}^{X_{i}}$.
For $n\in\mbb{N}$, define a probability measure $\mbb{Q}_{\bm{X},n}^{(i)}$ by
\begin{equation}
\label{eq:transformed_measure}
	\frac{d\mbb{Q}_{\bm{X},n}^{(i)}}{d\mbb{P}^{X_{i}}}:=\lim_{t\to \infty}\frac{M^{(i)}_{\bm{X},t\wedge \tau^{(i)}_{\bm{X},n}}}{M^{(i)}_{\bm{X},0}}.
\end{equation}
Then, the Loewner chain $\left(f_{t}(\cdot):t\ge 0 \right)$ above is the $i$-th Loewner chain $\left(f^{(i)}_{t}(\cdot):t\ge 0\right)$
of a $\mcal{Z}$-multiple backward SLE$(\kappa)$ starting at $\bm{X}$ under probability measure $\mbb{Q}^{(i)}_{\bm{X},n}$
up to the stopping time $\tau^{(i)}_{\bm{X},n}$.
\end{thm}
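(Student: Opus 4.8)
The plan is to verify that $\left(M^{(i)}_{\bm{X},t}:t\ge 0\right)$ is a local martingale by an explicit It{\^o} computation, and then to apply Girsanov's theorem to read off the drift of the driving process under the tilted measure. First I would fix $i$ and compute, using the backward Loewner equation $\frac{d}{dt}f_{t}(z)=-\frac{2}{f_{t}(z)-W_{t}}$ together with the induced equations $\frac{d}{dt}\pr{f}_{t}(z)=\frac{2\pr{f}_{t}(z)}{(f_{t}(z)-W_{t})^{2}}$ and $\frac{d}{dt}X^{(j)}_{t}=-\frac{2}{X^{(j)}_{t}-W_{t}}$, the stochastic differential of the two factors in $M^{(i)}_{\bm{X},t}$. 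The product $\prod_{j\ne i}\pr{f}_{t}(X_{j})^{h_{\kappa}}$ is of finite variation, contributing $\sum_{j\ne i}\frac{2h_{\kappa}}{(X^{(j)}_{t}-W_{t})^{2}}\,dt$ to the logarithmic derivative. For the $\mcal{Z}$ factor, write $\mcal{Z}$ evaluated at $(X^{(1)}_{t},\dots,\overset{i}{\check W_{t}},\dots,X^{(N)}_{t})$; only the $i$-th slot $W_{t}=\sqrt{\kappa}B_{t}$ carries Brownian noise, so It{\^o}'s formula produces a martingale part $\sqrt{\kappa}\,\del_{x_{i}}\mcal{Z}\,dB_{t}$, a drift $\frac{\kappa}{2}\del_{x_{i}}^{2}\mcal{Z}\,dt$ from the quadratic variation of $W_{t}$, and a drift $-\sum_{j\ne i}\frac{2}{X^{(j)}_{t}-W_{t}}\del_{x_{j}}\mcal{Z}\,dt$ from the finite-variation motion of the remaining points. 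Collecting the $dt$-terms, the total drift of $M^{(i)}_{\bm{X},t}$ is $\prod_{j\ne i}\pr{f}_{t}(X_{j})^{h_{\kappa}}\big(\mcal{D}^{\kappa}_{i}\mcal{Z}\big)\big(X^{(1)}_{t},\dots,\overset{i}{\check W_{t}},\dots,X^{(N)}_{t}\big)\,dt$, where $\mcal{D}^{\kappa}_{i}=\frac{\kappa}{2}\del_{x_{i}}^{2}-2\sum_{j\ne i}\big(\frac{1}{x_{j}-x_{i}}\del_{x_{j}}-\frac{h_{\kappa}}{(x_{j}-x_{i})^{2}}\big)$. Since $\mcal{Z}$ is an $(N,\kappa)$-partition function, $\mcal{D}^{\kappa}_{i}\mcal{Z}=0$, so the drift vanishes identically and $M^{(i)}_{\bm{X},t}$ is a local martingale with respect to $\mbb{P}^{X_{i}}$.

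Next I would address the change of measure. For fixed $n$, the stopped process $\big(M^{(i)}_{\bm{X},t\wedge\tau^{(i)}_{\bm{X},n}}:t\ge 0\big)$ is a bounded local martingale (the stopping time ensures $|M^{(i)}_{\bm{X},t\wedge\tau^{(i)}_{\bm{X},n}}|\le n$ up to a bounded jump contribution, so in fact a genuine bounded martingale), hence uniformly integrable and convergent in $L^{1}$; this makes the Radon--Nikod{\'y}m derivative in (\ref{eq:transformed_measure}) well-defined, strictly positive on $\{\tau^{(i)}_{\bm{X},n}>0\}$ after normalising by $M^{(i)}_{\bm{X},0}=\prod_{j\ne i}X_{j}^{0}\cdots=\mcal{Z}(\bm{X})$ up to the power factors which equal $1$ at $t=0$, and defines $\mbb{Q}^{(i)}_{\bm{X},n}$. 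From the martingale part $\sqrt{\kappa}\,\del_{x_{i}}\mcal{Z}\,dB_{t}$ of $M^{(i)}_{\bm{X},t}$ computed above, the logarithmic derivative of the density is $d\log M^{(i)}_{\bm{X},t}=\sqrt{\kappa}\,\frac{\del_{x_{i}}\mcal{Z}}{\mcal{Z}}\,dB_{t}+(\text{finite variation})$ on $t<\tau^{(i)}_{\bm{X},n}$. Girsanov's theorem then gives that under $\mbb{Q}^{(i)}_{\bm{X},n}$, up to $\tau^{(i)}_{\bm{X},n}$, the process $\widehat B_{t}:=B_{t}-\int_{0}^{t}\sqrt{\kappa}\,\frac{\del_{x_{i}}\mcal{Z}}{\mcal{Z}}\big(X^{(1)}_{s},\dots,\overset{i}{\check W_{s}},\dots,X^{(N)}_{s}\big)\,ds$ is a standard Brownian motion, so $W_{t}=\sqrt{\kappa}B_{t}$ satisfies $dW_{t}=\sqrt{\kappa}\,d\widehat B_{t}+\kappa\,\del_{x_{i}}\log\mcal{Z}\,(\cdots)\,dt=\sqrt{\kappa}\,d\widehat B_{t}+b_{i}(\cdots)\,dt$ with $b_{i}=\kappa\,\del_{x_{i}}\log\mcal{Z}$. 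Together with the (unchanged, deterministic-given-$W$) equations $\frac{d}{dt}X^{(j)}_{t}=-\frac{2}{X^{(j)}_{t}-W_{t}}$ for $j\ne i$, this is precisely the defining system of an $i$-th backward SLE$(\kappa,b_{i})$ starting at $\bm{X}$, i.e. the $i$-th Loewner chain of a $\mcal{Z}$-multiple backward SLE$(\kappa)$, which is what we wanted.

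The step I expect to be the main obstacle is the bookkeeping in the It{\^o} computation of the drift of $M^{(i)}_{\bm{X},t}$ and its identification with $\mcal{D}^{\kappa}_{i}\mcal{Z}$: one must track the interaction between the explicit $(x_{j}-x_{i})^{-2}$ terms coming from $d\,\pr{f}_{t}(X_{j})^{h_{\kappa}}$ and the $-2\sum_{j\ne i}\frac{1}{x_{j}-x_{i}}\del_{x_{j}}$ terms coming from the evolution of the tracked points inside $\mcal{Z}$, and check that the powers $h_{\kappa}$ and the sign conventions in the backward equation conspire to reproduce exactly the operator $\mcal{D}^{\kappa}_{i}$ as defined in Corollary \ref{cor:mutual_commutation}. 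A minor additional point is to justify that the $o(\tilde\varepsilon^{2})$-type error control is not needed here because the statement is about a genuine local martingale, not an infinitesimal commutation relation; and to note that the powers $\pr f_{t}(X_{j})^{h_{\kappa}}$ are unambiguous since $\pr f_{t}(X_{j})>0$, as already remarked in the text.
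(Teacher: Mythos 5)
Your proposal is correct and follows essentially the same route as the paper's proof: an It\^o computation identifying the drift of $M^{(i)}_{\bm{X},t}$ with $\prod_{j\neq i}\pr{f}_{t}(X_{j})^{h_{\kappa}}(\mcal{D}^{\kappa}_{i}\mcal{Z})$, which vanishes since $\mcal{Z}$ is an $(N,\kappa)$-partition function, followed by Girsanov--Maruyama to read off the drift $b_{i}=\kappa\,\del_{x_{i}}\log\mcal{Z}$ of the driving process under $\mbb{Q}^{(i)}_{\bm{X},n}$. The only cosmetic slip is the phrase about a ``bounded jump contribution'': $M^{(i)}_{\bm{X},t}$ is continuous, so the stopped process is bounded by $n$ outright.
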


\begin{proof}
By It{\^o}'s formula, we see that
\begin{align*}
	dM^{(i)}_{\bm{X},t}
	=&\prod_{j;j\neq i}\pr{f}_{t}(X_{j})^{h_{\kappa}}(\mcal{D}^{\kappa}_{i}\mcal{Z})\left(X_{t}^{(1)},\dots, \overset{i}{\check{W}_{t}},\dots, X^{(N)}_{t}\right)dt \\
	&+\prod_{j;j\neq i}\pr{f}_{t}(X_{j})^{h_{\kappa}}(\del_{x_{i}}\mcal{Z})\left(X_{t}^{(1)},\dots, \overset{i}{\check{W}_{t}},\dots, X^{(N)}_{t}\right)dW_{t},\quad t\ge 0.
\end{align*}
The assumption that $\mcal{Z}$ is an $(N,\kappa)$-partition function ensures that the stochastic process $\left(M^{(i)}_{\bm{X},t}:t\ge 0\right)$
is a local martingale.
Its increment is also written as
\begin{equation}
\label{eq:Girsanov_martingale_SDE}
	dM^{(i)}_{\bm{X},t}=s^{(i)}_{\bm{X},t}M^{(i)}_{\bm{X},t}dB_{t},\quad t\ge 0,
\end{equation}
where
\begin{equation}
\label{eq:Girsanov_drift_term}
	s^{(i)}_{\bm{X},t}:=\sqrt{\kappa}(\del_{x_{i}}\log \mcal{Z})\left(X_{t}^{(1)},\dots, \overset{i}{\check{W}_{t}},\dots, X^{(N)}_{t}\right),\quad t\ge 0.
\end{equation}
Therefore, by Girsanov--Maruyama's theorem, the stochastic process $\left(B^{(i)}_{n,t}:t\ge 0\right)$ defined by
\begin{equation}
\label{eq:transformed_BM}
	B^{(i)}_{n,t}:=B_{t}-\int_{0}^{t\wedge \tau^{(i)}_{\bm{X},n}}s^{(i)}_{\bm{X},u}du,\quad t\ge 0
\end{equation}
is a Brownian motion starting at $X_{i}$ with respect to $\mbb{Q}_{\bm{X},n}^{(i)}$.
It follows that the backward Loewner chain driven by $(W_{t}:t\ge 0)$ under $\mbb{Q}_{\bm{X},n}^{(i)}$ is an $i$-th backward SLE$(\kappa,b)$
with $b=\kappa\del_{x_{i}}\log\mcal{Z}$ up to the stopping time $\tau^{(i)}_{\bm{X},n}$,
which is the $i$-th flow of an $\mcal{Z}$-multiple backward SLE$(\kappa)$.
\end{proof}

Owing to Theorem \ref{thm:Girsanov_transform}, a $\mcal{Z}$-multiple backward SLE$(\kappa)$ is equivalently defined as follows:
\begin{defn}
\label{defn:multiple_backward_SLE}
Let $\kappa>0$, $N\in\mbb{N}$, $\mcal{Z}$ be an $(N,\kappa)$-partition function and $\bm{X}\in\mrm{Conf}_{N}(\mbb{R})$.
A $\mcal{Z}$-multiple backward SLE$(\kappa)$ starting at $\bm{X}$ is 
a family of probability measures $\left\{\mbb{Q}^{(i)}_{\bm{X},n}:i=1,\dots, N, n\in\mbb{N}\right\}$
each of which is defined by (\ref{eq:transformed_measure}).
\end{defn}

We will work with Definition \ref{defn:multiple_backward_SLE} as a definition of a multiple backward SLE.
When we consider a backward Loewner chain $(f_{t}(\cdot):t\ge 0)$ driven by $(W_{t}=\sqrt{\kappa}B_{t}:t\ge 0)$
with $(B_{t}:t\ge 0)$ governed by $\mbb{Q}^{(i)}_{\bm{X},n}$,
it is just the $i$-th Loewner chain $\left(f^{(i)}_{t}(\cdot):t\ge 0\right)$ of a multiple SLE$(\kappa)$
defined in Definition \ref{defn:multiple_backward_SLE_working} up to the stopping time $\tau^{(i)}_{\bm{X},n}$.

\section{Coupling with GFF}
\label{sect:coupling_GFF}
\subsection{Prelimiaries}
Let us make some preliminaries on free boundary GFF.
Expositions of this subject can be found in \cite{Sheffield2016,Berestycki2016, QianWerner2018}.
Let $D\subsetneq\mbb{C}$ be a simply connected domain and $C^{\infty}_{\nabla}(D)$ be the space of real-valued smooth functions on $D$
with square integrable gradients.
We equip it with the Dirichlet inner product $(\cdot,\cdot)_{\nabla}$ defined by
\begin{equation*}
	(f,g)_{\nabla}:=\frac{1}{2\pi}\int_{D}\nabla f \cdot \nabla g,\quad f,g\in C^{\infty}_{\nabla}(D),
\end{equation*}
and denote the induced norm by $\|\cdot\|_{\nabla}=\sqrt{(\cdot,\cdot)_{\nabla}}$.
Since the subspace $\mcal{N}\subset C^{\infty}_{\nabla}(D)$ of constant functions coincides with the radical of this norm,
the quotient space $C^{\infty}_{\nabla}(D)/\mcal{N}$ is a pre-Hilbert space.
We write $[f]:=f+\mcal{N}$, $f\in C^{\infty}_{\nabla}(D)$.
The Hilbert space completion of $C^{\infty}_{\nabla}(D)/\mcal{N}$ by $(\cdot,\cdot)_{\nabla}$ will be denoted by $W(D)$.

\begin{defn}
A free boundary GFF on $D$ is a collection $\{(H,[f])_{\nabla}|[f]\in W(D)\}$ of centered Gaussian random variables labeled by $W(D)$ such that
\begin{equation*}
	\mbb{E}[(H,[f])_{\nabla}(H,[g])_{\nabla}]=(f,g)_{\nabla},\quad [f], [g]\in W(D).
\end{equation*}
We write $\mcal{P}$ for the probability law for these Gaussian random variables.
\end{defn}
This family of Gaussian random variables is constructed by means of Bochner--Minlos's theorem (see e.g. \cite[Chapter 3]{Hida1980}).
Note that the Dirichlet inner product in the right-hand side is independent of the choice of a representative.

Let $\Delta$ be the Neumann boundary Laplacian on $D$ and $\msf{D}((-\Delta)^{-1})$ be the defining domain of $(-\Delta)^{-1}$ in $W(D)$.
Then, we define $(H,[f])^{\prime}:=2\pi (H,(-\Delta)^{-1}[f])_{\nabla}$, $[f]\in \msf{D}((-\Delta)^{-1})$.
The action of $(-\Delta)^{-1}$ is described by means of Green's function.
For $[f]\in \msf{D}((-\Delta)^{-1})$, we can find a unique representative $f\in [f]$ such that $\int_{D} f=0$.
Then, we have
\begin{equation*}
	(-\Delta)^{-1}[f]=\left[\frac{1}{2\pi}\int_{D}G(z,w)f(w)dw\right],
\end{equation*}
where $G(z,w)$, $z,w\in D$ is Neumann boundary Green's function on $D$.
Motivated by this, we set
\begin{equation*}
	\mcal{C}_{0}(D):=\left\{f\in C^{\infty}_{\nabla}(D)\Bigg|\int_{D}f=0\right\}
\end{equation*}
and
\begin{equation*}
	(H,f):=(H,[f])^{\prime},\quad f\in\mcal{C}_{0}(D).
\end{equation*}
Then, the collection $\{(H,f)|f\in \mcal{C}_{0}(D)\}$ is a one of centered Gaussian random variables such that
\begin{equation*}
	\mbb{E}[(H,f)(H,g)]=\int_{D\times D}f(z)G(z,w)g(w)dzdw,\quad f,g\in \mcal{C}_{0}(D).
\end{equation*}

It is natural to think of $H$ as a random distribution with test functions taken from $\mcal{C}_{0}(D)$
to symbolically write
\begin{equation*}
	(H,f)=\int_{D}H(z)f(z)dz,\quad f\in \mcal{C}_{0}(D).
\end{equation*}
We understand the object $H(z)$, $z\in D$ in this sense and also call $H$ a free boundary GFF on $D$.
The covariance structure is reproduced by the formula
\begin{equation*}
	\mbb{E}[H(z)H(w)]=G(z,w),\quad z,w\in D,\quad z\neq w.
\end{equation*}

\begin{exam}
In the case that $D=\mbb{H}$ is the complex upper half plane, we set
\begin{equation*}
	G_{\mbb{H}}(z,w):=-\log |z-w| -\log |z-\overline{w}|,\ \ z,w\in\mbb{H},\ \ z\neq w
\end{equation*}
as Neumann boundary Green's function on $\mbb{H}$.
\end{exam}

Free boundary GFF plays the role of an ingredient of the Liouville quantum gravity \cite{Polyakov1981a, Polyakov1981b} and a probability theoretical construction of 
Liouville CFT.
This aspect of GFF has been studied extensively \cite{DuplantierSheffield2009,DuplantierSheffield2011,DuplantierMillerSheffield2014,RhodesVargas2017,DavidKupiainenRhodesVargas2016,GuillarmouRhodesVargas2016,DavidRhodesVargas2016,GwynneMillerSheffield2017,HuangRhodesVargas2018,KupiainenRhodesVargas2019}.

\subsection{SLE/GFF-coupling}
Let us begin with a definition of boundary perturbation for free boundary GFF.
Here we fix $N\in\mbb{N}$.
\begin{defn}
Let $u(\cdot;x_{1},\dots, x_{N})=u(z;x_{1},\dots, x_{N})$ be a harmonic function of $z\in\mbb{H}$
with additional parameters $(x_{1},\dots, x_{N})\in\mrm{Conf}_{N}(\mbb{R})$.
We say that $u(\cdot;x_{1},\dots, x_{N})$ is a boundary perturbation for free boundary GFF if the following conditions are satisfied:
\begin{description}
\item[Translation invariance]
		For any $a\in\mbb{R}$,
		\begin{equation*}
			u(z+a;x_{1}+a,\dots, x_{N}+a)\equiv u(z;x_{1},\dots, x_{N}),\quad z\in\mbb{H},\quad (x_{1},\dots, x_{N})\in\mrm{Conf}_{N}(\mbb{R})
		\end{equation*}
		modulo additive constants.
\item[Scale invariance]
		For any $\lambda>0$,
		\begin{equation*}
			u(\lambda z; \lambda x_{1},\dots, \lambda x_{N})\equiv u(z;x_{1},\dots, x_{N}),\quad z\in\mbb{H},\quad (x_{1},\dots, x_{N})\in \mrm{Conf}_{N}(\mbb{R})
		\end{equation*}
		modulo additive constants.
\end{description}
\end{defn}

For a boundary perturbation $u(\cdot;x_{1},\dots, x_{N})$,
one can think of a random distribution $H_{(u,\bm{X})}:=H+u(\cdot;X_{1},\dots, X_{N})$ on $\mbb{H}$,
where $H$ is a free boundary GFF on $\mbb{H}$ and $\bm{X}=(X_{1},\dots, X_{N})\in\mrm{Conf}_{N}(\mbb{R})$.
We call the above $H_{(u,\bm{X})}$ a $(u,\bm{X})$-perturbed free boundary GFF.
Note that a free boundary GFF $H$ and a $(u,\bm{X})$-perturbed free boundary GFF $H_{(u,\bm{X})}$
cannot be distinguished by test functions in the {\it bulk}.
In fact, since $u=u(z;X_{1},\dots, X_{N})$ is harmonic in $z\in\mbb{H}$,
for a test function $f\in\mcal{C}_{0}(\mbb{H})$ that is supported in $\mbb{H}$, we have
$(H,f)=(H_{(u,\bm{X})},f)$ a.s.

Suppose that an $(N,\kappa)$-partition function $\mcal{Z}=\mcal{Z}(x_{1},\dots, x_{N})$ is given.
Let $\mbb{P}$ be the law of a backward SLE$(\kappa)$ $\left(f_{t}(\cdot):t\ge 0\right)$ that is independent of a GFF
and let $\left\{\mbb{Q}^{(i)}_{\bm{X}}:i=1,\dots, N\right\}$ be the family of laws of a $\mcal{Z}$-multiple backward SLE$(\kappa)$
starting at $\bm{X}\in\mrm{Conf}_{N}(\mbb{R})$ defined in (\ref{eq:transformed_measure}).
For each $i\in\{1,\dots, N\}$, we consider the stochastic distribution $\left(\mfrak{h}_{\bm{X},t}^{(i)}:t\ge 0\right)$ defined by
\begin{align*}
	\mfrak{h}_{\bm{X},t}^{(i)}(z)&:=u\left(f_{t}(z);X^{(1)}_{t},\dots, \overset{i}{\check{W_{t}}}, \dots, X^{(N)}_{t}\right) + Q\log |\pr{f}_{t}(z)|,\quad t\ge 0,\quad z\in\mbb{H},\\
	\frac{d}{dt}X^{(j)}_{t}&=-\frac{2}{X^{(j)}_{t}-W_{t}},\quad t\ge 0,\quad X^{(j)}_{0}=X_{j},\quad j\neq i,
\end{align*}
where $W_{t}=\sqrt{\kappa}B_{t}$, $t\ge 0$ with $(B_{t}:t\ge 0)$ being a $\mbb{P}^{X_{i}}$-Brownian motion
and we set $Q=\frac{2}{\gamma}+\frac{\gamma}{2}$, $\gamma \in (0,2]$.

\begin{defn}
We say that the $\mcal{Z}$-multiple backward SLE is coupled with a $(u,\bm{X})$-perturbed free boundary GFF $H_{(u,\bm{X})}$ with coupling constant $\gamma$
if, for all $i\in \{1,\dots, N\}$ and $n\in\mbb{N}$,
the stochastic distribution $\left(\mfrak{h}_{\bm{X},t}^{(i)}:t\ge 0\right)$ is a $\mbb{Q}^{(i)}_{\bm{X},n}$-local martingale
with cross variation given by
\begin{equation*}
	d\left[\mfrak{h}^{(i)}_{\bm{X}}(z),\mfrak{h}^{(i)}_{\bm{X}}(w)\right]_{t}=-dG_{t}(z,w),\quad t\ge 0,\quad z,w\in\mbb{H},
\end{equation*}
where $G_{t}(z,w):=G_{\mbb{H}}(f_{t}(z),f_{t}(w))$, $t\ge 0$, $z,w\in\mbb{H}$, $z\neq w$ with
$(f_{t}(\cdot):t\ge 0)$ being a Loewner chain obeying $\mbb{Q}^{(i)}_{\bm{X},n}$.
\end{defn}

This definition is motivated by the following fact.
For each $i\in\{1,\dots, N\}$, let us set
\begin{equation*}
	\mfrak{p}^{(i)}_{\bm{X},t}:=\mfrak{h}^{(i)}_{\bm{X},t}+H\circ f_{t},\quad t\ge 0.
\end{equation*}
Note that $\mfrak{p}_{\bm{X},0}^{(i)}=H_{(u,\bm{X})}$ regardless of $i\in \{1,\dots, N\}$.

\begin{prop}
Suppose that a $\mcal{Z}$-multiple backward SLE$(\kappa)$ starting at $\bm{X}\in\mrm{Conf}_{N}(\mbb{R})$
is coupled with a $(u,\bm{X})$-perturbed free boundary GFF $H_{(u,\bm{X})}$ with coupling constant $\gamma$
and let $\left(\mfrak{p}_{\bm{X},t}^{(i)}:t\ge 0\right)$, $i\in \{1,\dots, N\}$ be as above.
Then, at each time $t\ge 0$, the law of $\mfrak{p}_{\bm{X},t}^{(i)}$ under $\mcal{P}\otimes \mbb{Q}^{(i)}_{\bm{X},n}$ is identical to that of $H_{(u,\bm{X})}$
under $\mcal{P}$ for every $i=1,\dots, N$ and $n\in\mbb{N}$.
\end{prop}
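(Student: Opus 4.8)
\emph{Proof strategy.} The plan is to compare characteristic functionals. Since the law of a random distribution is determined by its characteristic functional, it suffices to show that for every test function $f\in\mcal{C}_{0}(\mbb{H})$,
\begin{equation*}
	\mbb{E}_{\mcal{P}\otimes\mbb{Q}^{(i)}_{\bm{X},n}}\Bigl[e^{\sqrt{-1}\,(\mfrak{p}^{(i)}_{\bm{X},t},f)}\Bigr]=\mbb{E}_{\mcal{P}}\Bigl[e^{\sqrt{-1}\,(H_{(u,\bm{X})},f)}\Bigr],
\end{equation*}
and then to run the same computation with $f$ replaced by a finite linear combination $\sum_{k}\theta_{k}f_{k}$ in order to match all finite-dimensional marginals. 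First I would split $(\mfrak{p}^{(i)}_{\bm{X},t},f)=(\mfrak{h}^{(i)}_{\bm{X},t},f)+(H\circ f_{t},f)$. Under $\mcal{P}\otimes\mbb{Q}^{(i)}_{\bm{X},n}$ the free boundary GFF $H$ is independent of the backward Loewner chain, so conditioning on the filtration generated by the driving process makes $(\mfrak{h}^{(i)}_{\bm{X},t},f)$ measurable while $(H\circ f_{t},f)$ remains centered Gaussian; by the conformal change-of-variables rule for free boundary GFF its conditional variance is $v_{t}(f):=\int_{\mbb{H}\times\mbb{H}}G_{t}(z,w)f(z)f(w)\,dz\,dw\ge 0$, with $G_{t}(z,w)=G_{\mbb{H}}(f_{t}(z),f_{t}(w))$. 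Hence the left-hand side equals $\mbb{E}_{\mbb{Q}^{(i)}_{\bm{X},n}}[N_{t}]$ for $N_{t}:=\exp\bigl(\sqrt{-1}\,(\mfrak{h}^{(i)}_{\bm{X},t},f)-\tfrac12 v_{t}(f)\bigr)$, and since $f_{0}=\mrm{id}$ and $u(\cdot;\bm{X})$ is deterministic the right-hand side equals $N_{0}=\exp\bigl(\sqrt{-1}\,(u(\cdot;\bm{X}),f)-\tfrac12 v_{0}(f)\bigr)$.

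It therefore remains to show that $\bigl(N_{t}:t\ge 0\bigr)$ is a $\mbb{Q}^{(i)}_{\bm{X},n}$-martingale. For fixed $z\in\mbb{H}$ the maps $t\mapsto f_{t}(z)$ and $t\mapsto\log|\pr{f}_{t}(z)|$ are $C^{1}$ (the backward Loewner flow keeps $f_{t}(z)$ away from $\mbb{R}$ for $z$ in the bulk), so $t\mapsto v_{t}(f)$ has finite variation; by the coupling hypothesis $A_{t}:=(\mfrak{h}^{(i)}_{\bm{X},t},f)$ is a $\mbb{Q}^{(i)}_{\bm{X},n}$-local martingale, and integrating the prescribed cross-variation $d[\mfrak{h}^{(i)}_{\bm{X}}(z),\mfrak{h}^{(i)}_{\bm{X}}(w)]_{t}=-dG_{t}(z,w)$ against $f(z)f(w)$ gives $d[A]_{t}=-dv_{t}(f)$. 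Itô's formula then yields
\begin{equation*}
	dN_{t}=N_{t}\Bigl(\sqrt{-1}\,dA_{t}-\tfrac12\,dv_{t}(f)-\tfrac12\,d[A]_{t}\Bigr)=\sqrt{-1}\,N_{t}\,dA_{t},
\end{equation*}
the finite-variation terms cancelling by $d[A]_{t}=-dv_{t}(f)$. Thus $N_{t}$ is a local martingale, and since $|N_{t}|=e^{-v_{t}(f)/2}\le 1$ it is a bounded, hence genuine, martingale; therefore $\mbb{E}_{\mbb{Q}^{(i)}_{\bm{X},n}}[N_{t}]=N_{0}$, which is exactly the desired identity of characteristic functionals.

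I expect the main obstacle to be the second step: making precise the sense in which the free boundary GFF is pulled back by the conformal map $f_{t}:\mbb{H}\to\mbb{H}\backslash K_{t}$ and verifying that the conditional law of $(H\circ f_{t},f)$ is centered Gaussian with variance $v_{t}(f)$. This relies on the conformal covariance of the Neumann Green's function $G_{\mbb{H}}$ together with some care about the fact that a free boundary GFF is defined only modulo additive constants — an ambiguity that is harmless here precisely because the test functions lie in $\mcal{C}_{0}(\mbb{H})$ and so annihilate constants. A secondary technical point is to confirm that $A_{t}$ and $v_{t}(f)$ are bona fide semimartingale, respectively finite-variation, processes so that Itô's formula applies to $N_{t}$; this follows from the smoothness in $t$ of $f_{t}(z)$ and $\log|\pr{f}_{t}(z)|$ for $z$ in the bulk, together with the coupling assumption that the drift of $\mfrak{h}^{(i)}_{\bm{X},t}$ vanishes.
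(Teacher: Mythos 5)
Your proposal is correct and follows essentially the same route as the paper: both reduce the claim to an identity of characteristic functionals, condition on the driving process to replace $(H\circ f_{t},\rho)$ by the Gaussian factor $e^{-\frac{1}{2}E_{t}(\rho)}$, and then use the prescribed cross-variation $d[(\mfrak{h}^{(i)}_{\bm{X}},\rho)]_{t}=-dE_{t}(\rho)$ to recognize the remaining expression as the expectation of an exponential (local) martingale equal to its initial value. Your observation that $|N_{t}|=e^{-v_{t}(f)/2}\le 1$ upgrades the local martingale to a true martingale is a small but welcome justification that the paper leaves implicit.
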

\begin{proof}
Firstly, we note that a $(u,\bm{X})$-perturbed free boundary GFF gives
Gaussian random variables $(H_{(u,\bm{X})},\rho)$, $\rho\in \mcal{C}_{0}(\mbb{H})$ with mean being shifted by $(u(\cdot;\bm{X}),\rho)$
and variance
\begin{equation*}
	E(\rho)=\int_{\mbb{H}\times\mbb{H}}\rho(z)G_{\mbb{H}}(z,w)\rho(w)dzdw.
\end{equation*}
Therefore, we have
\begin{equation*}
	\mbb{E}\left[e^{\sqrt{-1}\zeta(H_{(u,\bm{X})},\rho)}\right]=e^{\sqrt{-1}\zeta (u(\cdot;\bm{X}),\rho)-\frac{\zeta^{2}}{2}E(\rho)},\quad \rho\in \mcal{C}_{0}(\mbb{H}),\quad \zeta\in\mbb{R}.
\end{equation*}

Let $(\mcal{F}_{t})_{t\ge 0}$ be the filtration associated with a $\mbb{P}^{X_{i}}$-Brownian motion $(B_{t}:t\ge 0)$.
Then, we have
\begin{align*}
	\mbb{E}\left[e^{\sqrt{-1}\zeta(\mfrak{p}^{(i)}_{\bm{X},t},\rho)}\right]
	&=\mbb{E}\left[e^{\sqrt{-1}\zeta (\mfrak{h}^{(i)}_{\bm{X},t},\rho)}\mbb{E}\left[e^{\sqrt{-1}\zeta (H\circ f_{t},\rho)}\Big|\mcal{F}_{t}\right]\right] \\
	&=\mbb{E}\left[e^{\sqrt{-1}\zeta (\mfrak{h}^{(i)}_{\bm{X},t},\rho)-\frac{\zeta^{2}}{2}E_{t}(\rho)}\right],\quad \rho\in\mcal{C}_{0}(\mbb{H}),\quad \zeta\in\mbb{R}
\end{align*}
where we set
\begin{equation*}
	E_{t}(\rho)=\int_{\mbb{H}\times\mbb{H}}\rho(z)G_{t}(z,w)\rho(w)dzdw,\quad \rho\in \mcal{C}_{0}(\mbb{H}).
\end{equation*}
By assumption, we have the quadratic variation of $\left((\mfrak{h}^{(i)}_{\bm{X},t},\rho): t\geq 0\right)$ as $d\left[(\mfrak{h}^{(i)}_{\bm{X}},\rho)\right]_{t}=-dE_{t}(\rho)$, $t\ge 0$, which gives
\begin{equation*}
	\left[(\mfrak{h}^{(i)}_{\bm{X}},\rho)\right]_{t}=-E_{t}(\rho)+E(\rho),\quad t\ge 0.
\end{equation*}
This leads to
\begin{align*}
	\mbb{E}\left[e^{\sqrt{-1}\zeta(\mfrak{p}^{(i)}_{\bm{X},t},\rho)}\right]
	&=e^{-\frac{\zeta^{2}}{2}E(\rho)}\mbb{E}\left[e^{\sqrt{-1}\zeta (\mfrak{h}^{(i)}_{\bm{X},t},\rho)+\frac{\zeta^{2}}{2}[(\mfrak{h}^{(i)}_{\bm{X}},\rho)]_{t}}\right],
\end{align*}
where $\left(e^{\sqrt{-1}\zeta (\mfrak{h}^{(i)}_{\bm{X},t},\rho)+\frac{\zeta^{2}}{2}[(\mfrak{h}^{(i)}_{\bm{X}},\rho)]_{t}}:t\ge 0\right)$ is a martingale.
Therefore, we have
\begin{equation*}
	\mbb{E}\left[e^{\sqrt{-1}\zeta(\mfrak{p}^{(i)}_{\bm{X},t},\rho)}\right]=e^{\sqrt{-1}\zeta (u(\cdot;\bm{X}),\rho)-\frac{\zeta^{2}}{2}E(\rho)}
	=\mbb{E}\left[e^{\sqrt{-1}\zeta (H_{(u,\bm{X})},\rho)}\right],\quad  \rho\in \mcal{C}_{0}(\mbb{H}), \quad \zeta\in\mbb{R},
\end{equation*}
which gives the desired result.
\end{proof}
This proposition is interpreted in terms of conformal welding of quantum surfaces \cite{Sheffield2016,KatoriKoshida2020a}.
Indeed, the Loewner chain $\left(f_{t}(\cdot):t\ge 0 \right)$ under the law $\mbb{Q}^{(i)}_{\bm{X},n}$ gives the welding map around the $i$-th point $X_{i}$
up to the stopping time $\tau^{(i)}_{\bm{X},n}$.

The main theorem goes as follows.
\begin{thm}
\label{thm:coupling_constraint}
Let $N\in\mbb{N}$, $\kappa>0$, $\mcal{Z}$ be an $(N,\kappa)$-partition function and $u=u(z;x_{1},\dots, x_{N})$ be a boundary perturbation for free boundary GFF.
A $\mcal{Z}$-multiple backward SLE$(\kappa)$ starting at $\bm{X}\in\mrm{Conf}_{N}(\mbb{R})$
is coupled with a $(u,\bm{X})$-perturbed free boundary GFF $H_{(u,\bm{X})}$ with coupling constant $\gamma$
for an arbitrary initial condition $\bm{X}\in\mrm{Conf}_{N}(\mbb{R})$
if and only if the following conditions are satisfied:
\begin{enumerate}
\item 	The relation between parameters $\sqrt{\kappa}=\gamma$ or $\sqrt{\kappa}=4/\gamma$ holds.
\item 	The $(N,\kappa)$-partition function is given by
		\begin{equation*}
			\mcal{Z}(x_{1},\dots, x_{N})=\prod_{1\le i<j\le N}|x_{i}-x_{j}|^{-2/\kappa}
		\end{equation*}
		up to multiplicative constants.
\item 	The boundary perturbation $u(\cdot;x_{1},\dots, x_{N})$ is given by
		\begin{equation*}
			u(z;x_{1},\dots,x_{N})=\frac{2}{\sqrt{\kappa}}\sum_{i=1}^{N}\log |z-x_{i}|,\quad z\in\mbb{H}
		\end{equation*}
		up to additive constants.
\end{enumerate}
\end{thm}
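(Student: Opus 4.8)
The plan is to fix $i\in\{1,\dots,N\}$ and compute the It{\^o} differential of the stochastic distribution $\bigl(\mfrak{h}^{(i)}_{\bm{X},t}(z):t\ge 0\bigr)$ under $\mbb{Q}^{(i)}_{\bm{X},n}$, then to translate the two coupling requirements into pointwise identities. For a fixed realisation of the driving process the maps $f_{t}(z),\pr{f}_{t}(z)$ and the auxiliary points $X^{(j)}_{t}$, $j\neq i$, all evolve by the finite-variation Loewner ODEs $\tfrac{d}{dt}f_{t}(z)=-\tfrac{2}{f_{t}(z)-W_{t}}$, $\tfrac{d}{dt}\pr{f}_{t}(z)=\tfrac{2\pr{f}_{t}(z)}{(f_{t}(z)-W_{t})^{2}}$ and $\tfrac{d}{dt}X^{(j)}_{t}=-\tfrac{2}{X^{(j)}_{t}-W_{t}}$, so the only source of quadratic variation is $W_{t}=\sqrt{\kappa}B^{(i)}_{n,t}+\int_{0}^{t\wedge\tau^{(i)}_{\bm{X},n}}\kappa(\del_{x_{i}}\log\mcal{Z})\,du$, whose martingale part is $\sqrt{\kappa}B^{(i)}_{n}$. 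Applying It{\^o}'s formula in Wirtinger form (so that from the finite-variation inputs only first-order terms survive, while $W_{t}$ also contributes a second-order term $\tfrac{\kappa}{2}\del_{x_{i}}^{2}u$), one gets $d\mfrak{h}^{(i)}_{\bm{X},t}(z)=(\text{drift})\,dt+\sqrt{\kappa}\,(\del_{x_{i}}u)\bigl(f_{t}(z);\dots\bigr)\,dB^{(i)}_{n,t}$, where the drift collects the five terms coming from the motion of $f_{t}(z)$, of the $X^{(j)}_{t}$, the drift and the quadratic variation of $W_{t}$, and the $Q\log|\pr{f}_{t}|$ term. Thus the local-martingale requirement is equivalent to the vanishing of the drift, and --- computing $dG_{t}(z,w)$ directly from the Loewner flow and using $\tfrac{1}{a}+\tfrac{1}{\bar a}=2\,\mrm{Re}\tfrac1a$ --- the cross-variation requirement becomes $\kappa\,(\del_{x_{i}}u)\bigl(f_{t}(z);\dots\bigr)(\del_{x_{i}}u)\bigl(f_{t}(w);\dots\bigr)=4\,\mrm{Re}\tfrac{1}{f_{t}(z)-W_{t}}\,\mrm{Re}\tfrac{1}{f_{t}(w)-W_{t}}$.

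Next I would evaluate both identities at $t=0$; since $\bm{X}\in\mrm{Conf}_{N}(\mbb{R})$ is arbitrary this loses no information and turns them into functional identities in $z,w\in\mbb{H}$ and in $\bm{X}$. From the cross-variation identity, separating the $z$- and $w$-dependence forces $\del_{x_{i}}u(z;\bm{x})=\lambda_{i}\,\mrm{Re}\tfrac{1}{z-x_{i}}$ with $\kappa\lambda_{i}^{2}=4$, and integrating (together with the fact that a translation- and scale-invariant harmonic function of $z$ alone is constant) gives $u(z;\bm{x})=-\sum_{k}\lambda_{k}\log|z-x_{k}|$ up to an additive constant. Substituting this $u$ into the drift equation for index $i$ and decomposing in partial fractions of $\zeta:=f_{0}(z)=z$: the double pole at $\zeta=x_{i}$, fed by the $k=i$ part of $\del_{z}u$, by $\tfrac{\kappa}{2}\del_{x_{i}}^{2}u$ and by the $2Q\,\mrm{Re}\tfrac{1}{(\zeta-x_{i})^{2}}$ coming from $Q\log|\pr{f}_{t}|$, yields $2Q=-\lambda_{i}\bigl(2+\tfrac{\kappa}{2}\bigr)$; since $Q=\tfrac{2}{\gamma}+\tfrac{\gamma}{2}>0$ this forces $\lambda_{i}=-\tfrac{2}{\sqrt{\kappa}}$ and hence, as $\tfrac{2}{\sqrt{\kappa}}+\tfrac{\sqrt\kappa}{2}=\tfrac2\gamma+\tfrac\gamma2$, the relation $\sqrt{\kappa}=\gamma$ or $\sqrt{\kappa}=4/\gamma$; running the same argument for every index gives $u(z;\bm{x})=\tfrac{2}{\sqrt{\kappa}}\sum_{k}\log|z-x_{k}|$. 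The residue of the drift equation at the simple pole $\zeta=x_{i}$ then reads $\kappa\del_{x_{i}}\log\mcal{Z}=-2\sum_{k\neq i}\tfrac{1}{x_{i}-x_{k}}$ for each $i$, which integrates to $\mcal{Z}(x_{1},\dots,x_{N})=\prod_{k<\ell}|x_{k}-x_{\ell}|^{-2/\kappa}$ up to a multiplicative constant; this function is translation invariant and homogeneous, and a direct computation shows it is annihilated by each $\mcal{D}^{\kappa}_{i}$, so it is a genuine $(N,\kappa)$-partition function. Finally, one checks that the remaining poles (at $\zeta=x_{j}$, $j\neq i$) and the regular part of the drift equation then vanish automatically; this simultaneously verifies the converse: with the stated $\kappa$--$\gamma$ relation, $\mcal{Z}$ and $u$, the drift is identically zero and the cross variation equals $-dG_{t}$, so the coupling holds for every $\bm{X}$.

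I expect the main obstacle to be the bookkeeping of the It{\^o} computation and the partial-fraction analysis: one must handle carefully the interplay of holomorphic and real derivatives of the harmonic perturbation $u$, make sure the contribution $2Q\,\mrm{Re}\tfrac{1}{(\zeta-x_{i})^{2}}$ of the $Q\log|\pr{f}_{t}|$ term is balanced against the second-order It{\^o} term $\tfrac{\kappa}{2}\del_{x_{i}}^{2}u$ at the double pole, and recognise where the BPZ equations $\mcal{D}^{\kappa}_{i}\mcal{Z}=0$ are actually used (namely only to confirm that the $\mcal{Z}$ produced by the residue computation is admissible). One must also justify the passage from an equality that a priori holds only along the flow to a functional identity in $\zeta$, which is exactly where the arbitrariness of $\bm{X}$ --- equivalently, real-analyticity in the relevant variables --- enters.
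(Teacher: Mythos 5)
Your proposal is correct and follows essentially the same route as the paper's proof: the cross-variation condition pins down $\del_{x_{i}}u$ up to a sign (this is the content of Lemma \ref{lem:coupling_condition}), and the vanishing of the drift becomes a functional identity in $z$ whose double pole at $z=x_{i}$ fixes the sign and the relation $2Q=\sqrt{\kappa}+4/\sqrt{\kappa}$, while the simple pole yields $\kappa\del_{x_{i}}\log\mcal{Z}=-2\sum_{j;j\neq i}(x_{i}-x_{j})^{-1}$ and hence the product formula. The only organizational differences are that the paper applies It{\^o}'s formula to the product $N^{(i)}_{\bm{X},t}=\tilde{\mfrak{h}}^{(i)}_{\bm{X},t}M^{(i)}_{\bm{X},t}$ of the holomorphic extension with the Radon--Nikod{\'y}m martingale under $\mbb{P}^{X_{i}}$ rather than working directly under $\mbb{Q}^{(i)}_{\bm{X},n}$, and it identifies $\mcal{Z}$ from its Frobenius exponents via the uniqueness theory for equations with regular singular points, whereas you integrate the resulting first-order gradient system directly --- both variants are valid and lead to the same conclusion.
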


Before proving Theorem \ref{thm:coupling_constraint}, let us note the following fact.
\begin{lem}
\label{lem:coupling_condition}
A $\mcal{Z}$-multiple backward SLE$(\kappa)$ starting at $\bm{X}\in\mrm{Conf}_{N}(\mbb{R})$ is coupled with $H_{(u,\bm{X})}$ with coupling constant $\gamma$
if and only if there exists a sequence $\bm{\epsilon}=\left(\epsilon_{i}\in\{\pm1 \}:i=1,\dots N\right)$ such that,
for each $i\in\{1,\dots, N\}$, the increment of $\left(\mfrak{h}^{(i)}_{\bm{X},t}:t\ge 0\right)$ is given by
\begin{equation}
\label{eq:increment_h_equivalent_to_coupling}
	d\mfrak{h}^{(i)}_{\bm{X},t}(z)=\mrm{Re}\frac{2\epsilon_{i}}{f_{t}(z)-W_{t}}dB^{(i)}_{n,t},\quad t\ge 0,\quad z\in\mbb{H},
\end{equation}
for every $n\in\mbb{N}$,
where $\left(B^{(i)}_{n,t}:t\ge 0\right)$ is a $\mbb{Q}^{(i)}_{\bm{X},n}$-Brownian motion defined by (\ref{eq:transformed_BM}).
\end{lem}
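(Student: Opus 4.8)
The plan is to compute the semimartingale decomposition of $t\mapsto\mfrak{h}^{(i)}_{\bm{X},t}(z)$, for a fixed $z\in\mbb{H}$, under the transformed measure $\mbb{Q}^{(i)}_{\bm{X},n}$, and to match it against the two requirements in the definition of coupling: being a local martingale and having the prescribed bracket. Since $u(\cdot\,;\cdot)$ is a genuine harmonic function and $\log|\pr{f}_t(z)|=\mrm{Re}\log\pr{f}_t(z)$ is smooth in $z$, the object $\mfrak{h}^{(i)}_{\bm{X},t}$ is an honest function of $z$, so everything can be carried out pointwise in $z$. Along the backward Loewner flow the maps $t\mapsto f_t(z)$, $t\mapsto\pr{f}_t(z)$ and $t\mapsto X^{(j)}_t$ ($j\neq i$) are absolutely continuous, so only the $i$-th slot, carrying $W_t$, contributes to the martingale part. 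Recalling from Theorem \ref{thm:Girsanov_transform} that, up to $\tau^{(i)}_{\bm{X},n}$, $dW_t=\sqrt{\kappa}\,dB^{(i)}_{n,t}+\kappa\,(\del_{x_i}\log\mcal{Z})\!\left(X^{(1)}_t,\dots,\overset{i}{\check{W}_t},\dots,X^{(N)}_t\right)dt$, It{\^o}'s formula gives
\begin{equation*}
	d\mfrak{h}^{(i)}_{\bm{X},t}(z)=A^{(i)}_{\bm{X},t}(z)\,dt+\sqrt{\kappa}\,(\del_{x_i}u)\!\left(f_t(z);X^{(1)}_t,\dots,\overset{i}{\check{W}_t},\dots,X^{(N)}_t\right)dB^{(i)}_{n,t},
\end{equation*}
where the drift $A^{(i)}_{\bm{X},t}(z)$ gathers the Loewner drift of $u$ through its first argument, the drift through the $i$-th argument, the It{\^o} correction $\tfrac{\kappa}{2}\del_{x_i}^2u$, and $Q\tfrac{d}{dt}\log|\pr{f}_t(z)|=Q\,\mrm{Re}\tfrac{2}{(f_t(z)-W_t)^2}$; its precise form will be needed only later, in the proof of Theorem \ref{thm:coupling_constraint}.

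Next I would compute the target bracket. As $G_t(z,w)=-\log|f_t(z)-f_t(w)|-\log|f_t(z)-\overline{f_t(w)}|$ depends on $t$ only through the absolutely continuous maps $f_t$, it has no martingale part; differentiating along the backward flow and simplifying by $-\tfrac{2}{\zeta-W}+\tfrac{2}{\xi-W}=\tfrac{2(\zeta-\xi)}{(\zeta-W)(\xi-W)}$ together with the reality of $\tfrac{1}{\xi-W}+\tfrac{1}{\overline{\xi}-W}=2\,\mrm{Re}\tfrac{1}{\xi-W}$ yields
\begin{equation*}
	-\frac{d}{dt}G_t(z,w)=\left(\mrm{Re}\frac{2}{f_t(z)-W_t}\right)\left(\mrm{Re}\frac{2}{f_t(w)-W_t}\right),\quad t\ge 0.
\end{equation*}
Writing $\sigma_t(z):=\sqrt{\kappa}\,(\del_{x_i}u)(f_t(z);\dots)$ for the diffusion coefficient above and $g_t(z):=\mrm{Re}\tfrac{2}{f_t(z)-W_t}$, the cross-variation requirement $d[\mfrak{h}^{(i)}_{\bm{X}}(z),\mfrak{h}^{(i)}_{\bm{X}}(w)]_t=-dG_t(z,w)$ becomes the identity $\sigma_t(z)\sigma_t(w)=g_t(z)g_t(w)$ for all $z,w\in\mbb{H}$, while the local-martingale requirement becomes $A^{(i)}_{\bm{X},t}(z)\equiv 0$.

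To finish, I would convert these two conditions into the form (\ref{eq:increment_h_equivalent_to_coupling}). The rank-one identity $\sigma_t(z)\sigma_t(w)=g_t(z)g_t(w)$, valid for all $z,w$, forces $\sigma_t(z)=\epsilon_i(t)\,g_t(z)$ for every $z$, with $\epsilon_i(t)\in\{\pm1\}$, because $g_t(\cdot)$ is the real part of the nonconstant holomorphic map $z\mapsto\tfrac{2}{f_t(z)-W_t}$ and hence not identically zero. Since $\sigma_t(z)$ and $g_t(z)$ are jointly continuous in $(t,z)$, and $g_t(z)\neq0$ whenever $|\mrm{Re}\,z|$ is large (there $f_t(z)$ is close to $z$ by the hydrodynamic normalization), the $\{\pm1\}$-valued map $t\mapsto\epsilon_i(t)=\sigma_t(z)/g_t(z)$ is locally constant, hence a single constant $\epsilon_i$; it does not depend on $n$ since the decompositions for different $n$ coincide on their common time intervals. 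Thus the coupling of the $i$-th Loewner chain is equivalent to $A^{(i)}_{\bm{X},t}(z)\equiv 0$ together with $\sigma_t(z)=\epsilon_i g_t(z)$, which is precisely (\ref{eq:increment_h_equivalent_to_coupling}); conversely, if (\ref{eq:increment_h_equivalent_to_coupling}) holds, then $\left(\mfrak{h}^{(i)}_{\bm{X},t}(z):t\ge 0\right)$ has no drift, hence is a local martingale --- the integrand is bounded by $2/\mrm{Im}\,z$ since $\mrm{Im}\,f_t(z)$ is nondecreasing along the backward flow --- and its bracket is $g_t(z)g_t(w)\,dt=-dG_t(z,w)$ because $\epsilon_i^2=1$. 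I expect the genuinely delicate point to be this last promotion of the a priori time-dependent sign $\epsilon_i(t)$ to a single deterministic constant; the rest is a direct It{\^o} computation together with the explicit form of the Neumann Green's function on $\mbb{H}$.
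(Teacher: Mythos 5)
Your proof is correct and follows essentially the same route as the paper's: compute $dG_{t}(z,w)=-\mrm{Re}\frac{2}{f_{t}(z)-W_{t}}\mrm{Re}\frac{2}{f_{t}(w)-W_{t}}\,dt$, observe that the martingale part of $\mfrak{h}^{(i)}_{\bm{X},t}(z)$ comes only from the $W_{t}$-slot, and use the rank-one factorization of the cross variation to force the diffusion coefficient to equal $\pm\,\mrm{Re}\frac{2}{f_{t}(z)-W_{t}}$. The only difference is that you make explicit two points the paper leaves implicit --- the identification of the diffusion coefficient as $\sqrt{\kappa}\,\del_{x_{i}}u$ and the continuity argument promoting the a priori time-dependent sign to a single constant $\epsilon_{i}$ --- both of which are welcome additions rather than deviations.
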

\begin{proof}
It follows from a direct computation of the increment of the stochastic process $(G_{t}(z,w):t\ge 0)$, $z,w\in\mbb{H}$, $z\neq w$ that
\begin{equation*}
	dG_{t}(z,w)=-\mrm{Re}\frac{2}{f_{t}(z)-W_{t}}\mrm{Re}\frac{2}{f_{t}(w)-W_{t}} dt,\ \ t\ge 0,\ \ z,w\in\mbb{H}.
\end{equation*}
Therefore, it is obvious that (\ref{eq:increment_h_equivalent_to_coupling}) implies the coupling.
Conversely, let us assume the coupling. Then, for each $i=1,\dots, N$, the increment of the stochastic process $\left(\mfrak{h}^{(i)}_{\bm{X},t}:t\ge 0\right)$ has the form of
\begin{equation*}
	d\mfrak{h}^{(i)}_{\bm{X},t}(z)=F^{(i)}_{n,t}(z)dB^{(i)}_{n,t},\quad t\ge 0,\quad z\in\mbb{H}
\end{equation*}
with some stochastic process $(F^{(i)}_{n,t}(z):t\geq 0)$ depending on $z\in\mbb{H}$ for every $n\in\mbb{N}$.
From the assumption on the cross variations, we have
\begin{equation}
\label{eq:relations_coefficients_processes}
	F^{(i)}_{n,t}(z)F^{(i)}_{n,t}(w)=\mrm{Re}\frac{2}{f_{t}(z)-W_{t}}\mrm{Re}\frac{2}{f_{t}(w)-W_{t}}, \quad z,w\in \mbb{H},
\end{equation}
which implies that there exists a stochastic process $(U^{(i)}_{n,t}:t\geq 0)$ independent of $z\in \mbb{H}$ such that
\begin{equation*}
	U^{(i)}_{n,t}=\frac{F^{(i)}_{n,t}(z)}{\mrm{Re}\frac{2}{f_{t}(z)-W_{t}}}=\frac{F^{(i)}_{n,t}(w)}{\mrm{Re}\frac{2}{f_{t}(w)-W_{t}}}, \quad t\geq 0.
\end{equation*}
Back to (\ref{eq:relations_coefficients_processes}), we must have $(U^{(i)}_{n,t})^{2}=1$, $t\geq 0$, which gives the desired result.
\end{proof}

\begin{proof}[Proof of Theorem \ref{thm:coupling_constraint}]
For a boundary perturbation $u(\cdot;x_{1},\dots, x_{N})=u(z;x_{1},\dots, x_{N})$, we write
its holomorphic extension by $\tilde{u}(z;x_{1},\dots, x_{N})$, in other words, we have
\begin{equation*}
	u(z;x_{1},\dots, x_{N})=\mrm{Re}\tilde{u}(z;x_{1},\dots,x_{N}),\quad z\in \mbb{H}.
\end{equation*}
Such a holomorphic function uniquely exists on $\mbb{H}$ up to additive constants.
Then, the stochastic process $(\mfrak{h}^{(i)}_{\bm{X},t}:t\ge 0)$ is also realized as
$\mfrak{h}^{(i)}_{\bm{X}, t}=\mrm{Re}\tilde{\mfrak{h}}^{(i)}_{\bm{X}, t}$, $t\ge 0$ where
\begin{equation*}
	\tilde{\mfrak{h}}_{\bm{X},t}^{(i)}(z):=\tilde{u}\left(f_{t}(z);X^{(1)}_{t},\dots, \overset{i}{\check{W_{t}}}, \dots, X^{(N)}_{t}\right) + Q\log \pr{f}_{t}(z),\quad t\ge 0,\quad z\in\mbb{H}.
\end{equation*}
By definition of the probability measure $\mbb{Q}^{(i)}_{\bm{X},n}$ in (\ref{eq:transformed_measure}), the stochastic process $\left(\tilde{\mfrak{h}}^{(i)}_{\bm{X},t}:t\ge 0\right)$
is a $\mbb{Q}^{(i)}_{\bm{X},n}$-local martingale if and only if the stochastic process 
$\left(N^{(i)}_{\bm{X},t}:=\tilde{\mfrak{h}}^{(i)}_{\bm{X},t}M^{(i)}_{\bm{X},t}:t\ge 0\right)$ is a $\mbb{P}^{X_{i}}$-local martingale.
For convenience, we set
\begin{align}
\label{eq:def_function_X}
	\mcal{X}(z;x_{1},\dots, x_{N}) :=&\tilde{u}(z;x_{1},\dots, x_{N})\mcal{Z}(x_{1},\dots, x_{N}),\\
	& z\in\mbb{H},\quad (x_{1},\dots, x_{N})\in\mrm{Conf}_{N}(\mbb{R}). \notag
\end{align}
Then, the stochastic process $\left(N^{(i)}_{\bm{X},t}:t\ge 0\right)$ is explicitly written as
\begin{align}
\label{eq:martingale_N}
	N^{(i)}_{\bm{X},t}(z)=&\prod_{j;j\neq i}\pr{f}_{t}(X_{j})^{h_{\kappa}}\mcal{X}\left(f_{t}(z);X^{(1)}_{t},\dots, \overset{i}{\check{W_{t}}},\dots, X^{(N)}_{t}\right) \\
	&+Q\log \pr{f}_{t}(z) \prod_{j;j\neq i}\pr{f}_{t}(X_{j})^{h_{\kappa}}\mcal{Z}\left(X^{(1)}_{t},\dots, \overset{i}{\check{W}_{t}},\dots, X^{(N)}_{t}\right),\quad t\ge 0,\quad z\in\mbb{H}. \notag
\end{align}
Its increment is computed as
\begin{align*}
	dN^{(i)}_{\bm{X},t}(z)
	=&\prod_{j;j\neq i}\pr{f}_{t}(X_{j})^{h_{\kappa}}\Biggl[(\mcal{D}_{z,i}^{\kappa}\mcal{X})\left(f_{t}(z);X^{(1)}_{t},\dots, \overset{i}{\check{W}_{t}},\dots, X^{(N)}_{t}\right)\\
	&\hspace{70pt}+\frac{2Q}{(f_{t}(z)-W_{t})^{2}}\mcal{Z}\left(X^{(1)}_{t},\dots,\overset{i}{\check{W}_{t}},\dots, X^{(N)}_{t}\right)\Biggr]dt \\
	&+\Biggl[\sqrt{\kappa}\prod_{j;j\neq i}\pr{f}_{t}(X_{j})^{h_{\kappa}}(\del_{x_{j}}\mcal{X})\left(f_{t}(z),X^{(1)}_{t},\dots, \overset{i}{\check{W}_{t}},\dots, X^{(N)}_{t}\right)\\
	&\hspace{20pt}+Q\log \pr{f}_{t}(z)s^{(i)}_{\bm{X},t}M^{(i)}_{\bm{X},t}\Biggr] dB_{t},\quad t\ge 0,\quad z\in\mbb{H},
\end{align*}
where
\begin{equation*}
	\mcal{D}_{z,i}^{\kappa}:=\frac{\kappa}{2}\del_{x_{i}}^{2}-2\sum_{j;j\neq i}\left(\frac{1}{x_{j}-x_{i}}\del_{x_{j}}-\frac{h_{\kappa}}{(x_{j}-x_{i})^{2}}\right)-\frac{2}{z-x_{i}}\del_{z},\quad i=1,\dots, N.
\end{equation*}
Recall that the stochastic process denoted by $\left(s^{(i)}_{\bm{X},t}:t\geq 0\right)$ was defined in (\ref{eq:Girsanov_drift_term}).
Therefore, the stochastic process $\left(N^{(i)}_{\bm{X},t}:t\ge 0\right)$ is a $\mbb{P}^{X_{i}}$-local martingale
for an arbitrary initial condition $\bm{X}\in\mrm{Conf}_{N}(\mbb{R})$ if and only if the function $\mcal{X}$ satisfies
\begin{align}
\label{eq:defining_relation_X}
	(\mcal{D}_{z,i}^{\kappa}\mcal{X})(z;x_{1},\dots, x_{N})+\frac{2Q}{(z-x_{i})^{2}}\mcal{Z}(x_{1},\dots, x_{N})=0,& \\
	z\in\mbb{H},\quad (x_{1},\dots, x_{N})\in\mrm{Conf}_{N}(\mbb{R})&. \notag
\end{align}

Assuming (\ref{eq:defining_relation_X}), we write the increment of $\left(N^{(i)}_{\bm{X},t}:t\ge 0\right)$ as
\begin{equation}
\label{eq:increment_N}
	dN^{(i)}_{\bm{X},t}(z)=\alpha^{(i)}_{\bm{X},t}(z)dB_{t},\quad t\ge 0,\quad z\in\mbb{H}
\end{equation}
with
\begin{align*}
	\alpha^{(i)}_{\bm{X},t}(z):=&\sqrt{\kappa}\prod_{j;j\neq i}\pr{f}_{t}(X_{j})^{h_{\kappa}}(\del_{x_{j}}\mcal{X})\left(f_{t}(z),X^{(1)}_{t},\dots, \overset{i}{\check{W}_{t}},\dots, X^{(N)}_{t}\right) \\
	&+Q\log \pr{f}_{t}(z)s^{(i)}_{\bm{X},t}M^{(i)}_{\bm{X},t},\quad t\ge 0,\quad z\in\mbb{H}.
\end{align*}

Let us get back to consideration of the stochastic process $\left(\tilde{\mfrak{h}}^{(i)}_{\bm{X},t}:t\ge 0\right)$.
Recall the relation $\tilde{\mfrak{h}}^{(i)}_{\bm{X},t}(z)=N^{(i)}_{\bm{X},t}(z)/M^{(i)}_{\bm{X},t}$, $t\geq 0$, $z\in \mbb{H}$.
The increment of the numerator is given in (\ref{eq:increment_N}). It also follows from (\ref{eq:Girsanov_martingale_SDE}) that
\begin{equation*}
	d\left(\frac{1}{M^{(i)}_{\bm{X},t}}\right)=-\frac{s^{(i)}_{\bm{X},t}}{M^{(i)}_{\bm{X},t}}\left(dB_{t}-s^{(i)}_{\bm{X},t}\right), \quad t\geq 0.
\end{equation*}
Then, the increment of the stochastic process $\left(\tilde{\mfrak{h}}^{(i)}_{\bm{X},t}:t\ge 0\right)$ is computed as
\begin{equation*}
	d\tilde{\mfrak{h}}^{(i)}_{\bm{X},t}(z)=\left(\frac{\alpha^{(i)}_{\bm{X},t}(z)}{M^{(i)}_{\bm{X},t}}-\frac{N^{(i)}_{\bm{X},t}(z)s^{(i)}_{\bm{X},t}}{M^{(i)}_{\bm{X},t}}\right)\left(dB_{t}-s^{(i)}_{\bm{X},t}dt\right),\quad t\ge 0,\quad z\in\mbb{H}.
\end{equation*}
By definition (\ref{eq:transformed_BM}) of the $\mbb{Q}^{(i)}_{\bm{X},n}$-Brownian motion $\left(B^{(i)}_{n,t},t\ge 0\right)$,
we have $dB_{n,t}^{(i)}=dB_{t}-s^{(i)}_{\bm{X},t}dt$, $t\ge 0$.
The coefficient can also be further computed to give
\begin{equation*}
	d\tilde{\mfrak{h}}^{(i)}_{\bm{X},t}(z)=\sqrt{\kappa}\left(\del_{x_{i}}\tilde{u}\right)\left(f_{t}(z);X^{(1)}_{t},\dots,\overset{i}{\check{W}_{t}},\dots, X^{(N)}_{t}\right)dB^{(i)}_{n,t},\quad t\ge 0,\quad z\in\mbb{H}.
\end{equation*}
From Lemma \ref{lem:coupling_condition}, we can also require that there exists a sequence $\bm{\epsilon}=(\epsilon_{i}\in \{\pm\}:i=1,\dots, N)$ such that
\begin{equation}
\label{eq:condition_perturbation}
	(\del_{x_{i}}\tilde{u})(z;x_{1},\dots, x_{N})=\frac{2\epsilon_{i}/\sqrt{\kappa}}{z-x_{i}},\quad z\in \mbb{H},\quad (x_{1},\dots, x_{N})\in\mrm{Conf}_{N}(\mbb{R}),\quad i=1,\dots, N,
\end{equation}
so that the $\mcal{Z}$-multiple backward SLE$(\kappa)$ is coupled with $H_{(u,\bm{X})}$ for an arbitrary $\bm{X}\in\mrm{Conf}_{N}(\mbb{R})$.
The differential equations (\ref{eq:condition_perturbation}) are solved by
\begin{equation*}
	\tilde{u}(z;x_{1},\dots, x_{N})=-\frac{2}{\sqrt{\kappa}}\sum_{i=1}^{N}\epsilon_{i}\log (z-x_{i})+h(z),\quad z\in\mbb{H},\quad (x_{1},\dots, x_{N})\in\mrm{Conf}_{N}(\mbb{R}),
\end{equation*}
where $h=h(z)$ is a holomorphic function only of $z\in\mbb{H}$.
It can be seen that the assumption that $u=\mrm{Re}\tilde{u}$ is a boundary perturbation for free boundary GFF requires the function $h$ to be constant
so that it is translation and scale invariant modulo additive constants.

Let us write $\mcal{X}=\tilde{u}\mcal{Z}$ with $\tilde{u}$ being given above
and apply the operators $\mcal{D}_{z,i}^{\kappa}$, $i=1,\dots, N$ on both sides.
Note that $\mcal{D}_{z,i}^{\kappa}=\mcal{D}^{\kappa}_{i}-\frac{2}{z-x_{i}}\del_{z}$ and $\mcal{D}^{\kappa}_{i}\mcal{Z}=0$, $i=1,\dots, N$.
Then we have, for each $i=1,\dots N$,
\begin{align*}
	(\mcal{D}_{z,i}^{\kappa}\mcal{X})(z;x_{1},\dots, x_{N})
		=&\left(\frac{(\sqrt{\kappa}+4/\sqrt{\kappa})\epsilon_{i}}{(z-x_{i})^{2}}+\frac{4/\sqrt{\kappa}}{z-x_{i}}\sum_{j;j\neq i}\frac{\epsilon_{j}}{x_{i}-x_{j}}\right)\mcal{Z}(x_{1},\dots, x_{N}) \\
		&+\frac{2\sqrt{\kappa}\epsilon_{i}}{z-x_{i}}(\del_{x_{i}}\mcal{Z})(x_{1},\dots, x_{N}),\quad z\in\mbb{H},\quad (x_{1},\dots, x_{N})\in\mrm{Conf}_{N}(\mbb{R}).
\end{align*}
For (\ref{eq:defining_relation_X}) to be satisfied, we must take $\sqrt{\kappa}=\gamma$ or $\sqrt{\kappa}=4/\gamma$ and $\epsilon_{i}=-1$, $i=1,\dots, N$.
We see that additional conditions on the $(N,\kappa)$-partition function are imposed so that
\begin{equation}
\label{eq:additional_condition}
	(\del_{x_{i}}\mcal{Z})(x_{1},\dots, x_{N})=\sum_{j;j\neq i}\frac{-2/\kappa}{x_{i}-x_{j}}\mcal{Z}(x_{1},\dots, x_{N}),\quad (x_{1},\dots, x_{N})\in\mrm{Conf}_{N}(\mbb{R}),\quad i=1,\dots, N.
\end{equation}
This implies that the $(N,\kappa)$-partition function exhibits the asymptotic behavior
\begin{equation*}
	\mcal{Z}(x_{1},\cdots, x_{N})\sim (x_{i}-x_{j})^{-2/\kappa}\quad \mbox{as}\quad x_{i}\downarrow x_{j}
\end{equation*}
for any pair $i, j\in \{1,\dots, N\}$.
It is readily seen that the function
\begin{equation*}
	\mcal{Z}(x_{1},\dots, x_{N})=\prod_{1\le i<j\le N}|x_{i}-x_{j}|^{-2/\kappa},\quad (x_{1},\dots, x_{N})\in\mrm{Conf}_{N}(\mbb{R})
\end{equation*}
is an $(N,\kappa)$-partition function.
Due to the comments before Definition \ref{defn:N_kappa_partition_function}, an $(N,\kappa)$-partition function is uniquely determined up to multiplicative constants
by asymptotic behaviors when any two points approach each other.
Therefore, the $(N,\kappa)$-partition function under consideration is the above one up to multiplicative constants.
\end{proof}

\appendix
\section{Conformal field theory approach}
\label{app:CFT}
In this paper, we avoided an explicit use of CFT.
The idea of the so-called SLE/CFT-correspondence \cite{BauerBernard2003, BauerBernard2004} is to construct several local martingales related to SLE as matrix elements of operator valued distribution in CFT.
We have dealt with several stochastic processes, some of which are local martingales, related to backward SLE in the probability theoretical language.
For readers familiar with CFT, however, it might be more useful to interpret those stochastic processes in the language of CFT.

The free boson field $\phi(z)$ is defined as a formal series:
\begin{equation*}
	\phi (z)=q+a_{0}\log (z)-\sum_{n\neq 0}\frac{a_{n}}{n}z^{-n},
\end{equation*}
where the symbols $q$ and $a_{n}$, $n\in\mbb{Z}$ are subject to the commutation relations:
\begin{equation}
\label{eq:Heisenberg_commutation_relations}
	[a_{m},q]=\delta_{m,0},\quad m\in\mbb{Z},\qquad  [a_{m},a_{n}]=m\delta_{m+n,0},\quad m,n\in\mbb{Z}.
\end{equation}
Here, $\delta_{i,j}$ is the Kronecker delta.
Then, the current field $J(z):=\del \phi(z)=\sum_{n\in\mbb{Z}}a_{n}z^{-n-1}$ satisfies the following operator product expansion (OPE):
\begin{equation*}
	J(z)J(w)\sim \frac{1}{(z-w)^{2}}.
\end{equation*}
The vertex operator $V_{\alpha}(z)$ of charge $\alpha\in\mbb{C}$ is defined by
\begin{align*}
	V_{\alpha}(z)&:=\no{e^{\sqrt{-1}\alpha \phi(z)}}\\
	&=e^{\sqrt{-1}\alpha q}z^{\sqrt{-1}\alpha a_{0}}\exp\left(-\sqrt{-1}\alpha\sum_{j<0}\frac{a_{j}}{j}z^{-j}\right)\exp\left(-\sqrt{-1}\alpha\sum_{j>0}\frac{a_{j}}{j}z^{-j}\right).
\end{align*}
Recall that the normally ordered product $\no{\bullet}$ is defined by
\begin{equation*}
	\no{q^{n}a_{m_{1}}\cdots a_{m_{k}}}:=q^{n}a_{m_{\sigma (1)}}\cdots a_{m_{\sigma (k)}},\quad n, m_{1},\dots, m_{k}\in \mbb{Z},
\end{equation*}
where $\sigma$ is a permutation of $(m_{1},\dots, m_{k})\in \mbb{Z}^{k}$ such that $m_{\sigma (1)}\leq \cdots \leq m_{\sigma (k)}$.
The above definition is independent of the choice of such a permutation because of the commutation relations (\ref{eq:Heisenberg_commutation_relations}).
Note that the free boson field is also obtained formally as
\begin{equation*}
	\phi(z)=-\sqrt{-1}\frac{d}{d\alpha}\Big|_{\alpha=0}V_{\alpha}(z).
\end{equation*}

Given a parameter $b\in\mbb{C}$, the stress-energy tensor (Virasoro field) is defined by
\begin{equation*}
	T_{b}(z)=\frac{1}{2}\no{J(z)^{2}}+\sqrt{-1}b\del J(z),
\end{equation*}
and the corresponding central charge is checked to be $c^{\mrm{FB}}_{b}=1+12b^{2}$.
A vertex operator $V_{\alpha}(z)$, $\alpha\in\mbb{C}$ is a primary field of conformal weight $h^{\mrm{FB}}_{b}(\alpha)=\alpha (2b-\alpha)/2$
with respect to $T_{b}(z)$. In fact, it exhibits the following OPE with $T_{b}(z)$:
\begin{equation*}
	T_{b}(z)V_{\alpha}(w)\sim \frac{h^{\mrm{FB}}_{b}(\alpha)V_{\alpha}(w)}{(z-w)^{2}}+\frac{\del V_{\alpha}(w)}{z-w}.
\end{equation*}

For $\kappa>0$, we adopt the parametrization
\begin{equation*}
	b(\kappa)=\sqrt{\kappa/8}+\sqrt{2/\kappa},\quad \alpha_{+}(\kappa)=-\sqrt{2/\kappa},\quad \alpha_{-}(\kappa)=\sqrt{\kappa/2}+6/\sqrt{2\kappa}.
\end{equation*}
Then, we have $c^{\mrm{FB}}_{b(\kappa)}=c_{\kappa}=1+\frac{3(\kappa+4)^{2}}{2\kappa}$ and $h^{\mrm{FB}}_{b(\kappa)}(\alpha_{\pm}(\kappa))=h_{\kappa}$.

We also consider a Liouville CFT.
Let $\Psi_{h}$, $h\in\mbb{C}$ be a Virasoro primary field of conformal weight $h$ and set
\begin{equation*}
	\mcal{Z}(x_{1},\dots, x_{N})=\braket{h|\Psi_{h_{\kappa}}(x_{1})\cdots \Psi_{h_{\kappa}}(x_{N})|0},
\end{equation*}
where $\ket{0}$ is the vacuum vector of central charge $c_{\kappa}$, and $\bra{h}$ is the dual of a suitable highest weight vector
so that the above correlation function is non-trivial.
Since the field $\Psi_{h_{\kappa}}$ is degenerate, the correlation function $Z(x_{1},\dots, x_{N})$ satisfies BPZ equations:
\begin{equation}
\label{eq:BPZ}
	\mcal{D}^{\kappa}_{i}\mcal{Z}=0,\quad i=1,\dots, N.
\end{equation}
Therefore, the function $\mcal{Z}$ is considered as an $(N,\kappa)$-partition function.

Under the free boson theory, we set
\begin{align}
\label{eq:correlation_free_boson}
	\mcal{Z}^{\mrm{FB}}(x_{1},\dots,x_{N})
	&=\braket{N\alpha_{+}(\kappa)|V_{\alpha_{+}(\kappa)}(x_{1})\cdots V_{\alpha_{+}(\kappa)}(x_{N})|0} \\
	&=\prod_{1\le i<j\le N}(x_{i}-x_{j})^{-2/\kappa},\quad  x_{1}>x_{2}>\cdots >x_{N}, \notag
\end{align}
where $\ket{\alpha}$ is the vacuum vector of charge $\alpha$ and $\bra{\alpha}$ is its dual.
Then, the above correlation function $\mcal{Z}^{\mrm{FB}}(x_{1},\dots, x_{N})$ satisfies the system of BPZ equations (\ref{eq:BPZ}).

Next, we consider the {\it correlation function} 
\begin{equation*}
	\widetilde{\mcal{X}}(z,x_{1},\dots, x_{N})=\sqrt{-2}\braket{h|V_{\alpha}(z)\Psi_{h_{\kappa}}(x_{1})\cdots \Psi_{h_{\kappa}}(x_{N})|0},
\end{equation*}
which does not, however, make a rigorous representation theoretical sense because the vertex operator $V_{\alpha}(z)$ does not act on a state space of a Liouville CFT.
Nevertheless, the above description verifies a defining property of $\mcal{X}(z,x_{1},\dots,x_{N})$ in (\ref{eq:def_function_X})
as a solution of a system of differential equations.
Regarding the vertex operator $V_{\alpha}(z)$ as a primary field of conformal weight $h^{\mrm{FB}}_{b(\kappa)}(\alpha)$, we see that
\begin{equation}
\label{eq:BPZ_vertex}
	\widetilde{\mcal{D}}_{z,i}^{\kappa,\alpha}\widetilde{\mcal{X}}=0,\quad i=1,\dots, N,
\end{equation}
where
\begin{align*}
	\widetilde{\mcal{D}}_{z,i}^{\kappa,\alpha}=&\frac{\kappa}{2}\del_{x_{i}}^{2}-2\sum_{j;j\neq i}\left(\frac{1}{x_{j}-x_{i}}\del_{x_{j}}-\frac{h_{\kappa}}{(x_{j}-x_{i})^{2}}\right)-\frac{2}{z-x_{i}}\del_{z}+\frac{2h^{\mrm{FB}}_{b(\kappa)}(\alpha)}{(z-x_{i})^{2}}, \\
	& i=1,\dots, N.
\end{align*}
Applying the directional derivative $-\sqrt{-1}\frac{d}{d\alpha}|_{\alpha=0}$ to (\ref{eq:BPZ_vertex}), we see that the {\it correlation function}
\begin{equation*}
	\mcal{X}(z,x_{1},\dots, x_{N})=\sqrt{-2}\braket{h|\phi(z)\Psi_{h_{\kappa}}(x_{1})\cdots \Psi_{h_{\kappa}}(x_{N})|0}	
\end{equation*}
satisfies the system of differential equations
\begin{equation*}
	(\mcal{D}_{z,i}^{\kappa}\mcal{X})(z,x_{1},\dots, x_{N})+\frac{2Q}{(z-x_{i})^{2}}\mcal{Z}(x_{1},\dots,x_{N})=0,\quad i=1,\dots, N,
\end{equation*}
where $Q=\frac{2}{\sqrt{\kappa}}+\frac{\sqrt{\kappa}}{2}$.
Therefore, the function $\mcal{X}$ here is identified the function in (\ref{eq:def_function_X}).

We also remark that the correlation function $\mcal{Z}^{\mrm{FB}}$ in (\ref{eq:correlation_free_boson})
satisfies an additional system of differential equations.
Noticing the property
\begin{equation*}
	\del_{z}V_{\alpha}(z)=\sqrt{-1}\alpha\no{J(z)V_{\alpha}(z)}
\end{equation*}
and an OPE
\begin{equation*}
	J(z)V_{\alpha}(w)\sim \frac{1}{z-w}\no{J(w)V_{\alpha}(w)},
\end{equation*}
we see that
\begin{equation*}
	(\del_{x_{1}}\mcal{Z}^{\mrm{FB}})(x_{1},\dots, x_{N})=\sum_{j;j\neq i}\frac{-\alpha_{+}(\kappa)^{2}}{x_{i}-x_{j}}\mcal{Z}(x_{1},\dots, x_{N}),\quad i=1,\dots, N.
\end{equation*}
These are exactly the same as (\ref{eq:additional_condition}) and are regarded as {\it Knizhnik--Zamolodchikov equations}.

Let $(f_{t}(\cdot):t\ge 0)$ be a backward SLE$(\kappa)$
and write $W_{t}=\sqrt{\kappa}B_{t}$, $t\ge 0$ with $(B_{t}:t\ge 0)$ being a standard Brownian motion for its driving process.
The group theoretical formulation of SLE \cite{BauerBernard2003, BauerBernard2004}
(see also \cite[Appendix B]{KatoriKoshida2020a} and \cite[Section II]{SK2018}) associates to it an operator valued stochastic process
$(R(f_{t}):t\ge 0)$ satisfying
\begin{equation*}
	\frac{d}{dt}R(f_{t})=2R(f_{t})e^{L_{-1}W_{t}}L_{-2}e^{-L_{-1}W_{t}},\quad t\ge 0,\quad R(f_{0})=\mrm{Id},
\end{equation*}
where $L_{n}$, $n\in \mbb{Z}$ are the standard generators of the Virasoro algebra.
A primary field $\Psi_{h}$ behaves under conjugation by $R(f_{t})$, $t\ge 0$ as
\begin{equation*}
	R(f_{t})^{-1}\Psi_{h}(z)R(f_{t})=\pr{f_{t}}(z)^{h}\Psi(f_{t}(z)),\quad t\ge 0.	
\end{equation*}
Regarding a vertex operator $V_{\alpha}(z)$ as a primary field of conformal weight $h^{\mrm{FB}}_{b}(\alpha)$, we see that
it behaves in the same manner.
Then, the application of the directional derivative $-\sqrt{-1}\frac{d}{d\alpha}|_{\alpha=0}$ leads to
\begin{equation*}
	R(f_{t})^{-1}\phi(z) R(f_{t})=\phi (f_{t}(z))-\frac{\sqrt{-1}b}{2}\log \pr{f}_{t}(z),\quad t\ge 0.	
\end{equation*}

Owing to the fact that $(2L_{-2}+\frac{\kappa}{2}L_{-1}^{2})\ket{h_{\kappa}}=0$ in the irreducible representation of central charge $c_{\kappa}$, 
and the property $\Psi_{h_{\kappa}}(W_{t})\ket{0}=e^{L_{-1}W_{t}}\ket{h_{\kappa}}$,
the vector valued stochastic process
\begin{equation*}
	R(f_{t})\Psi_{h_{\kappa}}(W_{t})\ket{0},\quad t\ge 0
\end{equation*}
is a local martingale.
Therefore, it follows that, for $z\in\mbb{H}$, $\bm{X}\in\mrm{Conf}_{N}(\mbb{R})$ and $i\in\{1,\dots, N\}$, the stochastic processes
\begin{align*}
	M^{(i)}_{\bm{X},t}:&=\Braket{h|\Psi_{h_{\kappa}}(X_{1})\cdots \what{\Psi_{h_{\kappa}}(X_{i})}\cdots \Psi_{h_{\kappa}}(X_{N})R(f_{t})\Psi_{h_{\kappa}}(W_{t})|0} \\
	&=\prod_{j;j\neq i}\pr{f}_{t}(X_{j})^{h_{\kappa}}\mcal{Z}\left(f_{t}(X_{1}),\dots,\overset{i}{\check{W}_{t}},\dots, f_{t}(X_{N})\right),\quad t\ge 0
\end{align*}
and
\begin{align*}
	N^{(i)}_{\bm{X},t}(z):&= \sqrt{-2}\Braket{h|\phi(z)\Psi_{h_{\kappa}}(X_{1})\cdots \what{\Psi_{h_{\kappa}}(X_{i})}\cdots \Psi_{h_{\kappa}}(X_{N})R(f_{t})\Psi_{h_{\kappa}}(W_{t})|0} \\
	&= \prod_{j;j\neq i}\pr{f}_{t}(X_{j})^{h_{\kappa}}\mcal{X}\left(f_{t}(z),f_{t}(X_{1}),\dots,\overset{i}{\check{W}_{t}},\dots, f_{t}(X_{N})\right) \\
	&\hspace{20pt} +Q\log\pr{f}_{t}(z)\prod_{j;j\neq i}\pr{f}_{t}(X_{j})^{h_{\kappa}}\mcal{Z}\left(f_{t}(X_{1}),\dots,\overset{i}{\check{W}_{t}},\dots, f_{t}(X_{N})\right),\quad t\ge 0	
\end{align*}
are the local martingales that appeared in (\ref{eq:martingale_M}) and (\ref{eq:martingale_N}), respectively.

\section{Forward flow case}
\label{sect:forward_flow}
The aim of this appendix is to present an analogue of Theorem \ref{thm:coupling_constraint} in the case of forward flow.
The coupling between forward SLEs and GFFs has already been studied in many places~\cite{Dubedat2009,SchrammSheffield2013,IzyurovKytola2013, MillerSheffield2016a,PeltolaWu2019} (see also \cite{KatoriKoshida2020a,KatoriKoshida2020b}).
In these literatures, it has been shown that certain variants of SLE that include members of commuting Loewner chains are coupled with GFFs under specific boundary conditions.
They did not, however, excluded the possibility that other multiple SLEs are coupled with GFFs under other boundary conditions.
We will exclude this possibility below.

To make notations simpler, we use the same symbols as in the main text with different definition.
Therefore, readers are recommended to read this appendix separately from the main text.
At the same time, we give all descriptions in detail so that readers do not need to refer to the main text to read this appendix.

\subsection{Multiple SLE}
We define a multiple SLE as a multiple of probability measures.
Let $(B_{t}:t\ge 0)$ be a Brownian motion and write its law as $\mbb{P}$.
The law of a Brownian motion starting at $x$ will be denoted by $\mbb{P}^{x}$.
For a parameter $\kappa>0$, we consider an SLE$(\kappa)$ \cite{Schramm2000}, which is a Loewner chain $(g_{t}(\cdot):t\ge 0)$ satisfying
\begin{equation*}
	\frac{d}{dt}g_{t}(z)=\frac{2}{g_{t}(z)-W_{t}},\quad W_{t}=\sqrt{\kappa}B_{t},\quad t\ge 0,\quad g_{0}(z)=z\in\mbb{H}.
\end{equation*}
If we set $\eta (t):=\lim_{\epsilon\downarrow 0}g^{-1}_{t}(W_{t}+\sqrt{-1}\epsilon)$, $t\ge 0$,
then $\eta:[0,\infty)\to\overline{\mbb{H}}$ is almost surely a continuous curve \cite{RohdeSchramm2005},
which we call an SLE$(\kappa)$-curve.
Also we write $\mbb{H}_{t}$ for the unbounded component of $\mbb{H}\backslash \eta (0,t]$, $t\ge 0$
and set $K_{t}:=\mbb{H}\backslash\mbb{H}_{t}$.
Then,
\begin{equation*}
	g_{t}:\mbb{H}_{t}:=\mbb{H}\backslash K_{t}\to\mbb{H}
\end{equation*}
is the hydrodynamically normalized conformal equivalence at each $t\ge 0$.

For $N\in\mbb{N}$ and $\kappa>0$, an $(N,\kappa)$-partition function $\mcal{Z}$ is a translation invariant homogeneous function
on $\mrm{Conf}_{N}(\mbb{R})$ such that $\mcal{D}_{i}^{\kappa}\mcal{Z}=0$, $i=1,\dots, N$, where
\begin{equation*}
	\mcal{D}_{i}^{\kappa}=\frac{\kappa}{2}\del_{x_{i}}^{2}+2\sum_{j;j\neq i}\left(\frac{1}{x_{j}-x_{i}}\del_{x_{j}}-\frac{h_{\kappa}}{(x_{j}-x_{i})^{2}}\right),\quad i=1,\dots, N
\end{equation*}
with $h_{\kappa}=\frac{6-\kappa}{2\kappa}$.
We also assume that an $(N,\kappa)$-partition function is analytic in $\mrm{Conf}_{N}(\mbb{R})$ and admits the Frobenius expansion.
Solutions to this system of differential equations are studied in detail
in \cite{FloresKleban2015a,FloresKleban2015b,FloresKleban2015c,FloresKleban2015d,KytolaPeltola2016,PeltolaWu2019,KytolaPeltola2020}.
Usually, given an $(N,\kappa)$-partition function, the corresponding multiple SLE is defined as a multiple of Loewner chains properly constructed
\cite{KytolaPeltola2016,PeltolaWu2019}.
In this appendix, however, we directly construct  Girsanov transforms to define a multiple SLE.

Let $\left(g_{t}(\cdot):t\ge 0\right)$ be an SLE$(\kappa)$ driven by $(W_{t}:t\ge 0)$
and let $\mcal{Z}$ be an $(N,\kappa)$-partition function.
For $\bm{X}=(X_{1},\dots, X_{N})\in\mrm{Conf}_{N}(\mbb{R})$ and $i\in\{1,\dots, N\}$, we consider the stochastic process
$\left(M^{(i)}_{\bm{X},t}:t\ge 0\right)$ defined by
\begin{align*}
	M^{(i)}_{\bm{X},t}&=\prod_{j;j\neq i}\pr{g}_{t}(X_{j})^{h_{\kappa}}\mcal{Z}\left(X^{(1)}_{t},\dots,\overset{i}{\check{W}_{t}},\dots, X^{(N)}_{t}\right),\quad t\ge 0,\\
	\frac{d}{dt}X^{(j)}_{t}&=\frac{2}{X^{(j)}_{t}-W_{t}},\quad t\ge 0,\quad j\neq i
\end{align*}
under the probability measure $\mbb{P}^{X_{i}}$.
Here, $\pr{g}_{t}(z)$ is the derivative in terms of $z$.
For each $i\in\{1,\dots, N\}$, $n\in\mbb{N}$ and $\bm{X}\in\mrm{Conf}_{N}(\mbb{R})$, we set
\begin{equation*}
	\tau^{(i)}_{\bm{X},n}:=\mrm{inf}\left\{t>0\Big|\left|M^{(i)}_{\bm{X},t}\right|>n\right\}.
\end{equation*}
It it checked that it is a local martingale with increment
\begin{equation*}
	dM^{(i)}_{\bm{X},t}=s^{(i)}_{\bm{X},t}M^{(i)}_{\bm{X},t}dB_{t},\ \ s^{(i)}_{\bm{X},t}=\sqrt{\kappa}(\del_{x_{i}}\log\mcal{Z})\left(X^{(1)}_{t},\dots,\overset{i}{\check{W}_{t}},\dots, X^{(N)}_{t}\right),\quad t\ge 0.
\end{equation*}
We define the stochastic process $\left(B^{(i)}_{n,t}:t\ge 0\right)$ by
\begin{equation}
\label{eq:transformed_BM_forward}
	B^{(i)}_{n,t}=B_{t}-\int_{0}^{t\wedge \tau^{(i)}_{\bm{X},n}}s^{(i)}_{\bm{X},s}ds,\quad t\ge 0.
\end{equation}
Then, by Girsanov--Maruyama's theorem, this is a Brownian motion under the probability measure $\mbb{Q}^{(i)}_{\bm{X},n}$ defined by
\begin{equation}
\label{eq:transformed_measure_forward}
	\frac{d\mbb{Q}^{(i)}_{\bm{X},n}}{d\mbb{P}^{X_{i}}}=\lim_{t\to\infty} \frac{M^{(i)}_{\bm{X},t\wedge\tau^{(i)}_{\bm{X},n}}}{M^{(i)}_{\bm{X},0}}.
\end{equation}

\begin{defn}
Let $N\in\mbb{N}$ and $\kappa>0$.
Take an $(N,\kappa)$-partition function $\mcal{Z}$ and $\bm{X}\in\mrm{Conf}_{N}(\mbb{R})$.
A $\mcal{Z}$-multiple SLE$(\kappa)$ starting at $\bm{X}$ is a family of probability measures
$\left\{\mbb{Q}^{(i)}_{\bm{X},n}:i=1,\dots, N, n\in\mbb{N}\right\}$, each of which is defined by (\ref{eq:transformed_measure_forward}).
\end{defn}

It has been shown \cite{PeltolaWu2019} that this construction of multiple SLE coincides with a global definition of multiple SLE \cite{KozdronLawler2007,Lawler2009c,BeffaraPeltolaWu2018}.

\subsection{Dirichlet boundary GFF}
\label{subsect:Dirichlet_boundary_GFF}
Let $D\subsetneq \mbb{C}$ be a simply connected domain
and write $C^{\infty}_{0}(D)$ for the space of smooth functions on $D$ that are supported compactly.
We equip it with the Dirichlet inner product
\begin{equation*}
	(f,g)_{\nabla}=\frac{1}{2\pi}\int_{D} \nabla f\cdot \nabla g,\quad f,g\in C^{\infty}_{0}(D)
\end{equation*}
and write its Hilbert space completion as $W(D)$.

A Dirichlet boundary GFF on $D$ \cite{Sheffield2007} is a collection $\{(H,f)_{\nabla}|f\in W(D)\}$ of centered Gaussian random variables so that
\begin{equation*}
	\mbb{E}[(H,f)_{\nabla}(H,g)_{\nabla}]=(f,g)_{\nabla},\quad f,g\in W(D).
\end{equation*}
We write $\mcal{P}$ for the probability law of these Gaussian random variables.
Using the Dirichlet boundary Laplacian $\Delta$, we also set $(H,f):=2\pi (H,(-\Delta)^{-1}f)_{\nabla}$, $f\in W(D)$.
Then, we have
\begin{equation*}
	\mbb{E}[(H,f)(H,g)]=\int_{D\times D}f(z)G(z,w)g(w)dzdw,\quad f,g\in W(D),
\end{equation*}
where $G(z,w)$ is Dirichlet boundary Green's function of $D$.
It is reasonable that we formally write
\begin{equation*}
	(H,f)=\int_{D}H(z)f(z)dz,\quad f\in W(D)
\end{equation*}
and also call the random distribution $H$ a Dirichlet boundary GFF on $D$.
The desired covariance structure can be recovered by thinking of
\begin{equation*}
	\mbb{E}[H(z)H(w)]=G(z,w),\quad z,w\in D,\quad z\neq w.
\end{equation*}

\begin{exam}
In the case of $D=\mbb{H}$,
\begin{equation*}
	G_{\mbb{H}}(z,w)=-\log |z-w|+\log|z-\overline{w}|,\quad z,w\in \mbb{H},\quad z\neq w
\end{equation*}
is Drichlet boundary Green's function.
\end{exam}

\subsection{SLE/GFF-coupling}
\begin{defn}
A function $u=u(z;x_{1},\dots, x_{N})$ of $z\in\mbb{H}$ and $(x_{1},\dots, x_{N})\in \mrm{Conf}_{N}(\mbb{R})$
is called a boundary perturbation for Dirichlet boundary GFF if it is harmonic in $z\in\mbb{H}$
and has the following properties.
\begin{description}
\item[Translation invariance] 
	For any $a\in\mbb{R}$, we have
	\begin{equation*}
		u(z+a;x_{1}+a,\dots, x_{N}+a)=u(z;x_{1},\dots, x_{N}),\quad z\in\mbb{H},\quad (x_{1},\dots, x_{N})\in\mrm{Conf}_{N}(\mbb{R}).
	\end{equation*}
\item[Scale invariance]
	For any $\lambda>0$, we have
	\begin{equation*}
		u(\lambda z;\lambda x_{1},\dots, \lambda x_{N})=u(z;x_{1},\dots, x_{N}),\quad z\in\mbb{H},\quad (x_{1},\dots, x_{N})\in \mrm{Conf}_{N}(\mbb{R}).
	\end{equation*}
\end{description}
\end{defn}

For a boundary perturbation $u=u(z;x_{1},\dots, x_{N})$ and $\bm{X}\in\mrm{Conf}_{N}(\mbb{R})$,
we call the random distribution $H_{(u,\bm{X})}:=H+u(\cdot;\bm{X})$ with $H$ being a Dirichlet boundary GFF
a $(u,\bm{X})$-perturbed Dirichlet boundary GFF.
Note that for $f\in C^{\infty}_{0}(\mbb{H})$, which is compactly supported, we have
$(H_{(u,\bm{X})},f)=(H,f)$ a.s.
That is, a $(u,\bm{X})$-perturbed Dirichlet boundary GFF cannot be distinguished from the original Dirichlet boundary GFF
by a test function supported in the {\it bulk}.

Given a boundary perturbation $u=u(z;x_{1},\dots, x_{N})$ and $\bm{X}\in\mrm{Conf}_{N}(\mbb{R})$,
for each $i\in\{1,\dots, N\}$, we consider the following stochastic process
\begin{align*}
	\mfrak{h}^{(i)}_{\bm{X},t}(z)&=u\left(g_{t}(z);X^{(1)}_{t},\dots, \overset{i}{\check{W}_{t}},\dots, X^{(N)}_{t}\right)-\chi \arg \pr{g}_{t}(z),\quad t\ge 0,\\
	\frac{d}{dt}X^{(j)}_{t}&=\frac{2}{X^{(j)}_{t}-W_{t}},\quad t\ge 0,\quad X^{(j)}_{0}=X_{j},\quad j\neq i
\end{align*}
under $\mbb{P}^{X_{i}}$,
where $(g_{t}(\cdot):t\ge 0)$ is an SLE$(\kappa)$ driven by $(W_{t}:t\ge 0)$ and $\chi >0$.
We also assume that the probability measure $\mbb{P}^{X_{i}}$ is independent of the law of a Dirichlet boundary GFF.

\begin{defn}
Let $N\in\mbb{N}$, $\kappa>0$ and $\mcal{Z}$ be an $(N,\kappa)$-partition function.
We also let $u=u(z;x_{1},\dots, x_{N})$ be a boundary perturbation for Dirichlet boundary GFF.
For $\bm{X}\in\mrm{Conf}_{N}(\mbb{R})$, we say that a $\mcal{Z}$-multiple SLE$(\kappa)$ starting at $\bm{X}$
is coupled with a $(u,\bm{X})$-perturbed Dirichlet boundary GFF $H_{(u,\bm{X})}$ with coupling constant $\chi$ if,
for every $n\in\mbb{N}$, each $\left(\mfrak{h}^{(i)}_{\bm{X},t}:t\ge 0\right)$ is a $\mbb{Q}^{(i)}_{\bm{X},n}$-local martingale with
cross variation given by
\begin{equation*}
	d\left[\mfrak{h}^{(i)}_{\bm{X}}(z),\mfrak{h}^{(i)}_{\bm{X}}(w)\right]_{t}=-dG_{t}(z,w),\quad z,w\in\mbb{H},\quad t\ge 0,
\end{equation*}
where $G_{t}(z,w):=G_{\mbb{H}}(g_{t}(z),g_{t}(w))$, $z,w\in \mbb{H}_{t}$, $z\neq w$.
\end{defn}

To motivate this definition, let us consider the following stochastic processes.
For $\bm{X}\in\mrm{Conf}_{N}(\mbb{R})$ and each $i\in \{1,\dots, N\}$, set
\begin{equation*}
	\mfrak{p}^{(i)}_{\bm{X},t}:=\mfrak{h}^{(i)}_{\bm{X},t}+H\circ g_{t},\quad t\ge 0.
\end{equation*}
At $t=0$, we have $\mfrak{p}^{(i)}_{\bm{X},0}=H_{(u,\bm{X})}$ regardless of $i\in \{1,\dots, N\}$.

\begin{prop}
Suppose that a $\mcal{Z}$-multiple SLE$(\kappa)$ starting at $\bm{X}\in\mrm{Conf}_{N}(\mbb{R})$ is coupled with
a $(u,\bm{X})$-perturbed Dirichlet boundary GFF $H_{(u,\bm{X})}$ with parameter $\chi$.
Then, at each time $t\ge 0$, the law of $\mfrak{p}^{(i)}_{\bm{X},t}$ under $\mcal{P}\otimes \mbb{Q}^{(i)}_{\bm{X},n}$ is identical to that of $H_{(u,\bm{X})}$
under $\mcal{P}$ (see Subsect.~\ref{subsect:Dirichlet_boundary_GFF}) for every $i=1,\dots, N$ and $n\in\mbb{N}$.
\end{prop}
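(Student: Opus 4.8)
The plan is to follow verbatim the proof of the analogous Proposition in Sect.~\ref{sect:coupling_GFF}, with the free boundary GFF and its Neumann Green's function replaced by the Dirichlet boundary GFF and the Dirichlet Green's function $G_{\mbb{H}}(z,w)=-\log|z-w|+\log|z-\overline{w}|$, and with the backward Loewner chain replaced by the SLE$(\kappa)$ flow $(g_{t}(\cdot):t\ge 0)$. Fix $i\in\{1,\dots,N\}$ and $n\in\mbb{N}$. For a test function $\rho$ supported compactly in $\mbb{H}$, the $(u,\bm{X})$-perturbed Dirichlet boundary GFF produces a Gaussian variable $(H_{(u,\bm{X})},\rho)$ with mean $(u(\cdot;\bm{X}),\rho)$ and variance $E(\rho)=\int_{\mbb{H}\times\mbb{H}}\rho(z)G_{\mbb{H}}(z,w)\rho(w)\,dz\,dw$, so that $\mbb{E}[e^{\sqrt{-1}\zeta(H_{(u,\bm{X})},\rho)}]=e^{\sqrt{-1}\zeta(u(\cdot;\bm{X}),\rho)-\frac{\zeta^{2}}{2}E(\rho)}$. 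The goal is to show that $\mbb{E}[e^{\sqrt{-1}\zeta(\mfrak{p}^{(i)}_{\bm{X},t},\rho)}]$ equals the same expression for every $\zeta\in\mbb{R}$ and every such $\rho$.

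First I would condition on the filtration $(\mcal{F}_{t})_{t\ge 0}$ of the driving Brownian motion. Because $H$ is independent of the SLE under $\mcal{P}\otimes\mbb{Q}^{(i)}_{\bm{X},n}$ and the Dirichlet Green's function is conformally invariant, $(H\circ g_{t},\rho)$ is, conditionally on $\mcal{F}_{t}$, a centered Gaussian with variance $E_{t}(\rho):=\int_{\mbb{H}\times\mbb{H}}\rho(z)G_{t}(z,w)\rho(w)\,dz\,dw$, where $G_{t}(z,w)=G_{\mbb{H}}(g_{t}(z),g_{t}(w))$. Hence $\mbb{E}[e^{\sqrt{-1}\zeta(\mfrak{p}^{(i)}_{\bm{X},t},\rho)}]=\mbb{E}[e^{\sqrt{-1}\zeta(\mfrak{h}^{(i)}_{\bm{X},t},\rho)-\frac{\zeta^{2}}{2}E_{t}(\rho)}]$, the remaining expectation being with respect to $\mbb{Q}^{(i)}_{\bm{X},n}$.

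Next I would invoke the coupling hypothesis: $((\mfrak{h}^{(i)}_{\bm{X},t},\rho):t\ge 0)$ is a $\mbb{Q}^{(i)}_{\bm{X},n}$-local martingale with $d[(\mfrak{h}^{(i)}_{\bm{X}},\rho)]_{t}=-dE_{t}(\rho)$; since $\mfrak{p}^{(i)}_{\bm{X},0}=H_{(u,\bm{X})}$, i.e. $(\mfrak{h}^{(i)}_{\bm{X},0},\rho)=(u(\cdot;\bm{X}),\rho)$ and $E_{0}(\rho)=E(\rho)$ (as $g_{0}=\mrm{id}$), integrating gives $[(\mfrak{h}^{(i)}_{\bm{X}},\rho)]_{t}=E(\rho)-E_{t}(\rho)$. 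Substituting, $\mbb{E}[e^{\sqrt{-1}\zeta(\mfrak{p}^{(i)}_{\bm{X},t},\rho)}]=e^{-\frac{\zeta^{2}}{2}E(\rho)}\,\mbb{E}[e^{\sqrt{-1}\zeta(\mfrak{h}^{(i)}_{\bm{X},t},\rho)+\frac{\zeta^{2}}{2}[(\mfrak{h}^{(i)}_{\bm{X}},\rho)]_{t}}]$. The process inside the last expectation is the stochastic exponential of the continuous local martingale $\sqrt{-1}\zeta(\mfrak{h}^{(i)}_{\bm{X},t},\rho)$, hence itself a local martingale; its modulus equals $e^{\frac{\zeta^{2}}{2}(E(\rho)-E_{t}(\rho))}\le e^{\frac{\zeta^{2}}{2}E(\rho)}$ because $E_{t}(\rho)\ge 0$, so it is a bounded local martingale, hence a true martingale, and its expectation equals its value $e^{\sqrt{-1}\zeta(u(\cdot;\bm{X}),\rho)}$ at $t=0$. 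This yields $\mbb{E}[e^{\sqrt{-1}\zeta(\mfrak{p}^{(i)}_{\bm{X},t},\rho)}]=e^{\sqrt{-1}\zeta(u(\cdot;\bm{X}),\rho)-\frac{\zeta^{2}}{2}E(\rho)}$, exactly the characteristic function of $H_{(u,\bm{X})}$; $\zeta$ and $\rho$ being arbitrary, the claim follows.

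The point demanding care, rather than a genuine obstacle, is the bookkeeping forced by the fact that $g_{t}$ is defined only on $\mbb{H}_{t}=\mbb{H}\setminus K_{t}$: the statement is to be read as comparing the restrictions of $\mfrak{p}^{(i)}_{\bm{X},t}$ and of $H_{(u,\bm{X})}$ to $\mbb{H}_{t}$, i.e. one works with test functions $\rho$ whose compact support lies in $\mbb{H}_{t}$, so that $(H\circ g_{t},\rho)$ and the conformal invariance of $G_{\mbb{H}}$ make sense; the stopping times $\tau^{(i)}_{\bm{X},n}$ built into $\mbb{Q}^{(i)}_{\bm{X},n}$ legitimise all the It{\^o}-calculus manipulations. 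Everything else is the same computation as in the backward case, the only substantive input being that the complex stochastic exponential above has bounded modulus and is therefore a true, not merely local, martingale.
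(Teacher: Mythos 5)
Your proposal is correct and follows essentially the same route as the paper's own proof (which is itself carried over verbatim from the backward case): condition on the driving filtration to produce the factor $e^{-\frac{\zeta^{2}}{2}E_{t}(\rho)}$, integrate the cross-variation hypothesis to get $[(\mfrak{h}^{(i)}_{\bm{X}},\rho)]_{t}=E(\rho)-E_{t}(\rho)$, and recognize the resulting stochastic exponential as a martingale with initial value $e^{\sqrt{-1}\zeta(u(\cdot;\bm{X}),\rho)}$. Your added justifications --- boundedness of the complex stochastic exponential to upgrade it from a local to a true martingale, and the remark about restricting test functions to $\mbb{H}_{t}$ --- are sensible elaborations of points the paper leaves implicit, not a different argument.
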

\begin{proof}
The proof is identical to the case of backward flow, but we present it here again.
It can be seen that
\begin{equation*}
	\mbb{E}\left[e^{\sqrt{-1}\zeta(H_{(u,\bm{X})},\rho)}\right]=e^{\sqrt{-1}\zeta (u(\cdot;\bm{X}),\rho)-\frac{\zeta^{2}}{2}E(\rho)},\quad \rho\in W(\mbb{H}),\quad \zeta\in\mbb{R},
\end{equation*}
where we set
\begin{equation*}
	E(\rho):=\int_{\mbb{H}\times\mbb{H}}\rho(z)G_{\mbb{H}}(z,w)\rho(w)dzdw
\end{equation*}
for the Dirichlet energy of $\rho\in W(\mbb{H})$.

On the other hand, writing $(\mcal{F}_{t})_{t\ge 0}$ for the filtration associated with a $\mbb{P}^{X_{i}}$-Brownian motion $(B_{t}:t\ge 0)$,
we have
\begin{align*}
	\mbb{E}\left[e^{\sqrt{-1}\zeta(\mfrak{p}^{(i)}_{\bm{X},t},\rho)}\right]
	&=\mbb{E}\left[e^{\sqrt{-1}\zeta (\mfrak{h}^{(i)}_{\bm{X},t},\rho)}\mbb{E}\left[e^{\sqrt{-1}\zeta (H\circ g_{t},\rho)}\Big|\mcal{F}_{t}\right]\right] \\
	&=\mbb{E}\left[e^{\sqrt{-1}\zeta (\mfrak{h}^{(i)}_{\bm{X},t},\rho)-\frac{\zeta^{2}}{2}E_{t}(\rho)}\right],\quad \rho\in W(\mbb{H}),\quad \zeta\in\mbb{R},
\end{align*}
where we set
\begin{equation*}
	E_{t}(\rho)=\int_{\mbb{H}_{t}\times\mbb{H}_{t}}\rho(z)G_{t}(z,w)\rho(w)dzdw,\quad \rho\in W(\mbb{H}).
\end{equation*}
Here, we restrict the test function on $\mbb{H}_{t}$.
By assumption, we have $d\left[(\mfrak{h}^{(i)}_{\bm{X}},\rho)\right]_{t}=-dE_{t}(\rho)$, $t\ge 0$, which ensures that
$\left[(\mfrak{h}^{(i)}_{\bm{X}},\rho)\right]_{t}=-E_{t}(\rho)+E(\rho)$, $t\ge 0$.
This leads to
\begin{align*}
	\mbb{E}\left[e^{\sqrt{-1}\zeta(\mfrak{p}^{(i)}_{\bm{X},t},\rho)}\right]
	&=e^{-\frac{\zeta^{2}}{2}E(\rho)}\mbb{E}\left[e^{\sqrt{-1}\zeta (\mfrak{h}^{(i)}_{\bm{X},t},\rho)+\frac{\zeta^{2}}{2}[(\mfrak{h}^{(i)}_{\bm{X}},\rho)]_{t}}\right],
\end{align*}
where $\left(e^{\sqrt{-1}\zeta (\mfrak{h}^{(i)}_{\bm{X},t},\rho)+\frac{\zeta^{2}}{2}[(\mfrak{h}^{(i)}_{\bm{X}},\rho)]_{t}}:t\ge 0\right)$ is a martingale.
Therefore, we have
\begin{equation*}
	\mbb{E}\left[e^{\sqrt{-1}\zeta(\mfrak{p}^{(i)}_{\bm{X},t},\rho)}\right]=e^{\sqrt{-1}\zeta (u(\cdot;\bm{X}),\rho)-\frac{\zeta^{2}}{2}E(\rho)}
	=\mbb{E}\left[e^{\sqrt{-1}\zeta (H_{(u,\bm{X})},\rho)}\right],\quad \zeta\in\mbb{R},
\end{equation*}
which gives the desired result.
\end{proof}
This proposition admits an interpretation in terms of the flow line problem \cite{Sheffield2016,MillerSheffield2016a,KatoriKoshida2020a}.
Indeed, it says that the $i$-th curve is the flow line starting at $X_{i}$ 
along a random vector field generated by $H_{(u,\bm{X})}$.

The main result here is the following theorem.
\begin{thm}
\label{thm:coupling_forward}
Let $N\in\mbb{N}$, $0<\kappa\neq 4$ and $\mcal{Z}$ be an $(N,\kappa)$-partition function.
We also let $u=u(z;x_{1},\dots, x_{N})$ be a boundary perturbation for Dirichlet boundary GFF.
A $\mcal{Z}$-multiple SLE$(\kappa)$ is coupled with a $(u,\bm{X})$-perturbed Dirichlet boundary GFF $H_{(u,\bm{X})}$ with coupling constant $\chi > 0$
for arbitrary $\bm{X}\in\mrm{Conf}_{N}(\mbb{R})$ if and only if the following conditions are satisfied:
\begin{enumerate}
\item 	The $(N,\kappa)$-partition function is
		\begin{equation*}
			\mcal{Z}(x_{1},\dots,x_{N})=\prod_{1\le i<j\le N}|x_{i}-x_{j}|^{2/\kappa}
		\end{equation*}
		up to multiplication by nonzero constants.
\item 	Either
		\begin{enumerate}
		\item 	The parameters are related as $\chi=\frac{2}{\sqrt{\kappa}}-\frac{\sqrt{\kappa}}{2}$, $0<\kappa<4$.
		\item 	The boundary perturbation is given by
				\begin{equation*}
					u(z;x_{1},\dots, x_{N})=-\frac{2}{\sqrt{\kappa}}\sum_{i=1}^{N}\arg (z-x_{i})
				\end{equation*}
				up to addition of constants.
		\end{enumerate}
		or
		\begin{enumerate}
		\item 	The parameters are related as $\chi=-\frac{2}{\sqrt{\kappa}}+\frac{\sqrt{\kappa}}{2}$, $\kappa>4$.
		\item 	The boundary perturbation is given by
				\begin{equation*}
					u(z;x_{1},\dots, x_{N})=\frac{2}{\sqrt{\kappa}}\sum_{i=1}^{N}\arg (z-x_{i})
				\end{equation*}
				up to addition of constants.
		\end{enumerate}
		holds.
\end{enumerate}
\end{thm}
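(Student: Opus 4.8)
The plan is to mirror, step by step, the proof of Theorem~\ref{thm:coupling_constraint}, keeping careful track of the sign and regularity changes that accompany the passage from the backward/free-boundary setting to the forward/Dirichlet one: the Neumann Green's function $-\log|z-w|-\log|z-\bar w|$ is replaced by the Dirichlet one $-\log|z-w|+\log|z-\bar w|$, the term $Q\log|f_t'(z)|$ by $-\chi\arg g_t'(z)$, so that one works with the \emph{imaginary} part of a holomorphic function rather than the real part, and with the forward operators $\mcal{D}_i^{\kappa}$ where now $h_\kappa=\frac{6-\kappa}{2\kappa}$.

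First I would establish the forward analogue of Lemma~\ref{lem:coupling_condition}. A direct computation with the forward Loewner equation gives
\[
dG_t(z,w)=-\mrm{Im}\frac{2}{g_t(z)-W_t}\,\mrm{Im}\frac{2}{g_t(w)-W_t}\,dt,
\]
and since $\mfrak{h}^{(i)}_{\bm{X}}$ depends only on the Loewner flow, exactly the argument of the main text shows the coupling holds iff there is $\bm{\epsilon}=(\epsilon_i\in\{\pm1\})$ with $d\mfrak{h}^{(i)}_{\bm{X},t}(z)=\mrm{Im}\frac{2\epsilon_i}{g_t(z)-W_t}\,dB^{(i)}_{n,t}$ for every $n$. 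Next, write $u=\mrm{Im}\tilde u$ with $\tilde u$ holomorphic in $z$ (unique up to an additive constant), so that $\mfrak{h}^{(i)}_{\bm{X},t}=\mrm{Im}\tilde{\mfrak{h}}^{(i)}_{\bm{X},t}$ with $\tilde{\mfrak{h}}^{(i)}_{\bm{X},t}(z)=\tilde u(g_t(z);\ldots)-\chi\log g_t'(z)$. By the Girsanov change defining $\mbb{Q}^{(i)}_{\bm{X},n}$, the process $\tilde{\mfrak{h}}^{(i)}_{\bm{X},t}$ is a $\mbb{Q}^{(i)}_{\bm{X},n}$-local martingale iff $N^{(i)}_{\bm{X},t}:=\tilde{\mfrak{h}}^{(i)}_{\bm{X},t}M^{(i)}_{\bm{X},t}$ is a $\mbb{P}^{X_i}$-local martingale. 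Setting $\mcal{X}=\tilde u\mcal{Z}$ and applying It{\^o}'s formula to the explicit expression for $N^{(i)}_{\bm{X},t}$ (using $\tfrac{d}{dt}g_t(z)=\tfrac{2}{g_t(z)-W_t}$, $\tfrac{d}{dt}\log g_t'(z)=\tfrac{-2}{(g_t(z)-W_t)^2}$ and $\mcal{D}_i^{\kappa}\mcal{Z}=0$), the $dt$-part is seen to vanish exactly when
\[
(\mcal{D}_{z,i}^{\kappa}\mcal{X})(z;x_1,\dots,x_N)+\frac{2\chi}{(z-x_i)^2}\mcal{Z}(x_1,\dots,x_N)=0,\qquad i=1,\dots,N,
\]
with $\mcal{D}_{z,i}^{\kappa}=\mcal{D}_i^{\kappa}+\frac{2}{z-x_i}\del_z$. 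Computing the martingale part and the cancellation $\tfrac{\alpha^{(i)}_{\bm{X},t}(z)}{M^{(i)}_{\bm{X},t}}-\tilde{\mfrak{h}}^{(i)}_{\bm{X},t}(z)s^{(i)}_{\bm{X},t}=\sqrt{\kappa}(\del_{x_i}\tilde u)(g_t(z);\ldots)$ yields $d\tilde{\mfrak{h}}^{(i)}_{\bm{X},t}(z)=\sqrt{\kappa}(\del_{x_i}\tilde u)(g_t(z);\ldots)\,dB^{(i)}_{n,t}$; comparison with the forward Lemma forces $(\del_{x_i}\tilde u)(z;\bm{x})=\frac{2\epsilon_i/\sqrt{\kappa}}{z-x_i}$, hence $\tilde u(z;\bm{x})=-\frac{2}{\sqrt{\kappa}}\sum_i\epsilon_i\log(z-x_i)+\mathrm{const}$, the integration constant being killed by translation and scale invariance of $u$.

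Finally, I would substitute $\mcal{X}=\tilde u\mcal{Z}$ with this $\tilde u$ into the displayed PDE and expand $\mcal{D}_{z,i}^{\kappa}(\tilde u\mcal{Z})$. Since $\mcal{D}_i^{\kappa}\mcal{Z}=0$, all terms proportional to $\tilde u$ drop out, and after simplifying $\tfrac{1}{x_j-x_i}-\tfrac{1}{z-x_i}=\tfrac{z-x_j}{(x_j-x_i)(z-x_i)}$ one is left with an identity in $z$ whose only singularities are poles of order $\le2$ at $z=x_i$:
\[
\bigl[(\sqrt{\kappa}-4/\sqrt{\kappa})\epsilon_i+2\chi\bigr]\frac{\mcal{Z}}{(z-x_i)^2}+\frac{2\sqrt{\kappa}\epsilon_i}{z-x_i}\del_{x_i}\mcal{Z}+\frac{4/\sqrt{\kappa}}{z-x_i}\sum_{j;j\neq i}\frac{\epsilon_j}{x_j-x_i}\mcal{Z}=0.
\]
Matching the two pole orders separately gives (i) $\chi=(2/\sqrt{\kappa}-\sqrt{\kappa}/2)\epsilon_i$ for every $i$ and (ii) $\del_{x_i}\log\mcal{Z}=\frac{2}{\kappa}\sum_{j\neq i}\frac{\epsilon_i\epsilon_j}{x_i-x_j}$. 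Condition (i) with $\chi>0$ excludes $\kappa=4$ and forces $\epsilon_i=+1$ for all $i$ when $0<\kappa<4$ (whence $\chi=2/\sqrt{\kappa}-\sqrt{\kappa}/2$) and $\epsilon_i=-1$ for all $i$ when $\kappa>4$ (whence $\chi=-2/\sqrt{\kappa}+\sqrt{\kappa}/2$); in either case $\epsilon_i\epsilon_j\equiv1$, so (ii) integrates to $\mcal{Z}=\prod_{i<j}|x_i-x_j|^{2/\kappa}$ up to a constant. That this function is a genuine $(N,\kappa)$-partition function (it solves $\mcal{D}_i^{\kappa}\mcal{Z}=0$, is analytic, admits the Frobenius expansion) is verified directly, and uniqueness up to a constant follows because it realizes the exponent $2/\kappa$ at every pair $\{i,j\}$, which pins the solution down as in the discussion preceding Definition~\ref{defn:N_kappa_partition_function}, adapted to the forward operators. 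Reading off $u=\mrm{Im}\tilde u$ then gives the two stated forms of the boundary perturbation. The converse is obtained by running the computations backwards: for the stated $\mcal{Z}$, $u$, $\chi$ the PDE above holds, hence $N^{(i)}$ is a $\mbb{P}^{X_i}$-local martingale, hence $\tilde{\mfrak{h}}^{(i)}$ is a $\mbb{Q}^{(i)}_{\bm{X},n}$-local martingale with the required cross variation.

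I expect the main obstacle to be purely computational rather than conceptual: carrying out the It{\^o} expansion of $N^{(i)}_{\bm{X},t}$ cleanly enough to extract the defining PDE for $\mcal{X}$, and then — after inserting the explicit $\tilde u$ — organizing the partial-fraction algebra so that all $z$-dependence collapses onto the poles at $z=x_i$, since it is precisely the vanishing of those two residues that simultaneously yields the parameter relation for $\chi$ and the first-order ODE system that determines $\mcal{Z}$.
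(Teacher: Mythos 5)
Your proposal is correct and follows essentially the same route as the paper's own proof: harmonic conjugate $\tilde u$, reduction of the $\mbb{Q}^{(i)}_{\bm{X},n}$-martingale property to the $\mbb{P}^{X_i}$-martingale property of $N^{(i)}=\tilde{\mfrak{h}}^{(i)}M^{(i)}$, the PDE for $\mcal{X}=\tilde u\mcal{Z}$, identification of $\del_{x_i}\tilde u$ via the cross-variation lemma, and extraction of the parameter relation and the first-order system for $\mcal{Z}$ from the two pole orders at $z=x_i$. The only (harmless) deviation is that you integrate the resulting system $\del_{x_i}\log\mcal{Z}=\frac{2}{\kappa}\sum_{j\neq i}(x_i-x_j)^{-1}$ directly, whereas the paper passes through the asymptotics $\mcal{Z}\sim(x_i-x_j)^{2/\kappa}$ and the uniqueness theory for regular singular points; both close the argument.
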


Before proving Theorem \ref{thm:coupling_forward}, we note the following fact.
\begin{lem}
\label{lem:coupling_condition_forward}
A $\mcal{Z}$-multiple SLE$(\kappa)$ is coupled with $H_{(u,\bm{X})}$ with coupling constant $\chi$
if and only if there exists a sequence $\bm{\epsilon}=(\epsilon_{i}\in\{\pm 1\})$ such that
the increment of $\left(\mfrak{h}^{(i)}_{\bm{X},t}:t\ge 0\right)$ becomes
\begin{equation*}
	d\mfrak{h}^{(i)}_{\bm{X},t}(z)=\mrm{Im}\frac{2\epsilon_{i}}{g_{t}(z)-W_{t}}dB^{(i)}_{n,t},\ \ z\in\mbb{H},\ \ t\ge 0
\end{equation*}
for every $i\in\{1,\dots, N\}$ and $n\in\mbb{N}$,
where $(B^{(i)}_{n,t}:t\ge 0)$ is a $\mbb{Q}^{(i)}_{\bm{X},n}$-Brownian motion defined by (\ref{eq:transformed_BM_forward}).
\end{lem}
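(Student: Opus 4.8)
The plan is to transcribe the proof of Lemma~\ref{lem:coupling_condition} to the forward, Dirichlet-boundary setting; essentially only one computation has to be redone, namely the time evolution of the process $\left(G_{t}(z,w):t\ge 0\right)$. Since, for fixed $z$, the map $t\mapsto g_{t}(z)$ solves the pathwise ordinary differential equation $\frac{d}{dt}g_{t}(z)=\frac{2}{g_{t}(z)-W_{t}}$, the process $G_{t}(z,w)=G_{\mbb{H}}(g_{t}(z),g_{t}(w))$ with $G_{\mbb{H}}(z,w)=-\log|z-w|+\log|z-\overline{w}|$ is absolutely continuous in $t$, and differentiating through the chain rule (using that $W_{t}$ is real, so that $\overline{\dot g_{t}(w)}=2/(\overline{g_{t}(w)}-W_{t})$) yields
\begin{equation*}
	dG_{t}(z,w)=-\mrm{Im}\frac{2}{g_{t}(z)-W_{t}}\,\mrm{Im}\frac{2}{g_{t}(w)-W_{t}}\,dt,\quad t\ge 0,\quad z,w\in\mbb{H}_{t}.
\end{equation*}
This is the Dirichlet counterpart of the identity established at the start of the proof of Lemma~\ref{lem:coupling_condition}; the appearance of $\mrm{Im}$ in place of $\mrm{Re}$ reflects the sign difference between the Neumann and Dirichlet Green's functions, and is why the increment in the present lemma involves $\mrm{Im}$. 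I would also record the elementary fact that, for $z\in\mbb{H}_{t}$, one has $\mrm{Im}\frac{2}{g_{t}(z)-W_{t}}<0$, since $\mrm{Im}(g_{t}(z)-W_{t})=\mrm{Im}g_{t}(z)>0$; in particular this kernel never vanishes, which makes a ratio appearing below unambiguous.

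For the ``if'' direction, assume the increment of $\left(\mfrak{h}^{(i)}_{\bm{X},t}:t\ge 0\right)$ is as in the statement. Then, for each $i$ and $n$, it is a stochastic integral against the $\mbb{Q}^{(i)}_{\bm{X},n}$-Brownian motion $\left(B^{(i)}_{n,t}:t\ge 0\right)$ with no drift term, hence a $\mbb{Q}^{(i)}_{\bm{X},n}$-local martingale, and, since $\epsilon_{i}^{2}=1$,
\begin{equation*}
	d\left[\mfrak{h}^{(i)}_{\bm{X}}(z),\mfrak{h}^{(i)}_{\bm{X}}(w)\right]_{t}=\mrm{Im}\frac{2\epsilon_{i}}{g_{t}(z)-W_{t}}\,\mrm{Im}\frac{2\epsilon_{i}}{g_{t}(w)-W_{t}}\,dt=-dG_{t}(z,w),
\end{equation*}
so the coupling holds.

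For the converse, suppose the coupling holds and fix $i$ and $n$. Under $\mbb{Q}^{(i)}_{\bm{X},n}$ all randomness is carried by $\left(B^{(i)}_{n,t}:t\ge 0\right)$, so the assumed local martingale $\left(\mfrak{h}^{(i)}_{\bm{X},t}(z):t\ge 0\right)$ admits a representation $d\mfrak{h}^{(i)}_{\bm{X},t}(z)=F^{(i)}_{n,t}(z)\,dB^{(i)}_{n,t}$ for some process $\left(F^{(i)}_{n,t}(z):t\ge 0\right)$ depending on $z\in\mbb{H}$; the prescribed cross variation then forces
\begin{equation*}
	F^{(i)}_{n,t}(z)F^{(i)}_{n,t}(w)=\mrm{Im}\frac{2}{g_{t}(z)-W_{t}}\,\mrm{Im}\frac{2}{g_{t}(w)-W_{t}},\quad z,w\in\mbb{H}_{t}.
\end{equation*}
Because the kernel does not vanish, the ratio $U^{(i)}_{n,t}:=\frac{F^{(i)}_{n,t}(z)}{\mrm{Im}\frac{2}{g_{t}(z)-W_{t}}}$ is independent of $z$ and satisfies $(U^{(i)}_{n,t})^{2}=1$, hence, by continuity in $t$, equals a constant $\epsilon_{i}\in\{\pm1\}$. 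Since the measures $\mbb{Q}^{(i)}_{\bm{X},n}$ agree up to $\tau^{(i)}_{\bm{X},n}$ as $n$ varies, $\epsilon_{i}$ does not depend on $n$, which produces the sequence $\bm{\epsilon}=(\epsilon_{i}:i=1,\dots,N)$ and the claimed increment formula. The only genuinely computational step is the evolution equation for $G_{t}(z,w)$ above; once it is in hand the argument is a direct repetition of the proof of Lemma~\ref{lem:coupling_condition}, and I expect no real obstacle beyond keeping the conjugations straight in that first calculation.
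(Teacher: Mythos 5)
Your proof is correct and follows essentially the same route as the paper's: the paper's own proof of this lemma consists precisely of the computation $dG_{t}(z,w)=-\mrm{Im}\frac{2}{g_{t}(z)-W_{t}}\mrm{Im}\frac{2}{g_{t}(w)-W_{t}}dt$ followed by the remark that the assertion is then immediate, the implicit reference being to the factorization argument $F^{(i)}_{n,t}(z)F^{(i)}_{n,t}(w)=\mrm{Im}\frac{2}{g_{t}(z)-W_{t}}\mrm{Im}\frac{2}{g_{t}(w)-W_{t}}$ from the proof of Lemma \ref{lem:coupling_condition}, which you reproduce faithfully. Your additional observation that $\mrm{Im}\frac{2}{g_{t}(z)-W_{t}}$ never vanishes on $\mbb{H}_{t}$ is a welcome refinement the paper leaves implicit (and which is actually cleaner here than in the backward case, where the real part can vanish).
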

\begin{proof}
Note that we have
\begin{equation*}
	G_{t}(z,w)=-\log |g_{t}(z)-g_{t}(w)|+\log |g_{t}(z)-\overline{g_{t}(w)}|,\quad z,w\in\mbb{H}_{t},\quad z\neq w,\quad t\ge 0.
\end{equation*}
The assertion immediately follows from the fact that
\begin{equation*}
	dG_{t}(z,w)=-\mrm{Im}\frac{2}{g_{t}(z)-W_{t}}\mrm{Im}\frac{2}{g_{t}(w)-W_{t}}dt,\quad z,w\in\mbb{H}_{t},\quad t\ge 0
\end{equation*}
holds.
\end{proof}

\begin{proof}[Proof of Theorem \ref{thm:coupling_forward}]
Let $\tilde{u}=\tilde{u}(z;x_{1},\dots, x_{N})$, $z\in\mbb{H}$, $(x_{1},\dots, x_{N})\in\mrm{Conf}_{N}(\mbb{R})$ be a holomorphic function in $z$ so that
\begin{equation*}
	u(z;x_{1},\dots, x_{N})=\mrm{Im}\tilde{u}(z;x_{1},\dots, x_{N}),\quad z\in\mbb{H},\quad (x_{1},\dots, x_{N})\in\mrm{Conf}_{N}(\mbb{R}).
\end{equation*}
Such a function is determined uniquely up to addition of constants.
Then, for $\bm{X}\in\mrm{Conf}_{N}(\mbb{R})$ and $i\in\{1,\dots, N\}$, the stochastic process $\left(\mfrak{h}^{(i)}_{\bm{X},t}:t\ge 0\right)$
is the imaginary part of
\begin{equation*}
	\tilde{\mfrak{h}}^{(i)}_{\bm{X},t}(z)=\tilde{u}\left(g_{t}(z);X^{(1)}_{t},\dots,\overset{i}{\check{W}_{t}},\dots, X^{(N)}_{t}\right)-\chi \log \pr{g}_{t}(z),\quad z\in\mbb{H},\\quad t\ge 0.
\end{equation*}
We set $\left(N^{(i)}_{\bm{X},t}=\tilde{\mfrak{h}}^{(i)}_{\bm{X},t}M^{(i)}_{\bm{X},t}:t\ge 0\right)$, $\bm{X}\in\mrm{Conf}_{N}(\mbb{R})$, $i\in\{1,\dots, N\}$.
Then, the stochastic process $\left(\tilde{\mfrak{h}}^{(i)}_{\bm{X},t}:t\ge 0\right)$ is a $\mbb{Q}^{(i)}_{\bm{X},n}$-local martingale
if and only if $\left(N^{(i)}_{\bm{X},t}:t\ge 0\right)$ is a $\mbb{P}^{X_{i}}$-local martingale.
For convenience, we set
\begin{equation*}
	\mcal{X}(z;x_{1},\dots, x_{N})=\tilde{u}(z;x_{1},\dots, x_{N})\mcal{Z}(x_{1},\dots, x_{N}),\quad z\in\mbb{H},\quad (x_{1},\dots, x_{N})\in\mrm{Conf}_{N}(\mbb{R}).
\end{equation*}
By direct computation, the increment of $\left(N^{(i)}_{\bm{X},t}:t\ge 0\right)$ is given by
\begin{align*}
	dN^{(i)}_{\bm{X},t}(z)
	=&\prod_{j;j\neq i}\pr{g}_{t}(X_{j})^{h_{\kappa}}\Biggl[(\mcal{D}_{z,i}^{\kappa}\mcal{X})\left(g_{t}(z);X^{(1)}_{t},\dots, \overset{i}{\check{W}_{t}},\dots, X^{(N)}_{t}\right)\\
	&\hspace{70pt}+\frac{2\chi}{(g_{t}(z)-W_{t})^{2}}\mcal{Z}\left(X^{(1)}_{t},\dots,\overset{i}{\check{W}_{t}},\dots, X^{(N)}_{t}\right)\Biggr]dt \\
	&+\Biggl[\sqrt{\kappa}\prod_{j;j\neq i}\pr{g}_{t}(X_{j})^{h_{\kappa}}(\del_{x_{j}}\mcal{X})\left(g_{t}(z),X^{(1)}_{t},\dots, \overset{i}{\check{W}_{t}},\dots, X^{(N)}_{t}\right)\\
	&\hspace{20pt}-\chi\log \pr{g}_{t}(z)s^{(i)}_{\bm{X},t}M^{(i)}_{\bm{X},t}\Biggr] dB_{t},\quad t\ge 0,\quad z\in\mbb{H},
\end{align*}
where
\begin{equation*}
	\mcal{D}_{z,i}^{\kappa}:=\frac{\kappa}{2}\del_{x_{i}}^{2}+2\sum_{j;j\neq i}\left(\frac{1}{x_{j}-x_{i}}\del_{x_{j}}-\frac{h_{\kappa}}{(x_{j}-x_{i})^{2}}\right)+\frac{2}{z-x_{i}}\del_{z},\quad i=1,\dots, N.
\end{equation*}
Requiring that $\left(N^{(i)}_{\bm{X},t}:t\ge 0\right)$ is a $\mbb{P}^{X_{i}}$-local martingale for every $i\in \{1,\dots, N\}$
and an arbitrary initial condition $\bm{X}\in\mrm{Conf}_{N}(\mbb{R})$,
we see that the differential equations
\begin{align}
\label{eq:differential_equation_X_forward}
	(\mcal{D}^{\kappa}_{z,i}\mcal{X})(z;x_{1},\dots, x_{N})+\frac{2\chi}{(z-x_{i})^{2}}\mcal{Z}(x_{1},\dots, x_{N})=0,& \\
	z\in\mbb{H},\quad (x_{1},\dots, x_{N})\in\mrm{Conf}_{N}(\mbb{R}),\quad i=1,\dots, N& \notag
\end{align}
have to be satisfied.

Assuming (\ref{eq:differential_equation_X_forward}), we compute the increment of $\left(\tilde{\mfrak{h}}^{(i)}_{\bm{X},t}:t\ge 0\right)$ to obtain
\begin{equation*}
	d\tilde{\mfrak{h}}_{\bm{X},t}(z)=\sqrt{\kappa}(\del_{x_{i}}\tilde{u})\left(g_{t}(z);X^{(1)}_{t},\dots,\overset{i}{\check{W}_{t}},\dots, X^{(N)}_{t}\right)dB^{(i)}_{n,t},\quad z\in\mbb{H}_{t},\quad t\ge 0,
\end{equation*}
where $(B^{(i)}_{n,t}:t\ge 0)$ is a $\mbb{Q}^{(i)}_{\bm{X},n}$-Brownian motion defined by (\ref{eq:transformed_BM_forward}).
By Lemma \ref{lem:coupling_condition_forward}, there exists a sequence $\bm{\epsilon}=(\epsilon_{i}\in\{\pm 1\})$ so that we can require
\begin{equation*}
	(\del_{x_{i}}\tilde{u})(z;x_{1},\dots, x_{N})=\frac{2\epsilon_{i}/\sqrt{\kappa}}{z-x_{i}},\quad i=1,\dots, N.
\end{equation*}
They are solved by
\begin{equation*}
	\tilde{u}(z;x_{1},\dots, x_{N})=-\frac{2}{\sqrt{\kappa}}\sum_{i=1}^{N}\epsilon_{i}\log (z-x_{i})+h(z)
\end{equation*}
with $h(z)$ being a holomorphic function only of $z$.
For $u=\mrm{Im}\tilde{u}$ to be translation invariant, $h(z)$ must be a constant.

We again require $\mcal{X}=\tilde{u}\mcal{Z}$ with $\tilde{u}$ given above to solve (\ref{eq:differential_equation_X_forward}).
We have
\begin{align*}
	(\mcal{D}^{\kappa}_{z,i}\mcal{X})(z;x_{1},\dots, x_{N})
	&=\left(\frac{(\sqrt{\kappa}-4/\sqrt{\kappa})\epsilon_{i}}{(z-x_{i})^{2}}-\frac{4/\sqrt{\kappa}}{z-x_{i}}\sum_{j;j\neq i}\frac{\epsilon_{j}}{x_{i}-x_{j}}\right)\mcal{Z}(x_{1},\dots, x_{N}) \\
	&\hspace{20pt}+\frac{2\sqrt{\kappa}\epsilon_{i}}{z-x_{i}}(\del_{x_{i}}\mcal{Z})(x_{1},\dots, x_{N}).
\end{align*}
Therefore, either of the followings has to occur:
\begin{enumerate}
\item 	$\chi=\frac{2}{\sqrt{\kappa}}-\frac{\sqrt{\kappa}}{2}$ with $0<\kappa<4$ and $\epsilon_{i}=1$, $i=1,\dots, N$.
		In this case, we also have
		\begin{equation*}
			\tilde{u}(z;x_{1},\dots, x_{N})=-\frac{2}{\sqrt{\kappa}}\sum_{i=1}^{N}\log (z-x_{i})
		\end{equation*}
		up to additive constants.
\item 	$\chi=-\frac{2}{\sqrt{\kappa}}+\frac{\sqrt{\kappa}}{2}$ with $\kappa>4$ and $\epsilon_{i}=-1$, $i=1,\dots, N$.
		In this case, we also have
		\begin{equation*}
			\tilde{u}(z;x_{1},\dots, x_{N})=\frac{2}{\sqrt{\kappa}}\sum_{i=1}^{N}\log (z-x_{i})
		\end{equation*}
		up to additive constants.
\end{enumerate}
In both cases, the partition function $\mcal{Z}$ is subject to additional conditions
\begin{equation*}
	(\del_{x_{i}}\mcal{Z})(x_{1},\dots, x_{N})=\sum_{j;j\neq i}\frac{2/\kappa}{x_{i}-x_{j}}\mcal{Z}(x_{1},\dots, x_{N}),\quad i=1,\dots, N.
\end{equation*}
This implies that the partition function has asymptotic behavior
\begin{equation}
\label{eq:asymptitics_partition_function_forward}
	\mcal{Z}(x_{1},\dots, x_{N})\sim (x_{i}-x_{j})^{2/\kappa},\quad x_{i}\downarrow x_{j}
\end{equation}
for every pair $\{i,j\}\subset \{1,\dots, N\}$.
We can check that, in the asymptotic behavior of the $(N,\kappa)$-partition function
\begin{equation*}
	\mcal{Z}(x_{1},\dots, x_{N})\sim (x_{i}-x_{j})^{\Delta},\ \ x_{i}\downarrow x_{i},
\end{equation*}
the exponent $\Delta$ can be either $2/\kappa$ or $(6-\kappa)/\kappa$, which are distinct if $\kappa\neq 4$.
Therefore, following the general theory of partial differential equations with regular singular points \cite[Appendix B]{Knapp1986}, if $\kappa\neq 4$, the asymptotic behaviors (\ref{eq:asymptitics_partition_function_forward}) are
sufficient to fix the $(N,\kappa)$-partition function as
\begin{equation*}
	\mcal{Z}(x_{1},\dots, x_{N})=\prod_{1\le i<j\le N}|x_{i}-x_{j}|^{2/\kappa}
\end{equation*}
up to multiplicative constants,
which is certainly an $(N,\kappa)$-partition function.
\end{proof}

\begin{rem}
As we anticipated above, when $\kappa=4$, the requirement of asymptotic behaviors cannot fix a partition function because two possible exponents coincide.
Indeed, coupling with a multiple SLE$(4)$ and GFF was considered for any partition function \cite{PeltolaWu2019} in connection to the level lines of a GFF.
\end{rem}

\addcontentsline{toc}{chapter}{Bibliography}
\bibliographystyle{alpha}
\bibliography{sle_gff}

\end{document}